\documentclass[11pt,letterpaper]{article}
\usepackage[margin=1in]{geometry}
\usepackage[T1]{fontenc}
\usepackage{microtype}
\usepackage{graphicx}
\usepackage{amsmath,amssymb,amsfonts,bm}
\usepackage{amsthm}
\usepackage{xcolor}
\usepackage{array,multirow,makecell,booktabs}
\usepackage{enumitem}
\usepackage{placeins}
\usepackage{algorithm,algpseudocode}
\usepackage{comment}
\usepackage[authoryear,round]{natbib}
\usepackage[font=small,labelfont=bf]{caption}
\usepackage{etoolbox}
\AtBeginEnvironment{table}{\small}
\usepackage[hidelinks]{hyperref}
\usepackage[nameinlink,noabbrev]{cleveref}
\hypersetup{hidelinks,hypertexnames=false,bookmarksdepth=2}

\theoremstyle{plain}
\newtheorem{thm}{Theorem}
\newtheorem{lem}{Lemma}
\newtheorem{prop}{Proposition}
\newtheorem{cor}{Corollary}
\theoremstyle{definition}
\newtheorem{defn}{Definition}
\newtheorem{assumption}{Assumption}
\theoremstyle{remark}
\newtheorem{rem}{Remark}
\numberwithin{equation}{section}
\crefname{thm}{theorem}{theorems}
\Crefname{thm}{Theorem}{Theorems}

\crefname{lem}{lemma}{lemmas}
\Crefname{lem}{Lemma}{Lemmas}

\crefname{prop}{proposition}{propositions}
\Crefname{prop}{Proposition}{Propositions}

\crefname{cor}{corollary}{corollaries}
\Crefname{cor}{Corollary}{Corollaries}

\crefname{defn}{definition}{definitions}
\Crefname{defn}{Definition}{Definitions}

\crefname{assumption}{assumption}{assumptions}
\Crefname{assumption}{Assumption}{Assumptions}

\crefname{rem}{remark}{remarks}
\Crefname{rem}{Remark}{Remarks}

\crefname{equation}{equation}{equations}
\Crefname{equation}{Equation}{Equations}

\crefname{section}{section}{sections}
\Crefname{section}{Section}{Sections}

\crefname{figure}{figure}{figures}
\Crefname{figure}{Figure}{Figures}

\crefname{table}{table}{tables}
\Crefname{table}{Table}{Tables}

\crefname{algorithm}{algorithm}{algorithms}
\Crefname{algorithm}{Algorithm}{Algorithms}

\newcommand{\R}{\mathbb{R}}

\newcommand{\cA}{\mathcal{A}}

\newcommand{\de}{\mathrm{d}}
\DeclareMathOperator*{\argmin}{arg\,min}
\newcommand{\SweepN}{2400}
\newcommand{\SweepZero}{78.5}
\newcommand{\SweepAboveOne}{10.9}
\newcommand{\SweepAboveTen}{2.9}
\newcommand{\SweepMax}{26.3}
\newcommand{\SweepMaxAbs}{0.83}

\begin{document}

\title{Time-optimal vaccination in state-dependent SIR models:
a single switch criterion and numerical examples outside its regime}

\author{Yeachan Park$^{1}$\thanks{Yeachan Park and Jiwoong Jang contributed equally to this work.}
\quad Jiwoong Jang$^{2}$\footnotemark[1]
\quad Yeoneung Kim$^{3}$\thanks{Corresponding author: \href{mailto:yeoneung@yonsei.ac.kr}{\texttt{yeoneung@yonsei.ac.kr}}.}\\[0.7em]
\small $^{1}$Department of Mathematics, Sejong University, Seoul, Republic of Korea\\
\small $^{2}$Department of Mathematics, University of Maryland-College Park,\\
\small William E. Kirwan Hall, 4176 Campus Drive, College Park, Maryland 20742, USA\\
\small $^{3}$Department of Industrial Engineering, Yonsei University, Seoul, Republic of Korea}
\date{}
\maketitle

\begin{abstract}
In a susceptible, infected, and recovered (SIR) model with constant rates, a
time-optimal vaccination policy has at most one switch from no vaccination to
maximal vaccination.  We allow the transmission and recovery rates
$\beta(S,I)$ and $\gamma(S,I)$ to depend on the state.  If
$\beta+I\partial_I\beta>0$ and
$G:=\partial_S(\beta S-\gamma)>0$ throughout the forward invariant state
space, then every optimal control is bang-bang with at most one switch.
The incidence condition allows the transmission coefficient to decrease with
prevalence.  The Pontryagin analysis shows that $G$, the sensitivity of per
capita infected growth to the
susceptible fraction, and the infected costate together determine the
crossing direction of the switching function.  We also prove existence of an
optimal control, derive explicit a priori upper bounds on the eradication
time and characterize the switching time by an optimization over one
variable.  For parameter sets that violate $G>0$, numerical optimization
gives controls with two switches and smaller eradication times than the
computed minima over the single switch family.  These computations provide a
reproducible numerical counterexample to an unrestricted single switch claim.
In a prescribed
parameter sweep satisfying $G>0$, delayed vaccination reduces the eradication
time relative to immediate full vaccination by more than $1\%$ in
$\SweepAboveOne\%$ of the cases, with a maximum reduction of
$\SweepMax\%$.  In examples near this maximum, the delay increases peak prevalence
by more than an order of magnitude and cumulative incidence more than
fourfold.  Hence minimizing eradication time does not in general minimize
epidemiological burden.
\end{abstract}

\noindent\textbf{Keywords:} time-optimal vaccination, epidemic models, state-dependent SIR model, bang-bang control, Pontryagin maximum principle, dynamic programming

\medskip
\noindent\textbf{Mathematics Subject Classification (2020):} 49K15, 49L20, 92D30

\section{Introduction}\label{sec:intro}

Susceptible, infected, and recovered (SIR) models are widely used to study
epidemic dynamics and intervention strategies.  Since the work of
\citet{KermackMcKendrick1927}, these models have been analyzed extensively
because their compartments have direct interpretations and the governing
equations are mathematically tractable
\citep{AndersonMay1991,DiekmannHeesterbeek2000,Hethcote2000,Martcheva2015}.
Control variables can represent vaccination, treatment, or behavioral
interventions.  Applications of optimal control to epidemic models are
reviewed by \citet{Wickwire1977} and \citet{LenhartWorkman2007}.
Early studies obtained bang-bang isolation and immunisation policies for stochastic and deterministic epidemic models \citep{Abakuks1973,Abakuks1974,MortonWickwire1974}, and \citet{Behncke2000} provided a general analysis of deterministic epidemic control.
More recent work has addressed vaccination and treatment strategies
\citep{Zaman2008,GaffSchaefer2009}, singular arcs in SIR models with combined
controls \citep{LedzewiczSchattler2011}, control under limited resources
\citep{HansenDay2011,BolzoniLimitedResources2019}, and behaviourally coupled
vaccination programmes \citep{BuonomoDellaMarca2019,Buonomo2019}.
Healthcare capacity has been incorporated through costs for exceeding
available medical resources \citep{Ketcheson2021} or explicit ICU constraints
\citep{Miclo2022,DellaRossa2025}.

We consider the \emph{minimum eradication time} criterion, in which the
infected population is required to reach a prescribed threshold in minimum
time.  This criterion gives an optimal control problem with an exit time and
a free terminal time.
Such problems can be studied with Pontryagin's Maximum Principle
\citep{PontryaginEtAl1962,LeeMarkus1967,SchattlerLedzewicz2012} and dynamic programming
\citep{BardiCapuzzoDolcetta1997,FlemingSoner2006}.

Most analyses of eradication time in SIR models assume constant transmission
and recovery rates.
For the constant rate vaccination model, \citet{BolzoniTimeOptimal2017}
establish that a time-optimal policy has at most one switch from no
vaccination to maximal vaccination.
\citet{HyndEradication2021} develop the dynamic programming analysis and
characterize the value function as the unique continuous viscosity solution
of the associated HJB equation under their boundary convention.  They derive a scalar
switching time representation, an earliest value matching characterization,
and a derivative criterion for the switch.  For the uncontrolled SIR model,
\citet{HyndCriticalTimes2022} analyze the time at which prevalence begins to
decrease and the time at which it reaches a prescribed threshold.
Related bang-bang results hold for an SIR model with an Erlang distributed
infectious period.  \citet{BolzoniDellaMarca2025} split the infected
compartment into stages and minimize epidemic burden under limited
vaccination resources or linear vaccination costs.  They prove that optimal
controls have at most one switch, from maximal vaccination to no vaccination
when a switch occurs.  Their objective and switching direction thus differ
from the minimum time problem considered here.  Their rates remain constant,
whereas our model has two state variables and rates that depend on state.
The latter dependence introduces additional nonlinear terms into the adjoint
system.  Related work by \citet{JangKim2025} treats minimum eradication time
for an SIR model with coefficients that depend on time and establishes a
Hamilton--Jacobi characterization.  Coefficients that depend on state instead
enter the costate equations through the state trajectory.
The Hamilton--Jacobi equation for the constant rate model has also been solved
with physics informed neural networks
\citep{RaissiPerdikarisKarniadakis2019,KimKimKim2025}.  This gives an
approximation of the value function and the induced vaccination policy.

Constant transmission and recovery rates are restrictive for some epidemic
models.
Behavioral responses, saturation, and heterogeneous contacts can make
transmission depend on state.  Healthcare capacity and treatment availability
can similarly affect recovery.  Nonlinear incidence has been studied since
\citet{CapassoSerio1978}; see also
\citet{LiuHethcoteLevin1987,RuanWang2003,XiaoRuan2007}.
Contact and vaccination decisions that depend on prevalence are surveyed by
\citet{FunkSalatheJansen2010,ManfrediDOnofrio2013} and analysed by
\citet{dOnofrioManfrediSalinelli2007,Weitz2020}.  A removal rate limited by
hospital capacity can also change the bifurcation structure of the SIR model
\citep{ShanZhu2014}.
These mechanisms motivate controlled SIR models in which transmission and
recovery depend on state.  This dependence introduces additional
terms into the adjoint system, and the constant rate result does
not extend without further conditions.  The relevant quantity is
$G=\partial_S(\beta S-\gamma)$, the sensitivity of the instantaneous per
capita infected growth rate to the susceptible fraction.  We prove the single
switch result under $G>0$ and $\beta+I\partial_I\beta>0$.  The latter condition
requires incidence to increase with prevalence at fixed $S$, while allowing
the transmission coefficient itself to decrease.  Numerical examples
with $G<0$ over part of the state space yield controls with two switches and
provide reproducible numerical counterexamples to the unrestricted single
switch conclusion.

\medskip
\noindent\textbf{Contributions.}
The main contributions are as follows.

\begin{itemize}[leftmargin=16pt]
\item We prove that, under $\beta+I\partial_I\beta>0$ and
$G=\partial_S(\beta S-\gamma)>0$, every time-optimal vaccination policy is
bang-bang with
\emph{at most one} switching time, from no vaccination to full vaccination.
The proof uses Pontryagin's Maximum Principle and an
analysis of the switching function.  The crossing
direction of the switching function is governed by the product $\lambda_I G$,
where $G:=\partial_S(\beta S-\gamma)$ is the sensitivity of the per capita infected
growth rate to the susceptible fraction.  The terms that depend on prevalence,
$\partial_I\beta$ and $\partial_I\gamma$,
enter only through the adjoint dynamics of the infected costate, whose sign
structure follows from the terminal conditions and the incidence condition.
An analytic example includes transmission that decreases with prevalence.

\item For admissible parameters violating $G>0$, numerical optimization gives
a control that starts vaccination, pauses it, and resumes it.  Its computed
eradication time is smaller than the computed minimum over the $0\to1$
single switch family.

\item A parameter sweep quantifies the improvement over immediate full
vaccination.  The reduction exceeds $1\%$ in $\SweepAboveOne\%$ of the parameter sets
and does not exceed $\SweepMax\%$.  Immediate full vaccination is the computed
optimum in $\SweepZero\%$ of the sets.  In the cases with the largest reduction,
delay increases peak prevalence by more than an order of magnitude and
cumulative incidence more than fourfold.  These calculations quantify, for
the selected nonlinear families, the conflict between duration and burden
identified by \citet{BolzoniTimeOptimal2017}.
\end{itemize}

We also establish existence and uniqueness of state trajectories and
existence of a time-optimal control.  Explicit upper bounds on the
eradication time guarantee finite simulation horizons independently of the
switching analysis.  Under the structural criterion, the single switch
result gives the representation
\[
V(z)=\inf_{\tau\ge 0}\{\tau+W(\Phi^0_\tau(z))\},
\]
where $W$ is the exit time under full vaccination and $\Phi^0$ is the flow
without vaccination.  Together with the value matching condition and the
identity $\phi'(\tau)=S\,\partial_SW$, this extends the switching time
characterizations of \citet{HyndEradication2021} to the present class of
state dependent rates.

\medskip
\noindent\textbf{Organization.}
Section~\ref{sec:problem} introduces the controlled SIR model and its
assumptions.  Section~\ref{sec:PMP_switch} proves the bang-bang and single
switch properties.
Section~\ref{sec:bounds} derives explicit upper bounds on the eradication time.

Section~\ref{sec:DPP_switch} proves existence of state trajectories and
optimal controls, gives the dynamic programming representation, and derives
a value matching condition.
Section~\ref{sec:numerics} verifies the structural condition for three
representative nonlinear parametrizations, reports numerical experiments
that confirm the single switch prediction and compare the observed
eradication times against the explicit bounds, and examines numerically the
behavior when the structural condition is violated.

\section{Problem formulation}\label{sec:problem}

\subsection{Controlled SIR dynamics}

We consider the controlled SIR system
\begin{equation}\label{eq:SIR}
\begin{cases}
\dot S = -\beta(S,I)\,S\,I - u(t)\,S,\\
\dot I = \beta(S,I)\,S\,I - \gamma(S,I)\,I,
\end{cases}
\qquad u(t)\in[0,1],
\end{equation}
where $S(t)$ and $I(t)$ denote the susceptible and infected fractions.
The control $u(t)$ represents vaccination effort.
We assume normalized population size $S(t)+I(t)\le 1$.

For a fixed threshold $\varepsilon>0$, the eradication time is defined as
\[
T=\inf\{t\ge0:\ I(t)\le\varepsilon\}.
\]
The objective is to minimize $T$ over all admissible controls.

Throughout we work on the positive simplex and its closure,
\begin{equation}\label{eq:Omega_def}
\Omega:=\{(S,I)\in(0,1]^2:\ S+I\le 1\},
\qquad
\overline\Omega:=\{(S,I)\in\R^2:\ S\ge0,\ I\ge0,\ S+I\le 1\},
\end{equation}
and we impose the following assumptions.

\begin{assumption}\label{ass:main}
\begin{enumerate}[label=(A\arabic*)]
\item $\beta$ and $\gamma$ extend to $C^1$ functions on an open neighbourhood
of $\overline\Omega$, and $\beta,\gamma>0$ on $\overline\Omega$;
\item $S(0)>0$, $I(0)>0$, and $S(0)+I(0)\le1$;
\item $\beta(S,I)+I\partial_I\beta(S,I)>0$ on $\Omega$.
\end{enumerate}
\end{assumption}

\begin{rem}\label{rem:monotonicity}
Condition (A3) requires incidence to increase with prevalence at fixed $S$:
\[
\partial_I\bigl(\beta(S,I)SI\bigr)
=S\bigl(\beta+I\partial_I\beta\bigr)>0.
\]
It gives the positive coefficient in the infected costate equation
\eqref{eq:lambdaI_ODE} used in \Cref{lem:lambdaI_sign}.
The stronger condition $\partial_I\beta\ge0$ is sufficient but not necessary;
\Cref{rem:saturating_incidence} gives an example with
$\partial_I\beta<0$.  The numerical families in \Cref{sec:numerics} satisfy
$\partial_I\beta\ge0$, together with
\[
\partial_S\beta\le0
\qquad\text{and}\qquad
\partial_I\gamma\ge0,
\]
so their transmission coefficient decreases with $S$ and their recovery
rate increases with $I$.  These are properties of the selected
parametrizations, not general assumptions about behavioral responses or
healthcare capacity.  Neither condition is used in the analysis.
In particular, the quantity $F$ defined in
\eqref{eq:F_def}, the coefficient of $\lambda_I$ in
\eqref{eq:lambdaI_ODE}, is never
required to be sign definite.
\end{rem}

Forward invariance is then a consequence of the dynamics rather than an
assumption. Since
\[
\frac{\de}{\de t}\bigl(S+I\bigr)=-u\,S-\gamma(S,I)\,I\le 0,
\]
the quantity $S+I$ is nonincreasing, so $S(0)+I(0)\le1$ implies
$S(t)+I(t)\le1$ for all $t\ge0$. Together with the positivity of $S$ and $I$
on $[0,T)$ (\Cref{lem:positivity}), the domain $\Omega$ is forward invariant.

Finally, we denote by
\begin{equation*}
\cA:=\bigl\{u:[0,\infty)\to[0,1]\ \text{Lebesgue measurable}\bigr\}
\end{equation*}
the set of admissible controls.
\section{Pontryagin analysis and the single switch structure}
\label{sec:PMP_switch}

This section establishes the qualitative structure of time-optimal
vaccination policies for the exit time problem associated with
\eqref{eq:SIR}.  In the classical constant rate SIR model, the optimal
policy is known to be bang-bang with at most one switching time (from no
vaccination to full vaccination).  When the rates depend on state, the
adjoint system contains additional terms involving $\partial_I\beta$ and
$\partial_I\gamma$.  We identify a structural condition that controls the
costate signs and yields the single switch property through an integrating
factor for the switching function.

Throughout, we work on the forward invariant domain $\Omega$ defined in
\eqref{eq:Omega_def}, assume \Cref{ass:main}, and consider an optimal pair
$(S^*,I^*,u^*)$ with $I^*(0)>\varepsilon$ and minimal eradication time $T^*$.

\subsection{Hamiltonian and adjoint system}
\label{subsec:PMP_core}

Let $x=(S,I)$ and let $\lambda=(\lambda_S,\lambda_I)$ denote the adjoint
variables.  For a minimum time problem the maximum principle carries a
cost multiplier $\lambda_0\ge0$, and the Pontryagin Hamiltonian is
\begin{equation*}
H(x,u,\lambda_0,\lambda)
=
\lambda_0
+\lambda_S\bigl(-\beta(S,I)SI-uS\bigr)
+\lambda_I\bigl(\beta(S,I)SI-\gamma(S,I)I\bigr),
\qquad u\in[0,1].
\end{equation*}
Pontryagin's Maximum Principle for exit time problems
\citep{PontryaginEtAl1962,LeeMarkus1967,SchattlerLedzewicz2012} yields a
constant $\lambda_0\ge0$ and an absolutely continuous adjoint
$\lambda(\cdot)$, not both trivial, such that
\begin{equation}\label{eq:adjoint_full}
\dot\lambda(t)
=
-\nabla_x H\bigl(x^*(t),u^*(t),\lambda_0,\lambda(t)\bigr)^{\!\top}
\quad \text{for a.e. } t\in[0,T^*],
\end{equation}
together with
\begin{equation}\label{eq:H_minimized}
H\bigl(x^*(t),u^*(t),\lambda_0,\lambda(t)\bigr)
=
\min_{v\in[0,1]}H\bigl(x^*(t),v,\lambda_0,\lambda(t)\bigr)
=0
\quad\text{for a.e. }t.
\end{equation}
The minimized Hamiltonian is continuous in time, so its value is zero on all
of $[0,T^*]$.  The transversality condition at the hitting time is
\begin{equation}\label{eq:transversality}
\lambda_S(T^*)=0.
\end{equation}
The terminal constraint is $I(T^*)=\varepsilon$ with free terminal time, and
\eqref{eq:transversality} is the standard condition for this formulation:
the adjoint must annihilate the tangent space of the target boundary
$\{I=\varepsilon\}$, which is spanned by the direction of the $S$ coordinate.
Indeed, $S(T^*)>0$ and
$S(T^*)+I(T^*)<S(0)+I(0)\le1$, so no boundary of the invariant simplex is
active at the terminal point.

An extremal is called \emph{normal} when $\lambda_0>0$, in which case one may
rescale to $\lambda_0=1$ and recover the Hamiltonian
\begin{equation}\label{eq:Hamiltonian}
H(x,u,\lambda)
=
1
+\lambda_S\bigl(-\beta(S,I)SI-uS\bigr)
+\lambda_I\bigl(\beta(S,I)SI-\gamma(S,I)I\bigr).
\end{equation}
Normality is not automatic and must be established; we do so in
\Cref{prop:normality} below, where it is shown to be a consequence of the
structural condition of \Cref{ass:G_pos} rather than an extra hypothesis.
Until then no normalization of $\lambda_0$ is used.

\paragraph{Positivity before eradication.}
We first record that on any admissible trajectory, both compartments remain
positive until the target is reached.

\begin{lem}\label{lem:positivity}
Let $(S(t),I(t))$ solve \eqref{eq:SIR} with $S(0)>0$, $I(0)>0$ and
$u(\cdot)\in[0,1]$.  Then, at every finite time for which the solution exists,
\[
S(t)>0,\qquad I(t)>0.
\]
\end{lem}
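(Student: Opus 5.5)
The plan is to write each equation of \eqref{eq:SIR} as a linear scalar ODE in its own variable, with a coefficient that is locally integrable along the given trajectory, and then solve it with an integrating factor. The positivity of $S$ and $I$ then follows because the solution is an exponential of a finite integral times a positive initial value.

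Fix a time $t$ at which the solution exists, so that $(S,I)$ is absolutely continuous on $[0,t]$. In particular it is bounded there, say it stays in a compact set $K$ contained in the open neighbourhood on which, by (A1), $\beta$ and $\gamma$ are $C^1$. The boundedness of the trajectory on $[0,t]$ is automatic from continuity; we do not need invariance of $\overline\Omega$ for this step. Define
\[
a(s):=\beta(S(s),I(s))\,I(s)+u(s),\qquad b(s):=\beta(S(s),I(s))\,S(s)-\gamma(S(s),I(s)).
\]
Since $\beta$ and $\gamma$ are continuous on $K$ and $u$ is measurable with values in $[0,1]$, both $a$ and $b$ belong to $L^\infty(0,t)$. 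The system \eqref{eq:SIR} reads exactly $\dot S=-a(s)S$ and $\dot I=b(s)I$ for a.e.\ $s$.

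The key step is the integrating factor. Consider $\frac{\de}{\de s}\bigl(S(s)e^{\int_0^s a}\bigr)$. This product is absolutely continuous, because both factors are, and its derivative equals $e^{\int_0^s a}(\dot S+aS)=0$ a.e. An absolutely continuous function with zero derivative almost everywhere is constant, so
\[
S(t)=S(0)\exp\Bigl(-\int_0^t a(s)\,\de s\Bigr)>0,
\]
using $S(0)>0$ and $\int_0^t a<\infty$. The same argument applied to $I$ gives
\[
I(t)=I(0)\exp\Bigl(\int_0^t b(s)\,\de s\Bigr)>0.
\]

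The only point needing care, and it is mild, is justifying that the coefficients are integrable without already assuming positivity or invariance. Boundedness of the trajectory on compact time intervals handles this, together with the fact that $u$ enters linearly and is bounded. Uniqueness of solutions is not needed, since the identity is derived along the given solution. The conclusion holds at every finite time in the existence interval, as claimed.
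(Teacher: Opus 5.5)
Your proposal is correct and follows the same route as the paper: the paper's proof simply records the two exponential representation formulas $S(t)=S(t_0)\exp\bigl(-\int_{t_0}^t(\beta I+u)\bigr)$ and $I(t)=I(t_0)\exp\bigl(\int_{t_0}^t(\beta S-\gamma)\bigr)$ and reads off positivity. Your integrating-factor argument, with bounded coefficients along the compact trajectory image, is the justification the paper leaves implicit.
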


\begin{proof}
The identities
\[
S(t)=S(t_0)\exp\!\left(
-\int_{t_0}^t [\beta(S(\tau),I(\tau))\,I(\tau)+u(\tau)]\,d\tau\right),\]
and
\[
I(t)=I(t_0)\exp\!\left(\int_{t_0}^t [\beta(S(\tau),I(\tau))\,S(\tau)-\gamma(S(\tau),I(\tau))]\,d\tau\right)
\]
hold for any $t\ge t_0$ as long as the solution exists, and imply the claim.
\end{proof}

\subsection{Switching function and the bang-bang rule}
\label{subsec:switching}

Since $H$ is affine in $u$, the optimal control is determined by the sign
of the switching function.  Define
\begin{equation*}
\psi(t)
:=
\frac{\partial H}{\partial u}\bigl(x^*(t),u^*(t),\lambda(t)\bigr)
=
-\lambda_S(t)\,S^*(t).
\end{equation*}
Because $u\mapsto H(\cdot,u,\cdot)$ is affine and $u\in[0,1]$, minimization
of $H$ over the control yields the endpoint rule
\begin{equation}\label{eq:bang_rule}
u^*(t)=
\begin{cases}
0,&\psi(t)>0,\\[2pt]
1,&\psi(t)<0,
\end{cases}
\qquad\text{for a.e. }t\in[0,T^*].
\end{equation}
Thus qualitative properties of $u^*$ reduce to the behavior of $\psi$.

\subsection{Adjoint equations in components}
\label{subsec:adjoint_components}

We record the componentwise adjoint equations, obtained directly from
$\dot\lambda=-\nabla_x H$ with $H$ as in \eqref{eq:Hamiltonian}.  Throughout,
quantities depending on state are evaluated along the optimal trajectory, and
subscripts on $\beta,\gamma$ denote partial derivatives.

\begin{lem}\label{lem:adjoint_components}
Along an optimal trajectory,
\begin{align}
\dot\lambda_S
&=\lambda_S\bigl(\beta I+\beta_S SI+u^*\bigr)
-\lambda_I\bigl(\beta I+\beta_S SI-\gamma_S I\bigr),\label{eq:lambdaS_ODE}\\
\dot\lambda_I
&=\lambda_S\,S\bigl(\beta+\beta_I I\bigr)
+\lambda_I\,F(S,I),\label{eq:lambdaI_ODE}
\end{align}
where
\begin{equation}\label{eq:F_def}
F(S,I):=\gamma+\gamma_I I-\beta S-\beta_I SI.
\end{equation}
\end{lem}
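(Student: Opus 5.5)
The plan is a direct differentiation of the Hamiltonian in \eqref{eq:Hamiltonian}, substituted into $\dot\lambda=-\nabla_xH$ as in \eqref{eq:adjoint_full}. Write the state velocity as $f_S=-\beta SI-uS$ and $f_I=\beta SI-\gamma I$. Then $H=\lambda_0+\lambda_Sf_S+\lambda_If_I$, and $\dot\lambda_S=-\lambda_S\partial_Sf_S-\lambda_I\partial_Sf_I$, $\dot\lambda_I=-\lambda_S\partial_If_S-\lambda_I\partial_If_I$. The constant $\lambda_0$ does not depend on $x$, so it drops out of the gradient. The formulas therefore hold whether or not the extremal is normal, and the normalization $\lambda_0=1$ is never used. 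This is consistent with the paper's deferral of normality to \Cref{prop:normality}.

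The key steps are the four partial derivatives, computed with the product rule since $\beta$ and $\gamma$ depend on $(S,I)$.
\begin{align*}
\partial_Sf_S&=-(\beta I+\beta_S SI+u), &\partial_If_S&=-S(\beta+\beta_I I),\\
\partial_Sf_I&=\beta I+\beta_S SI-\gamma_S I, &\partial_If_I&=\beta S+\beta_I SI-\gamma-\gamma_I I=-F.
\end{align*}
Substituting these with the minus signs gives \eqref{eq:lambdaS_ODE} directly. It also gives $\dot\lambda_I=\lambda_S S(\beta+\beta_I I)+\lambda_I F$, with $F$ as defined in \eqref{eq:F_def}.

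The differentiation is legitimate because, by (A1), $\beta$ and $\gamma$ are $C^1$ on a neighbourhood of $\overline\Omega$, and by \Cref{lem:positivity} the optimal trajectory stays in $\Omega$ on $[0,T^*]$. All coefficients are then continuous functions of $t$, apart from $u^*$, which is only measurable and bounded. The adjoint system is therefore a linear ODE with $L^\infty$ coefficients, and the identities hold for a.e.\ $t$, as in \eqref{eq:adjoint_full}.

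There is no real obstacle. The only points needing care are bookkeeping: keeping the sign conventions straight, and remembering that $\gamma$ multiplies $I$, so $\partial_I(\gamma I)=\gamma+\gamma_I I$. That term is exactly what produces the $\gamma_I I$ contribution in $F$.
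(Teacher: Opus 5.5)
Your proposal is correct and follows the paper's proof. Both compute $\dot\lambda=-\nabla_xH$ componentwise with the product rule for $\beta SI$ and $\gamma I$, and your four partial derivatives reproduce \eqref{eq:lambdaS_ODE}--\eqref{eq:lambdaI_ODE} exactly; your remark that $\lambda_0$ drops out of $\nabla_xH$, so the identities hold before normality is established, is an accurate refinement of the paper's use of the normalized Hamiltonian \eqref{eq:Hamiltonian}.
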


\begin{proof}
Immediate from $\dot\lambda_S=-\partial_S H$ and $\dot\lambda_I=-\partial_I H$,
using $\partial_S(\beta SI)=\beta I+\beta_S SI$, $\partial_I(\beta SI)=\beta S+\beta_I SI$,
and $\partial_I(\gamma I)=\gamma+\gamma_I I$.
\end{proof}

\begin{rem}
The derivatives $\beta_S,\gamma_S$ appear only in \eqref{eq:lambdaS_ODE},
hence directly in the switching dynamics (\Cref{lem:psi_dot} below); the
derivatives $\beta_I,\gamma_I$ appear only in \eqref{eq:lambdaI_ODE},
hence in the sign behaviour of $\lambda_I$.  In the constant rate case all
four vanish and one recovers the classical adjoint system.
\end{rem}

\subsection{Derivative of the switching function}
\label{subsec:psi_derivative}

The following identity is used to prove the single switch result.
Define the scalar field
\begin{equation}\label{eq:G_def}
G(S,I):=\beta(S,I)+S\,\partial_S\beta(S,I)-\partial_S\gamma(S,I)
=\partial_S\bigl(\beta(S,I)\,S-\gamma(S,I)\bigr),
\end{equation}
i.e.\ the sensitivity of the infected growth rate $\beta S-\gamma$ to the
susceptible fraction.

\begin{lem}\label{lem:psi_dot}
Along an optimal trajectory,
\begin{equation}\label{eq:psi_dot}
\dot\psi(t)
=
\beta_S(S,I)\,S I\,\psi(t)
+\lambda_I(t)\,S I\,G(S,I).
\end{equation}
In particular, at any time $t_0$ such that $\psi(t_0)=0$,
\begin{equation}\label{eq:psi_dot_at_zero}
\dot\psi(t_0)=\lambda_I(t_0)\,S^*(t_0)\,I^*(t_0)\,G\bigl(S^*(t_0),I^*(t_0)\bigr).
\end{equation}
\end{lem}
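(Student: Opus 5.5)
The identity follows by differentiating $\psi=-\lambda_S S^*$ with the product rule, which is legitimate a.e.\ because $\lambda_S$ and $S^*$ are absolutely continuous. We then substitute the adjoint equation \eqref{eq:lambdaS_ODE} of \Cref{lem:adjoint_components} and the state equation \eqref{eq:SIR}, and collect terms. Concretely,
\[
\dot\psi=-\dot\lambda_S\,S-\lambda_S\,\dot S .
\]
Inserting $\dot S=-\beta SI-u^*S$ and
$\dot\lambda_S=\lambda_S(\beta I+\beta_S SI+u^*)-\lambda_I(\beta I+\beta_S SI-\gamma_S I)$, the $\lambda_S$-terms are
\[
-\lambda_S S(\beta I+\beta_S SI+u^*)+\lambda_S S(\beta I+u^*)=-\lambda_S\,\beta_S S^2 I .
\]
Here the $u^*$ contributions and the $\beta I$ contributions cancel exactly. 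This cancellation is the key point: it removes the control from $\dot\psi$.

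Writing $-\lambda_S S=\psi$, this term becomes $\beta_S SI\,\psi$. The remaining $\lambda_I$-term is
\[
\lambda_I S(\beta I+\beta_S SI-\gamma_S I)=\lambda_I SI(\beta+S\beta_S-\gamma_S)=\lambda_I SI\,G ,
\]
using the definition \eqref{eq:G_def}. Together these give \eqref{eq:psi_dot} for a.e.\ $t$.

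The one point needing care is the normalization. \Cref{lem:adjoint_components} is written with $\lambda_0=1$, but $\lambda_0$ enters $H$ only as an additive constant and does not appear in $\nabla_xH$. The adjoint equations, and hence the computation above, are therefore valid for any $\lambda_0\ge0$, so no normality is needed. The control $u^*$ is only measurable, but it has dropped out, so the right-hand side of \eqref{eq:psi_dot} is continuous in $t$. It involves only the continuous quantities $S^*,I^*,\lambda$ and the $C^1$ data $\beta,\gamma$. Since $\psi$ is absolutely continuous and equals a.e.\ the integral of this continuous function, $\psi$ is in fact $C^1$ and \eqref{eq:psi_dot} holds for every $t\in[0,T^*]$.

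Evaluating \eqref{eq:psi_dot} at a zero $t_0$ of $\psi$ removes the first term and yields \eqref{eq:psi_dot_at_zero}. This pointwise statement is meaningful precisely because of the $C^1$ upgrade. That upgrade is the only step beyond bookkeeping; I expect no genuine obstacle.
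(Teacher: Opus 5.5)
Your proposal is correct and follows the paper's proof: differentiate $\psi=-\lambda_S S$, substitute $\dot S=-\beta SI-u^*S$ and \eqref{eq:lambdaS_ODE}, and note that the $u^*$ and $\beta I$ terms cancel, leaving $\beta_S SI\,\psi+\lambda_I SI\,G$. Your two added remarks are sound and tidy up points the paper leaves implicit or defers to \Cref{lem:no_singular}. First, the adjoint equations do not depend on $\lambda_0$, so no normality is needed here. Second, the right-hand side is continuous, so $\psi$ is $C^1$ and \eqref{eq:psi_dot_at_zero} holds pointwise.
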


\begin{proof}
Differentiate $\psi=-\lambda_S S$ and use $\dot S=-\beta SI-u^*S$ together
with \eqref{eq:lambdaS_ODE}:
\[
\dot\psi=-\dot\lambda_S\,S-\lambda_S\,\dot S
=\lambda_S S\bigl[(\beta I+u^*)-(\beta I+\beta_S SI+u^*)\bigr]
+\lambda_I S\bigl(\beta I+\beta_S SI-\gamma_S I\bigr).
\]
The first bracket equals $-\beta_S SI$, and since $\lambda_S=-\psi/S$ the
first term is $\beta_S SI\,\psi$.  The second term equals
$\lambda_I SI(\beta+\beta_S S-\gamma_S)=\lambda_I SI\,G$.  This proves
\eqref{eq:psi_dot}; evaluating at a zero of $\psi$ gives
\eqref{eq:psi_dot_at_zero}.
\end{proof}

Equation \eqref{eq:psi_dot} is a linear ODE for $\psi$ whose inhomogeneous
term is $\lambda_I SI\,G$.  The
homogeneous coefficient $\beta_S SI$ merely rescales $\psi$ and never
changes its sign.  Equation \eqref{eq:psi_dot_at_zero} shows that the
direction in which $\psi$ crosses zero is governed by the sign of the
product $\lambda_I\,G$, hence by the sensitivity of the growth rate with
respect to $S$.  The derivatives $\partial_I\beta$ and $\partial_I\gamma$ do
not enter this crossing condition.

\subsection{A structural condition and exclusion of singular arcs}
\label{subsec:structural}

The constant rate case has $G=\beta>0$.  When the rates depend on state, we
impose the following condition.

\begin{assumption}[Structural sign condition]\label{ass:G_pos}
In addition to \Cref{ass:main}, assume that
\begin{equation}\label{eq:G_pos}
G(S,I)=\beta+S\,\partial_S\beta-\partial_S\gamma>0
\qquad\text{on }\Omega.
\end{equation}
\end{assumption}

\begin{rem}
Condition \eqref{eq:G_pos} means that the per capita growth rate
$\beta S-\gamma$ increases strictly with $S$ at fixed $I$.
If $\gamma_S=0$, this means that
$\beta S$ increases with $S$, even when $\beta$ itself decreases.
For general recovery rates the condition concerns net growth, not
transmission alone.
The condition may fail when the decrease of $\beta$ in $S$ is sufficiently
large to make $\beta S$ decreasing, or when $\gamma$ increases sufficiently
rapidly with $S$.
\end{rem}

The structural condition also implies a property of the exit that is
independent of the adjoint system and will be used to prove normality.  Write
\begin{equation}\label{eq:growth_rate}
\rho(S,I):=\beta(S,I)\,S-\gamma(S,I),
\qquad\text{so that}\qquad
\dot I=\rho(S,I)\,I
\ \text{ and }\ \partial_S\rho=G .
\end{equation}

\begin{lem}\label{lem:transversal_exit}
Assume \Cref{ass:main} and \Cref{ass:G_pos}.  Let $u(\cdot)\in\cA$, let
$(S,I)$ be the corresponding trajectory from an initial state in $\Omega$
with $I(0)>\varepsilon$, and let
$T=\inf\{t\ge0:\,I(t)\le\varepsilon\}<\infty$ be the first hitting time.
Then
\begin{equation*}
\dot I(T)=\rho\bigl(S(T),\varepsilon\bigr)\,\varepsilon<0 .
\end{equation*}
\end{lem}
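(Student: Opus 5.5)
The plan is to prove the weak inequality $\rho(S(T),\varepsilon)\le0$ from the definition of the hitting time, and then to exclude the borderline case $\rho(S(T),\varepsilon)=0$ by a second-order argument that uses \Cref{ass:G_pos}.

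\textbf{Preliminaries.} Since $I(0)>\varepsilon$ and $I$ is continuous, $T>0$, $I(T)=\varepsilon$ and $I(t)>\varepsilon$ on $[0,T)$. By \Cref{lem:positivity} and the forward invariance noted after \Cref{ass:main}, $(S(T),\varepsilon)\in\Omega$. The equation $\dot I=\rho(S,I)I$ contains no control, and $(S,I)$ is continuous. Hence $I$ is $C^1$ and $\dot I(T)=\rho(S(T),\varepsilon)\varepsilon$ holds as a genuine derivative. Because $I(t)>\varepsilon=I(T)$ for $t<T$, the left difference quotients at $T$ are nonpositive, so $\rho(S(T),\varepsilon)\le0$.

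\textbf{Excluding $\rho(S(T),\varepsilon)=0$.} Suppose for contradiction that equality holds, and set $r(t):=\rho(S(t),I(t))$.

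1. Since $\rho$ is $C^1$ near $\overline\Omega$ by (A1), and $S,I$ are Lipschitz, $r$ is absolutely continuous with
\[
\dot r=G(S,I)\,\dot S+\partial_I\rho(S,I)\,\rho(S,I)\,I
\quad\text{a.e.}
\]

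2. Near $T$, the first term is bounded below by a positive constant. From $\dot S=-\beta SI-uS\le-\beta SI$ and continuity, there is $\delta>0$ and $c>0$ with $\dot S\le-c$ on $[T-\delta,T]$. Indeed $\beta>0$ by (A1), and $S(T)>0$, $I(T)=\varepsilon>0$. Shrinking $\delta$, continuity of $G$ and $G(S(T),\varepsilon)>0$ give $G\ge g>0$ there.

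3. The second term is small near $T$: $|\partial_I\rho\,\rho\,I|\le C|r(t)|$, and $r(t)\to r(T)=0$. Shrinking $\delta$ further, it is at most $cg/2$.

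4. Combining, $\dot r\le-cg/2$ a.e. on $[T-\delta,T]$, so
\[
r(t)\ge \tfrac{cg}{2}(T-t)>0 \qquad \text{for } t\in[T-\delta,T).
\]

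5. Thus $\dot I=rI>0$ on $[T-\delta,T)$, so $I$ is strictly increasing there and $I(t)<I(T)=\varepsilon$ for such $t$. This contradicts $I>\varepsilon$ on $[0,T)$.

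\textbf{Main obstacle.} The delicate step is the sign bookkeeping in item 3. The product term $\partial_I\rho\,\rho\,I$ has no definite sign, because no sign is assumed on $\partial_I\beta$ or $\partial_I\gamma$. It must be controlled by the factor $\rho$ itself, which vanishes at $T$ under the contradiction hypothesis, rather than by any structural assumption. The strict negativity of $\dot S$ near $T$ is the other essential ingredient. It relies on $\beta>0$ and on $S(T)>0$ from \Cref{lem:positivity}, and it does not depend on the control $u$.
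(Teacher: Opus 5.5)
Your proof is correct and follows the same strategy as the paper. You obtain $\rho(S(T),\varepsilon)\le0$ from the first-hitting property, then rule out $\rho(S(T),\varepsilon)=0$ by combining the control-independent linear decrease of $S$ near $T$ (from $\beta SI\ge c>0$) with $G\ge g>0$, treat the prevalence contribution as negligible, and reach the contradiction $\dot I>0$ on a left neighbourhood of $T$. The difference is only in packaging: you differentiate $r(t)=\rho(S(t),I(t))$ along the trajectory and absorb the $I$-term via $|\partial_I\rho\,\rho\,I|\le C|r|\to0$, whereas the paper splits $\rho(S(t),I(t))-\rho(S(T),\varepsilon)$ by the mean value theorem and weighs $S(t)-S(T)\ge c(T-t)$ against $I(t)-\varepsilon=o(T-t)$.
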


\begin{proof}
Since $T$ is the first hitting time, $I(t)>\varepsilon$ for $t<T$ and
$I(T)=\varepsilon$.  The function $I$ is continuously differentiable because
its equation does not contain the measurable control explicitly.  Hence
$\dot I(T)\le0$ and $\rho(S(T),\varepsilon)\le0$.

Suppose that $\rho(S(T),\varepsilon)=0$.  Then $\dot I(T)=0$, and
\begin{equation}\label{eq:I_little_o_at_exit}
I(t)-\varepsilon=o(T-t)
\qquad\text{as }t\uparrow T.
\end{equation}
By \Cref{lem:positivity} and continuity, there are $c>0$ and $\eta>0$ such
that
\[
S(s)\beta(S(s),I(s))I(s)\ge c,
\qquad s\in[T-\eta,T].
\]
Since $u\ge0$, for $t\in[T-\eta,T]$,
\begin{equation}\label{eq:S_linear_at_exit}
S(t)-S(T)
=\int_t^T S(s)\bigl(\beta(S(s),I(s))I(s)+u(s)\bigr)\,\de s
\ge c(T-t).
\end{equation}
After decreasing $\eta$, continuity of $G$ gives $G\ge g>0$ near the exit
point.  Also, $|\partial_I\rho|\le L$ there for some $L>0$.  The mean value
theorem, \eqref{eq:I_little_o_at_exit}, and \eqref{eq:S_linear_at_exit} give
\begin{align*}
\rho(S(t),I(t))
&=\rho(S(t),I(t))-\rho(S(T),I(t))
  +\rho(S(T),I(t))-\rho(S(T),\varepsilon)\\
&\ge g\bigl(S(t)-S(T)\bigr)-L|I(t)-\varepsilon|\\
&>0
\end{align*}
for $t<T$ sufficiently close to $T$.  Thus $\dot I(t)>0$ on a left
neighbourhood of $T$.  This implies $I(t)<I(T)=\varepsilon$ there, which
contradicts the definition of $T$.  Therefore
$\rho(S(T),\varepsilon)<0$.
\end{proof}

\begin{rem}
The prevalence contribution is bounded by $L|I(t)-\varepsilon|=o(T-t)$,
whereas the susceptible contribution is bounded below by $gc(T-t)$.
Thus the proof requires no sign condition on $\partial_I\rho$ or
$\partial_I\gamma$; see \Cref{rem:monotonicity}.  Geometrically,
\Cref{lem:transversal_exit} says that a trajectory can only reach
$\{I=\varepsilon\}$ through the part of that boundary on which the infected
compartment is decreasing; it cannot arrive tangentially.
\end{rem}

We next exclude the abnormal case.

\begin{prop}\label{prop:normality}
Assume \Cref{ass:main} and \Cref{ass:G_pos}, and let
$(\lambda_0,\lambda(\cdot))$ be a nontrivial Pontryagin multiplier with
$\lambda_0\ge0$, satisfying \eqref{eq:adjoint_full},
\eqref{eq:H_minimized}, and the transversality condition
\eqref{eq:transversality} along an optimal pair with $I^*(0)>\varepsilon$.
Then $\lambda_0>0$.  Consequently one may normalize
$\lambda_0=1$, and \eqref{eq:Hamiltonian} applies.
\end{prop}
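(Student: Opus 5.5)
Suppose, for contradiction, that $\lambda_0=0$; we will deduce $\lambda\equiv0$, contradicting nontriviality. The plan is to evaluate the Hamiltonian identity \eqref{eq:H_minimized} at the terminal time $T^*$, where transversality forces $\lambda_S(T^*)=0$. There \Cref{lem:transversal_exit} gives a strictly negative infected growth rate, which will force $\lambda_I(T^*)=0$. The linear adjoint system \eqref{eq:adjoint_full} then propagates $\lambda(T^*)=0$ backward to $\lambda\equiv0$.

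First, we use that the minimized Hamiltonian vanishes identically on $[0,T^*]$, as recorded after \eqref{eq:H_minimized}; in particular it vanishes at $t=T^*$. With $\lambda_0=0$ and $\lambda_S(T^*)=0$ from \eqref{eq:transversality}, the control term $-\lambda_S vS$ drops out at $T^*$. The minimized Hamiltonian there therefore reduces to
\[
0=\lambda_I(T^*)\bigl(\beta S^*I^*-\gamma I^*\bigr)(T^*)=\lambda_I(T^*)\,\rho\bigl(S^*(T^*),\varepsilon\bigr)\,\varepsilon .
\]
By \Cref{lem:transversal_exit}, applied to $u^*$, the factor $\rho(S^*(T^*),\varepsilon)\varepsilon$ is strictly negative. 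Hence $\lambda_I(T^*)=0$.

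Next, by \Cref{lem:adjoint_components}, written with a general $\lambda_0$ (the multiplier does not enter $\nabla_xH$), the adjoint satisfies the linear system $\dot\lambda=A(t)\lambda$. The coefficients of $A(t)$ are built from $\beta,\gamma$, their first derivatives along $(S^*,I^*)$, and $u^*$, so they are bounded and measurable on $[0,T^*]$. Uniqueness of absolutely continuous (Carathéodory) solutions of linear ODEs with integrable coefficients, for instance via Gronwall, then gives $\lambda\equiv0$ from $\lambda(T^*)=0$. Together with $\lambda_0=0$, this contradicts the nontriviality of $(\lambda_0,\lambda)$. So $\lambda_0>0$, and dividing the multiplier by $\lambda_0$ yields \eqref{eq:Hamiltonian}.

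The main obstacle is justifying the evaluation at $T^*$ itself. The relation \eqref{eq:H_minimized} holds only almost everywhere, so we rely on the paper's stated continuity of the minimized Hamiltonian. This continuity holds because $\min_v H$ is continuous in $(x,\lambda)$, and $x^*$ and $\lambda$ are absolutely continuous. Alternatively, one can take limits along times $t_n\uparrow T^*$ at which \eqref{eq:H_minimized} holds, using the continuity of $\lambda$, $S^*$ and $I^*$, and that the $u$-term is bounded by $|\lambda_S|S\to0$.
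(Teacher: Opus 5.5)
Your proposal is correct and follows the paper's proof essentially step for step. You evaluate the minimized Hamiltonian at $T^*$ with $\lambda_0=0$ and $\lambda_S(T^*)=0$, use \Cref{lem:transversal_exit} to force $\lambda_I(T^*)=0$, and conclude $\lambda\equiv0$ by backward uniqueness for the linear adjoint system. Your added justification of the evaluation at $T^*$, via continuity of the minimized Hamiltonian or a limit along $t_n\uparrow T^*$, only makes explicit what the paper asserts after \eqref{eq:H_minimized}.
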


\begin{proof}
Suppose $\lambda_0=0$.  Evaluating the minimized Hamiltonian at $t=T^*$ and using
$\lambda_S(T^*)=0$ from \eqref{eq:transversality} leaves
$\lambda_I(T^*)\,\dot I(T^*)=0$.  By \Cref{lem:transversal_exit},
$\dot I(T^*)<0$, so $\lambda_I(T^*)=0$ and therefore $\lambda(T^*)=0$.  But
for fixed $u^*(\cdot)$ the adjoint equation \eqref{eq:adjoint_full} is a
linear homogeneous ODE for $\lambda$ with coefficients that are bounded and
measurable on $[0,T^*]$, so $\lambda(T^*)=0$ forces $\lambda\equiv0$ by
uniqueness for the backward Cauchy problem.  Then
$(\lambda_0,\lambda)\equiv(0,0)$, contradicting the nontriviality asserted by
the maximum principle.  Hence $\lambda_0>0$.
\end{proof}

\begin{rem}
\Cref{lem:transversal_exit} is a statement about the state trajectory alone:
its proof uses only the dynamics \eqref{eq:SIR}, the first hitting
definition of $T$, and $G>0$.  In particular it does not invoke the adjoint
system, the value of $\lambda_0$, or any normalization.  This independence
allows the lemma to be used to establish normality and thereby
to justify the terminal sign $\lambda_I(T^*)>0$ obtained in
\Cref{lem:lambdaI_sign} below.
\end{rem}

\begin{lem}\label{lem:no_singular}
Under \Cref{ass:main} and \Cref{ass:G_pos}, every zero of $\psi$ along an
optimal extremal with $I^*(0)>\varepsilon$ is transversal.  In particular,
$\psi$ has finitely many zeros on $[0,T^*]$.
\end{lem}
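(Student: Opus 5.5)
Given \eqref{eq:psi_dot_at_zero} and $G>0$ with $S^*,I^*>0$ (\Cref{lem:positivity}), a zero $t_0$ of $\psi$ is transversal exactly when $\lambda_I(t_0)\neq0$. The plan is therefore to show that $\lambda_I$ and $\lambda_S$ cannot vanish simultaneously, and then to deduce finiteness from the fact that transversal zeros are isolated.

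\textbf{Step 1 (normalization and terminal data).} By \Cref{prop:normality} we may take $\lambda_0=1$. At $t=T^*$, transversality gives $\lambda_S(T^*)=0$, so $\psi(T^*)=0$. The Hamiltonian identity $H\equiv0$ then reduces to $1+\lambda_I(T^*)\dot I^*(T^*)=0$. Since $\dot I^*(T^*)<0$ by \Cref{lem:transversal_exit}, this yields $\lambda_I(T^*)=-1/\dot I^*(T^*)>0$. Hence the zero at $T^*$ is transversal, with $\dot\psi(T^*)>0$.

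\textbf{Step 2 (interior zeros).} Suppose $\psi(t_0)=0$ with $t_0<T^*$. Then $\lambda_S(t_0)=0$ because $S^*(t_0)>0$. Evaluating $H\equiv0$ at $t_0$ gives
\[
1+\lambda_I(t_0)\,\rho\bigl(S^*(t_0),I^*(t_0)\bigr)I^*(t_0)=0,
\]
so $\lambda_I(t_0)\neq0$. By \eqref{eq:psi_dot_at_zero}, $\dot\psi(t_0)=\lambda_I(t_0)S^*I^*G\neq0$. This is the key observation: normality together with $H\equiv0$ prevents $\lambda$ from vanishing in the direction of $\lambda_I$ exactly where $\lambda_S=0$.

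\textbf{Step 3 (regularity and finiteness).} The delicate point is that $u^*$ is only measurable, so $\dot\psi$ exists only almost everywhere. However, the right-hand side of \eqref{eq:psi_dot} contains no $u^*$: the $u^*$ terms cancel in the proof of \Cref{lem:psi_dot}. Since $\lambda_I$ is absolutely continuous and its equation \eqref{eq:lambdaI_ODE} contains no $u^*$ either, $\psi$ is $C^1$ on $[0,T^*]$ and \eqref{eq:psi_dot} holds everywhere. A transversal zero of a $C^1$ function is isolated. If $\psi$ had infinitely many zeros on the compact interval $[0,T^*]$, they would accumulate at some $\bar t$. By continuity $\psi(\bar t)=0$, and by Rolle's theorem $\dot\psi(\bar t)=0$, which contradicts Step 2. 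The same argument excludes singular arcs, since on a singular arc $\psi\equiv0$ on an interval, again giving $\dot\psi=0$ at a zero.

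The main obstacle I expect is this regularity issue: \eqref{eq:psi_dot} must hold pointwise, not merely almost everywhere, so that it can be evaluated at the specific time $t_0$ and combined with Rolle's theorem. I would verify this explicitly through the cancellation of $u^*$ noted above.
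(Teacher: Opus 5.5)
Your proof is correct and follows the paper's argument: normality plus the vanishing of the minimized Hamiltonian forces $\lambda_I(t_0)\neq0$ wherever $\lambda_S(t_0)=0$, so \eqref{eq:psi_dot_at_zero} with $G>0$ gives $\dot\psi(t_0)\neq0$, and continuity of the right-hand side of \eqref{eq:psi_dot} makes $\psi$ of class $C^1$, so its zeros are isolated and hence finite on $[0,T^*]$. The only difference is cosmetic: you read off $\lambda_I(t_0)\neq0$ directly from $1+\lambda_I(t_0)\rho I=0$, whereas the paper reaches the same contradiction by first using uniqueness for the adjoint equation to get $\lambda\equiv0$.
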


\begin{proof}
Let $\psi(t_0)=0$.  Since $S(t_0)>0$, we have $\lambda_S(t_0)=0$.  If also
$\lambda_I(t_0)=0$, uniqueness for the linear adjoint equation would give
$\lambda\equiv0$, in which case the minimized normal Hamiltonian equals $1$,
a contradiction.  Hence
$\lambda_I(t_0)\ne0$.  Equation \eqref{eq:psi_dot_at_zero} and
\Cref{ass:G_pos} now give $\dot\psi(t_0)\ne0$.  Here $\psi$ is continuously
differentiable because the right hand side of \eqref{eq:psi_dot} is
continuous.  Thus every zero is isolated.  The zero set is also compact, so
it is finite.
\end{proof}

\subsection{Sign structure of the infected costate and one switch optimality}
\label{subsec:one_switch_proof}

The following lemma uses the terminal conditions, \Cref{ass:main}, and
\Cref{ass:G_pos}.  No sign condition on $\partial_I\gamma$ is required.

\begin{lem}
\label{lem:lambdaI_sign}
Under \Cref{ass:main} and \Cref{ass:G_pos}, $\lambda_I(T^*)>0$ and
$\lambda_I$ has at most one
zero on $[0,T^*]$.  Consequently either $\lambda_I>0$ on all of $[0,T^*]$,
or there is $t_1\in[0,T^*)$ such that $\lambda_I(t_1)=0$,
$\lambda_I<0$ on $[0,t_1)$, and $\lambda_I>0$ on $(t_1,T^*]$.
\end{lem}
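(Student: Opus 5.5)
Normality (\Cref{prop:normality}) lets us set $\lambda_0=1$. We first determine the terminal sign of $\lambda_I$, and then show that $\lambda_I$ cannot cross zero downward.

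\textbf{Terminal sign.} At $t=T^*$ the transversality condition \eqref{eq:transversality} gives $\lambda_S(T^*)=0$. Since the minimized Hamiltonian vanishes identically, we have
\[
0=1+\lambda_I(T^*)\,\rho\bigl(S^*(T^*),\varepsilon\bigr)\,\varepsilon .
\]
By \Cref{lem:transversal_exit}, $\rho(S^*(T^*),\varepsilon)\varepsilon=\dot I^*(T^*)<0$. Hence
\[
\lambda_I(T^*)=-1/\dot I^*(T^*)>0 .
\]

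\textbf{Behaviour at a zero of $\lambda_I$.} Let $t_0$ be a zero of $\lambda_I$. The Hamiltonian identity at $t_0$ reads
\[
0=1+\lambda_S(t_0)\bigl(-\beta S I-u^*S\bigr).
\]
Minimization in $u$ lets us choose the minimizing control value, but the identity already holds for a.e.\ $t$ and the minimized Hamiltonian is continuous. The bracket $-\beta SI-uS$ is strictly negative for any $u\in[0,1]$, because $S,I>0$ by \Cref{lem:positivity} and $\beta>0$. Therefore
\[
\lambda_S(t_0)=\frac{1}{\beta SI+u^*S}>0 .
\]
Here the minimized-Hamiltonian value is used directly: $\min_v H = 1+\lambda_I\rho I + \min_v(-\lambda_S(\beta SI+vS))$, which at $\lambda_I=0$ gives $1-\lambda_S\beta SI-\max(\lambda_S,0)S=0$, and this forces $\lambda_S>0$.

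Then \eqref{eq:lambdaI_ODE} at $t_0$ gives
\[
\dot\lambda_I(t_0)=\lambda_S(t_0)\,S\,(\beta+\beta_I I)>0
\]
by (A3). Since $\lambda_I$ is absolutely continuous with a right-hand side that is continuous in $t$ apart from the $u^*$-dependence, the case $u^*$ needs care. The $\lambda_I$ equation does not contain $u^*$, so $\dot\lambda_I$ is continuous and $\lambda_I\in C^1$. Moreover, $\lambda_S$ is continuous, so $\lambda_S>0$ on a neighbourhood of $t_0$. Hence $\lambda_I$ is strictly increasing through each of its zeros.

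\textbf{At most one zero.} Every zero of $\lambda_I$ is therefore an isolated upward crossing. Two consecutive upward crossings are impossible: between them $\lambda_I$ would have to be positive just after the first and negative just before the second, which forces an intermediate zero. That intermediate zero would be a downward or non-transversal crossing, a contradiction. This gives at most one zero $t_1$, with $\lambda_I<0$ on $[0,t_1)$ and $\lambda_I>0$ on $(t_1,T^*]$. If there is no zero, continuity and $\lambda_I(T^*)>0$ give $\lambda_I>0$ throughout. Finally, $t_1\neq T^*$ because $\lambda_I(T^*)>0$.

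\textbf{Main obstacle.} The delicate step is the sign of $\lambda_S$ at a zero of $\lambda_I$. The argument extracts it from $\min_v H=0$, using positivity of $S$, $I$ and $\beta$, without relying on pointwise a.e.\ identities at the specific time $t_0$.
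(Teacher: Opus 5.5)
Your proposal is correct and follows the paper's proof essentially step for step. You get the terminal sign from $\min_v H=0$ with $\lambda_S(T^*)=0$ and transversal exit, then $\lambda_S>0$ at any zero of $\lambda_I$ from the minimized Hamiltonian (your identity $1-\lambda_S\beta SI-\max(\lambda_S,0)S=0$ is exactly the paper's contradiction argument), hence $\dot\lambda_I>0$ at every zero by (A3), which gives at most one zero. Drop the intermediate formula $\lambda_S(t_0)=1/(\beta SI+u^*S)$, since $u^*(t_0)$ is not meaningful at a single time; your min-based version already replaces it. Your remark that $\lambda_I\in C^1$, because its equation does not involve $u^*$, usefully justifies evaluating $\dot\lambda_I$ pointwise, which the paper uses only implicitly.
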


\begin{proof}
\emph{Terminal sign.} By \Cref{prop:normality} the extremal is normal, so we
may take $\lambda_0=1$ and use \eqref{eq:Hamiltonian}.  At $t=T^*$ we have
$\lambda_S(T^*)=0$ by \eqref{eq:transversality}, so the zero value of the
minimized Hamiltonian reduces to
$1+\lambda_I(T^*)\dot I(T^*)=0$.  By \Cref{lem:transversal_exit},
$\dot I(T^*)<0$, whence $\lambda_I(T^*)=-1/\dot I(T^*)>0$.

\emph{Behaviour at a zero.} Suppose $\lambda_I(t_1)=0$ for some $t_1$.  The
minimum of the Hamiltonian over $u\in[0,1]$ is continuous in time and equals
zero almost everywhere.  It therefore equals zero at $t_1$.  If
$\lambda_S(t_1)\le0$, then
\[
\min_{u\in[0,1]}
\{1-\lambda_S(t_1)S(t_1)(\beta I+u)\}\ge1,
\]
which is impossible.  Hence $\lambda_S(t_1)>0$.  Also,
\eqref{eq:lambdaI_ODE} gives
$\dot\lambda_I(t_1)=\lambda_S(t_1)\,S(t_1)\bigl(\beta+\beta_I I\bigr)(t_1)$,
and the factor $S(\beta+\beta_I I)>0$ by \Cref{ass:main}(A3)
and positivity of $S$.  Thus
$\dot\lambda_I(t_1)>0$.

\emph{At most one zero.}  The preceding calculation also shows that every
zero is isolated.  Since the zero set is closed in $[0,T^*]$, it is compact
and therefore finite.  If $t_a<t_b$ were two consecutive zeros, then
$\lambda_I>0$ just to the right of $t_a$ and, by absence of intermediate
zeros, $\lambda_I>0$ on $(t_a,t_b)$; but $\dot\lambda_I(t_b)>0$ forces
$\lambda_I<0$ just to the left of $t_b$, a contradiction.  Thus $\lambda_I$
has at most one zero, and together with $\lambda_I(T^*)>0$ and continuity
this yields the stated dichotomy.
\end{proof}

We now establish the one switch structure.  The function $\psi$ solves the
linear ODE \eqref{eq:psi_dot}, whose
inhomogeneous term changes sign at most once along the optimal trajectory,
which together with the transversality condition $\psi(T^*)=0$ forces at
most one interior sign change of $\psi$.

\begin{thm}\label{thm:single_switch}
Assume \Cref{ass:main} and \Cref{ass:G_pos}, with $I(0)>\varepsilon$. Then
any optimal control $u^*$ for the minimum eradication time problem
\eqref{eq:SIR} is bang-bang with at most one switch, and the switch is from
$u=0$ to $u=1$: there exists $\tau^*\in[0,T^*]$ such that
\[
u^{*}(t)=
\begin{cases}
0,&0\le t<\tau^{*},\\
1,&\tau^{*}<t\le T^*,
\end{cases}
\qquad\text{for a.e. }t\in[0,T^*].
\]
\end{thm}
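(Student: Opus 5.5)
The plan is to combine \Cref{lem:psi_dot}, \Cref{lem:no_singular} and \Cref{lem:lambdaI_sign} through an integrating factor for the linear equation \eqref{eq:psi_dot}. By \Cref{prop:normality} we normalize $\lambda_0=1$, and by \eqref{eq:transversality} we have $\psi(T^*)=-\lambda_S(T^*)S^*(T^*)=0$. Set
\[
\mu(t):=\exp\Bigl(-\int_0^t \beta_S(S^*,I^*)S^*I^*\,\de s\Bigr)>0,
\qquad \chi:=\mu\psi .
\]
Then $\chi$ has the same sign and zeros as $\psi$, and \eqref{eq:psi_dot} gives
\[
\dot\chi=\mu\,\lambda_I\,S^*I^*\,G .
\]
Because $\mu,S^*,I^*,G>0$ (using \Cref{lem:positivity} and \Cref{ass:G_pos}), $\dot\chi$ has the sign of $\lambda_I$.

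Next we use the dichotomy of \Cref{lem:lambdaI_sign}. If $\lambda_I>0$ on $[0,T^*]$, then $\chi$ is strictly increasing with $\chi(T^*)=0$. Hence $\chi<0$ on $[0,T^*)$, and \eqref{eq:bang_rule} gives $u^*=1$ a.e., i.e.\ $\tau^*=0$. Otherwise $\chi$ strictly decreases on $[0,t_1]$ and strictly increases on $[t_1,T^*]$, with $\chi(T^*)=0$. Then $\chi<0$ on $[t_1,T^*)$. On $[0,t_1]$, strict decrease means $\chi$ has at most one zero $\tau^*\in[0,t_1)$, with $\chi>0$ before $\tau^*$ and $\chi<0$ after. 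If $\chi(0)\le0$ there is no switch and $\tau^*=0$. So $\psi>0$ on $[0,\tau^*)$ and $\psi<0$ on $(\tau^*,T^*)$, and \eqref{eq:bang_rule} yields $u^*=0$ then $u^*=1$. Since $\psi$ vanishes only at finitely many points (\Cref{lem:no_singular}), the control is determined a.e., which gives the bang-bang claim.

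The main obstacle is the regularity needed for these monotonicity arguments. The coefficient $\beta_S S^*I^*$ is continuous, so $\mu$ is $C^1$. Since $\lambda_I$ is continuous, $\chi$ is $C^1$, and strict monotonicity follows from the sign of $\dot\chi$ away from the single zero $t_1$ of $\lambda_I$. One must also check that the zero $\psi(T^*)=0$ does not create an extra switch; it does not, since it is a terminal point and $\chi<0$ just before it in both cases.

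Finally, the direction of the switch needs no separate argument. A $1\to0$ crossing would need $\dot\chi>0$ at a zero before $t_1$, which is impossible. Equivalently, it would need $\lambda_I G>0$ together with $\psi$ going from negative to positive, which contradicts \eqref{eq:psi_dot_at_zero} combined with the sign structure above.
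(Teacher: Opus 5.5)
Your proposal is correct and follows the paper's proof essentially step for step. You use the same integrating factor $\mu$, with $\chi=\mu\psi$ playing the role of the paper's $\Theta$, and the same case analysis from the dichotomy of \Cref{lem:lambdaI_sign} with $\chi(T^*)=0$; your added remarks on the $C^1$ regularity of $\chi$ and the direction of the switch are correct, though redundant given the monotonicity argument.
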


\begin{proof}
By \Cref{lem:no_singular}, $u^*$ is bang-bang and determined by
$\operatorname{sign}\psi$ through \eqref{eq:bang_rule}, so it suffices to
show that $\psi$ has at most one interior sign change, from positive to
negative.

Let $\mu(t):=\exp\!\bigl(-\int_0^t\beta_S(S,I)\,SI\,\de s\bigr)>0$ and
$\Theta:=\mu\psi$, so that $\operatorname{sign}\Theta=\operatorname{sign}\psi$.
By \eqref{eq:psi_dot},
\[
\dot\Theta(t)=\mu(t)\,\lambda_I(t)\,S(t)I(t)\,G(S,I),
\]
hence $\operatorname{sign}\dot\Theta=\operatorname{sign}\lambda_I$, because
$\mu SI\,G>0$ by \Cref{ass:G_pos} and \Cref{lem:positivity}.  Transversality
gives $\Theta(T^*)=\mu(T^*)\psi(T^*)=0$.

We invoke the dichotomy of \Cref{lem:lambdaI_sign}.  If $\lambda_I>0$
throughout, then $\Theta$ is strictly increasing and $\Theta(T^*)=0$.
Therefore $\Theta<0$ on $[0,T^*)$, and $u^*=1$ almost everywhere.

In the second case, if $t_1=0$, the same conclusion follows because
$\lambda_I>0$ on $(0,T^*]$.  Suppose that $t_1>0$.  Then $\Theta$ is strictly
decreasing on $[0,t_1]$ and strictly increasing on $[t_1,T^*]$.  Since
$\Theta(T^*)=0$, we have $\Theta<0$ on $[t_1,T^*)$.  If
$\Theta(0)\le0$, then $\Theta<0$ on $(0,T^*)$ and $u^*=1$ almost
everywhere.  If $\Theta(0)>0$, then $\Theta$ has one zero
  $\tau^*\in(0,t_1)$.  In this case $\psi>0$ on $[0,\tau^*)$ and
$\psi<0$ on $(\tau^*,T^*)$.
In all cases $u^*$ has at most one switch, from $0$ to $1$.
\end{proof}

\begin{rem}
For constant rates, $G=\beta>0$ and $\beta+I\partial_I\beta=\beta>0$, so both
conditions hold automatically.  For rates that depend on state, $G>0$
controls the susceptible dependence, while \Cref{ass:main}(A3) gives the
infected costate sign structure.  The coefficient $F$ in
\eqref{eq:lambdaI_ODE} need not be sign definite.
\end{rem}

\begin{rem}\label{rem:saturating_incidence}
Consider
\[
\beta(S,I)=\frac{\beta_0}{(1+bS)(1+aI)},\qquad
\gamma(S,I)=\gamma_0,
\qquad a,b\ge0,\quad \beta_0,\gamma_0>0.
\]
These rates satisfy \Cref{ass:main}(A1), and direct differentiation gives
\[
\beta+I\partial_I\beta
=\frac{\beta_0}{(1+bS)(1+aI)^2}>0,\qquad
G=\frac{\beta_0}{(1+bS)^2(1+aI)}>0.
\]
Thus \Cref{thm:single_switch} applies for every admissible initial state
with $I(0)>\varepsilon$.  When $a>0$, the transmission coefficient decreases
with prevalence, since
\[
\partial_I\beta=-\frac{a\beta_0}{(1+bS)(1+aI)^2}<0,
\]
while incidence $\beta(S,I)SI$ increases and saturates as a function of $I$
at fixed $S$.  This family shows that the single switch result includes
decreasing transmission coefficients, not only the sufficient special case
$\partial_I\beta\ge0$.
\end{rem}

\begin{rem}
\label{rem:persistence}
The eradication time is defined as the first hitting time of the target set
$\mathcal G=\{I\le\varepsilon\}$, so our optimality results depend only on the
trajectory up to the hitting time.
In particular, the single switch structure in \Cref{thm:single_switch} does not
require any assumption preventing the trajectory from leaving $\mathcal G$
afterwards.
Nevertheless, for interpretability it is sometimes desirable to ensure that,
once the infection level falls below $\varepsilon$,
it remains below this threshold under any admissible control.
This can be guaranteed by an inward pointing condition on the boundary
$I=\varepsilon$, stated next.
\end{rem}

\begin{assumption}[No re-emergence after hitting]\label{ass:no_reemergence}
Fix $\varepsilon\in(0,1)$. Assume that
\begin{equation}\label{eq:inward_condition}
\beta(S,\varepsilon)\,S \;\le\; \gamma(S,\varepsilon)
\qquad \text{for all } S\in[0,1-\varepsilon].
\end{equation}
\end{assumption}

\begin{prop}
\label{prop:invariance_G}
Assume \Cref{ass:main,ass:no_reemergence} and let $u\in\cA$.
For the corresponding solution of \eqref{eq:SIR}, if
$I(t_0)\le\varepsilon$ for some $t_0\ge 0$, then
\[
I(t)\le \varepsilon \qquad \text{for all } t\ge t_0.
\]
In particular, once the trajectory hits the target set $\mathcal G=\{I\le\varepsilon\}$,
it cannot leave $\mathcal G$ afterwards (no re-emergence).
\end{prop}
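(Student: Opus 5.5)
The plan is a first-exit argument combined with a Gronwall estimate for the excess $w:=I-\varepsilon$. The hypothesis \eqref{eq:inward_condition} is only a non-strict inequality, so the field may be tangent to $\{I=\varepsilon\}$. A simple sign check of $\dot I$ at the boundary therefore does not suffice, and I will use a Lipschitz comparison instead. Throughout I use $\rho=\beta S-\gamma$ from \eqref{eq:growth_rate}, so that $\dot I=\rho(S,I)\,I$. Since $I$ does not contain the control explicitly, $I$ is $C^1$.

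First, argue by contradiction. Suppose $I(t_0)\le\varepsilon$ but $I(t_1)>\varepsilon$ for some $t_1>t_0$. Set
\[
s:=\sup\{t\in[t_0,t_1]:\ I(t)\le\varepsilon\}.
\]
By continuity, $I(s)=\varepsilon$, $s<t_1$, and $I(t)>\varepsilon$ on $(s,t_1]$.

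Second, control the location of $S$ on this interval. By \Cref{lem:positivity}, $S(t)>0$. Since $S+I$ is nonincreasing, $S(t)+I(t)\le 1$, so $I(t)>\varepsilon$ forces $S(t)<1-\varepsilon$. Hence $S(t)\in[0,1-\varepsilon]$ for $t\in[s,t_1]$, and \eqref{eq:inward_condition} gives $\rho(S(t),\varepsilon)\le0$ there.

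Third, compare. By (A1), $\rho$ is $C^1$ on a neighbourhood of the compact set $\overline\Omega$, so it is Lipschitz in $I$ there with some constant $L$. For $t\in(s,t_1]$,
\[
\rho(S(t),I(t))\le \rho(S(t),I(t))-\rho(S(t),\varepsilon)\le L\,(I(t)-\varepsilon).
\]
With $w=I-\varepsilon>0$ and $I\le1$, this yields
\[
\dot w=\rho\,I\le L\,w \quad\text{on } (s,t_1], \qquad w(s)=0 .
\]
Gronwall's inequality, applied to $e^{-L(t-s)}w(t)$, which is then nonincreasing, gives $w\le0$ on $[s,t_1]$. This contradicts $w(t_1)>0$, so $I(t)\le\varepsilon$ for all $t\ge t_0$.

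The main obstacle is the tangential case $\beta(S,\varepsilon)S=\gamma(S,\varepsilon)$, which the Lipschitz bound handles. The other point needing care is confirming that $S$ stays in the range $[0,1-\varepsilon]$ where the hypothesis applies. That follows from the nonincreasing $S+I$ together with $I>\varepsilon$. I will also remark that the argument assumes the solution exists on the relevant interval, which is guaranteed by the existence result of \Cref{sec:DPP_switch}.
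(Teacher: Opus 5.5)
Your proposal is correct and follows the paper's proof essentially step for step. Both arguments take the last time $\bar t$ with $I\le\varepsilon$, use monotonicity of $S+I$ to get $S<1-\varepsilon$ so that \eqref{eq:inward_condition} gives $\rho(S,\varepsilon)\le0$, bound $\rho(S,I)\le L(I-\varepsilon)$ with $L=\sup_{\overline\Omega}|\partial_I\rho|$ (valid because the vertical segment from $(S,\varepsilon)$ to $(S,I)$ lies in $\overline\Omega$), and close with Gronwall for $I-\varepsilon$, which vanishes at $\bar t$.
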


\begin{proof}
Comparing the signs of $\dot I$ at a first boundary contact does not by
itself give a contradiction, because \eqref{eq:inward_condition} permits
$\dot I=0$ there.  We argue instead by a Gronwall estimate above the
boundary.

Write $\rho(S,I):=\beta(S,I)S-\gamma(S,I)$, so that $\dot I=\rho(S,I)I$, and
set $L:=\sup_{\overline\Omega}|\partial_I\rho|$, which is finite because
$\beta,\gamma\in C^1(\overline\Omega)$.  Suppose, for contradiction, that
$I(t_1)>\varepsilon$ for some $t_1>t_0$, and put
\[
\bar t:=\sup\{t\in[t_0,t_1]:\ I(t)\le\varepsilon\},
\]
so that $I(\bar t)=\varepsilon$ and $I(t)>\varepsilon$ on $(\bar t,t_1]$.

For $t\in(\bar t,t_1]$ we have $I(t)>\varepsilon$, hence
$S(t)\le 1-I(t)<1-\varepsilon$, so \eqref{eq:inward_condition} applies at
$(S(t),\varepsilon)$ and gives $\rho(S(t),\varepsilon)\le0$.  Therefore
\[
\rho\bigl(S(t),I(t)\bigr)
=\rho\bigl(S(t),\varepsilon\bigr)
+\int_{\varepsilon}^{I(t)}\partial_I\rho\bigl(S(t),\sigma\bigr)\,\de\sigma
\ \le\ L\,\bigl(I(t)-\varepsilon\bigr).
\]
Setting $m:=I-\varepsilon>0$ on $(\bar t,t_1]$ and using $I\le1$,
\[
\dot m=\rho(S,I)\,I\ \le\ L\,m\,I\ \le\ L\,m
\qquad\text{on }(\bar t,t_1],
\qquad m(\bar t)=0 .
\]
Gronwall's inequality yields $m(t)\le m(\bar t)e^{L(t-\bar t)}=0$ for
$t\in[\bar t,t_1]$, contradicting $m(t_1)>0$.  Hence $I(t)\le\varepsilon$ for
all $t\ge t_0$.
\end{proof}

\begin{rem}
If \eqref{eq:inward_condition} is strengthened to a strict inequality, the
argument is immediate: at a first contact $\bar t$ one would
need $\dot I(\bar t)\ge0$ whereas strictness forces $\dot I(\bar t)<0$.  Only
the borderline case $\beta(S,\varepsilon)S=\gamma(S,\varepsilon)$ requires the
comparison estimate above.
\end{rem}
\section{Eradication in finite time and explicit upper bounds}
\label{sec:bounds}

In addition to the single switch result in \Cref{sec:PMP_switch}, we derive
an explicit upper bound on the eradication time.  The bound provides a simulation
horizon and ensures that the exit time value function is finite under
the assumptions on the nonlinear rates.  We bound the optimal eradication
time by the eradication time of the admissible control $u\equiv1$.
The resulting estimate depends only on coarse extrema of
$\beta$ and $\gamma$ over the forward invariant domain.

\subsection{A uniform comparison bound}

Fix $\varepsilon\in(0,1)$.  Recalling \eqref{eq:Omega_def} and
\Cref{ass:main}(A1), $\beta$ and $\gamma$ are positive and bounded on
$\Omega$; define
\begin{equation*}
\begin{aligned}
\beta_{\max}&:=\sup_{(S,I)\in\Omega}\beta(S,I),
&\qquad 0<\beta_{\max}<\infty,\\
\gamma_{\min}&:=\inf_{(S,I)\in\Omega}\gamma(S,I),
&\qquad 0<\gamma_{\min}<\infty.
\end{aligned}
\end{equation*}

\begin{thm}
\label{thm:finite_time_strong}
Fix $(S_0,I_0)\in\Omega$ with $I_0>\varepsilon$, and let
$T^*(S_0,I_0,\varepsilon)$ denote the minimum time needed to reach
$I(t)\le\varepsilon$ under admissible controls $u(\cdot)\in[0,1]$.
Define
\begin{equation}\label{eq:tc_def}
t_c
:=
\max\!\Bigl\{0,\ \ln\!\Bigl(\frac{2\beta_{\max}S_0}{\gamma_{\min}}\Bigr)\Bigr\}.
\end{equation}
Then $T^*(S_0,I_0,\varepsilon)<\infty$ and
\begin{equation}\label{eq:Tstar_bound_strong}
T^*(S_0,I_0,\varepsilon)
\;\le\;
t_c
+
\frac{2}{\gamma_{\min}}
\left[
\beta_{\max}S_0\,t_c
+
\ln\!\Bigl(\frac{I_0}{\varepsilon}\Bigr)
\right].
\end{equation}
Moreover, in the \emph{subcritical regime} $\beta_{\max}S_0<\gamma_{\min}$,
one has the sharper estimate
\begin{equation}\label{eq:Tstar_subcritical_strong}
T^*(S_0,I_0,\varepsilon)
\;\le\;
\frac{1}{\gamma_{\min}-\beta_{\max}S_0}
\ln\!\Bigl(\frac{I_0}{\varepsilon}\Bigr).
\end{equation}
\end{thm}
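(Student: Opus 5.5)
We will bound $T^*$ by the exit time of the admissible control $u\equiv1$, since $T^*\le T_1:=\inf\{t\ge0:I_1(t)\le\varepsilon\}$, where $(S_1,I_1)$ is the trajectory under full vaccination. By \Cref{lem:positivity} and the forward invariance of $\Omega$, the trajectory stays in $\Omega$ up to its exit, so $\beta\le\beta_{\max}$ and $\gamma\ge\gamma_{\min}$ hold along it. The argument therefore reduces to two scalar comparison estimates, one for $S_1$ and one for $I_1$.

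\emph{Step 1 (susceptibles decay).} From $\dot S_1=-\beta S_1I_1-S_1\le -S_1$ we obtain $S_1(t)\le S_0e^{-t}$. Hence the per capita growth rate satisfies
\[
\frac{\dot I_1}{I_1}=\beta S_1-\gamma\le \beta_{\max}S_0e^{-t}-\gamma_{\min}.
\]

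\emph{Step 2 (integrate).} Integrating gives
\[
\ln\frac{I_1(t)}{I_0}\le \beta_{\max}S_0\int_0^t e^{-s}\,\de s-\gamma_{\min}t.
\]
For the general bound we split at $t_c$. On $[0,t_c]$ we use the crude estimate $\beta_{\max}S_0e^{-s}\le\beta_{\max}S_0$, which gives a contribution of at most $\beta_{\max}S_0t_c$. For $s\ge t_c$ the choice \eqref{eq:tc_def} gives $\beta_{\max}S_0e^{-s}\le\gamma_{\min}/2$, so the rate is at most $-\gamma_{\min}/2$. Thus for $t\ge t_c$,
\[
\ln\frac{I_1(t)}{I_0}\le \beta_{\max}S_0t_c-\frac{\gamma_{\min}}{2}(t-t_c),
\]
where we have dropped the nonpositive term $-\gamma_{\min}t_c$ on $[0,t_c]$. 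Setting the right side equal to $\ln(\varepsilon/I_0)$ and solving for $t$ yields exactly \eqref{eq:Tstar_bound_strong}. As long as $I_1>\varepsilon$ the estimate forces $t$ to stay below this value, so $T_1$ is finite and bounded by it.

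\emph{Step 3 (subcritical regime).} When $\beta_{\max}S_0<\gamma_{\min}$, we use $S_1\le S_0$ directly. The rate is then at most $-(\gamma_{\min}-\beta_{\max}S_0)<0$, and integrating gives \eqref{eq:Tstar_subcritical_strong}.

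The argument is essentially routine. The one point needing care is that the bounds $\beta\le\beta_{\max}$ and $\gamma\ge\gamma_{\min}$ are valid only while the trajectory stays in $\Omega$. This is guaranteed by \Cref{lem:positivity} and the monotonicity of $S+I$. The suprema over $\Omega$ are finite by (A1), since $\beta$ and $\gamma$ are continuous and positive on the compact set $\overline\Omega$.
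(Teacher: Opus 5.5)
Your proposal is correct and follows the paper's proof essentially step by step. It compares with $u\equiv1$, uses the decay $S_1(t)\le S_0e^{-t}$, and splits at $t_c$, with the crude growth bound $\beta_{\max}S_0$ before $t_c$, the decay rate $\gamma_{\min}/2$ after it, and the direct estimate $S_1\le S_0$ in the subcritical regime. Your single integrated log-growth inequality in place of the paper's two Gr\"onwall steps is only a cosmetic difference.
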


\begin{proof}
Since $u\equiv 1$ is admissible, the optimal eradication time is bounded by
the eradication time under this strategy:
\begin{equation}\label{eq:opt_le_compare}
T^*(S_0,I_0,\varepsilon)\le T(1;S_0,I_0),
\end{equation}
where $T(1;S_0,I_0)$ denotes the first time $I(t)\le\varepsilon$ along the
solution to \eqref{eq:SIR} with $u\equiv 1$.

Under $u\equiv 1$, the susceptible equation satisfies
$\dot S=-\beta(S,I)SI-S\le -S$, hence
\begin{equation}\label{eq:S_decay}
S(t)\le S_0 e^{-t}\qquad\text{for all }t\ge 0.
\end{equation}
Introduce the threshold
\[
\delta:=\frac{\gamma_{\min}}{2\beta_{\max}}.
\]
By \eqref{eq:S_decay}, for all $t\ge t_c$ the solution satisfies
$S(t)\le \delta$ (this is exactly the choice of $t_c$ in \eqref{eq:tc_def}).

Next, for $t\ge t_c$ we estimate the infected dynamics:
\[
\dot I
=
\bigl(\beta(S,I)S-\gamma(S,I)\bigr)I
\le
(\beta_{\max}S(t)-\gamma_{\min})I
\le
(\beta_{\max}\delta-\gamma_{\min})I
=
-\frac{\gamma_{\min}}{2}\,I.
\]
Therefore,
\begin{equation}\label{eq:I_decay_after_tc}
I(t)\le I(t_c)\exp\!\Bigl(-\frac{\gamma_{\min}}{2}(t-t_c)\Bigr),
\qquad t\ge t_c.
\end{equation}

It remains to bound $I(t_c)$. For $0\le t\le t_c$ we have
\[
\dot I
=
(\beta(S,I)S-\gamma(S,I))I
\le
\beta(S,I)S\,I
\le
\beta_{\max}S(t)\,I
\le
\beta_{\max}S_0\,I,
\]
and Gr\"onwall's inequality yields
\begin{equation}\label{eq:I_tc_bound}
I(t_c)\le I_0\,\exp(\beta_{\max}S_0\,t_c).
\end{equation}
Combining \eqref{eq:I_decay_after_tc}--\eqref{eq:I_tc_bound}, it suffices to
choose $t\ge t_c$ such that
\[
\varepsilon
\ge
I_0\exp(\beta_{\max}S_0 t_c)\exp\!\Bigl(-\frac{\gamma_{\min}}{2}(t-t_c)\Bigr),
\]
which is equivalent to
\[
t
\ge
t_c+\frac{2}{\gamma_{\min}}
\left[\beta_{\max}S_0\,t_c+\ln\!\Bigl(\frac{I_0}{\varepsilon}\Bigr)\right].
\]
This gives an upper bound for $T(1;S_0,I_0)$, and together with
\eqref{eq:opt_le_compare} proves \eqref{eq:Tstar_bound_strong}.

Finally, in the subcritical regime $\beta_{\max}S_0<\gamma_{\min}$ we argue
directly, without invoking $t_c$: since $S(t)\le S_0$ for all $t\ge0$ (the
susceptible fraction is nonincreasing),
\[
\dot I\le (\beta_{\max}S_0-\gamma_{\min})I
=-(\gamma_{\min}-\beta_{\max}S_0)I,
\]
so $I(t)\le I_0 e^{-(\gamma_{\min}-\beta_{\max}S_0)t}$ and solving
$I(t)=\varepsilon$ yields \eqref{eq:Tstar_subcritical_strong}.
\end{proof}

\begin{rem}
The estimate \eqref{eq:Tstar_bound_strong} is \emph{uniform} in the sense
that it requires no further structure of $\beta(S,I)$ and $\gamma(S,I)$
beyond boundedness and positivity on $\Omega$.  The transient time $t_c$
is the time needed (under full vaccination) to drive $S(t)$ below the
threshold $\delta=\gamma_{\min}/(2\beta_{\max})$, after which the infected
population decays at a guaranteed exponential rate $\gamma_{\min}/2$.
In the numerical experiments, \eqref{eq:Tstar_bound_strong} provides a
simulation horizon and a reference bound.
\end{rem}

\subsection{Consequences for single switch policies}

The bound is conservative because it is derived from $u\equiv1$.  Under the
single switch structure of \Cref{thm:single_switch}, a smaller upper bound can
be obtained by optimizing over the switching time.

\begin{cor}\label{cor:two_policy_bound}
Let $T_1(S,I,\varepsilon)$ denote the eradication time under $u\equiv 1$.
Then for any $\tau\ge 0$,
\[
T^*(S_0,I_0,\varepsilon)
\le
\tau + T_1\!\bigl(\Phi^0_\tau(S_0,I_0),\varepsilon\bigr),
\]
where $\Phi^0$ is the flow of the uncontrolled dynamics ($u\equiv 0$).
In particular,
\[
T^*(S_0,I_0,\varepsilon)\le T_1(S_0,I_0,\varepsilon).
\]
\end{cor}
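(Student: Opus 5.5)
The plan is a concatenation argument: for each $\tau\ge0$ we build an admissible control that runs the uncontrolled flow until $\tau$ and full vaccination afterwards, and we use it as a competitor in the minimization defining $T^*$. Fix $\tau\ge0$ and set $z_\tau:=\Phi^0_\tau(S_0,I_0)$. Define
\[
u_\tau(t):=\begin{cases}0,&0\le t<\tau,\\ 1,&t\ge\tau.\end{cases}
\]
This control is measurable with values in $[0,1]$, so $u_\tau\in\cA$. By uniqueness of solutions of \eqref{eq:SIR} for a fixed measurable control (the vector field is $C^1$ in the state and affine in $u$), the trajectory driven by $u_\tau$ coincides with $\Phi^0_t(S_0,I_0)$ on $[0,\tau]$. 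On $[\tau,\infty)$ it coincides with the $u\equiv1$ trajectory started from $z_\tau$, shifted in time by $\tau$. Forward invariance of $\Omega$ and \Cref{lem:positivity} guarantee that $z_\tau\in\Omega$, so $T_1(z_\tau,\varepsilon)$ is defined. Moreover \Cref{thm:finite_time_strong}, whose proof bounds the $u\equiv1$ exit time from any point of $\Omega$, shows that $T_1(z_\tau,\varepsilon)$ is finite whenever $I(z_\tau)>\varepsilon$.

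Next we bound the hitting time of this trajectory. If $I(t)\le\varepsilon$ already for some $t\le\tau$, then the first hitting time under $u_\tau$ is at most $\tau\le\tau+T_1(z_\tau,\varepsilon)$. This uses the convention $T_1\ge0$, with $T_1=0$ when the starting point already has $I\le\varepsilon$. Otherwise $I>\varepsilon$ on $[0,\tau]$, and the first hitting time equals $\tau+T_1(z_\tau,\varepsilon)$ exactly. In either case $T^*(S_0,I_0,\varepsilon)$, being an infimum over $\cA$, is bounded above by the hitting time under $u_\tau$, which gives the first inequality. Taking $\tau=0$, so that $\Phi^0_0=\mathrm{id}$, gives $T^*\le T_1(S_0,I_0,\varepsilon)$; this is also the comparison \eqref{eq:opt_le_compare}.

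No step here is a real obstacle. The only point that needs care is the bookkeeping: the time shift, uniqueness for the concatenated control, and the degenerate case where the target set is reached before the switch. The corollary does not depend on \Cref{thm:single_switch}. That theorem is needed only to conclude that the infimum over $\tau$ of the right-hand side actually equals $T^*$.
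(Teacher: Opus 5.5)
Your proposal is correct and follows essentially the same route as the paper's proof. Both use the switching control $u_\tau$ as a competitor and split into two cases: if the target is reached before $\tau$, the hitting time is at most $\tau$; otherwise it equals $\tau+T_1(\Phi^0_\tau(S_0,I_0),\varepsilon)$.
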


\begin{proof}
Fix $\tau\ge 0$ and consider the admissible switching control
$u_\tau(t)=0$ for $t<\tau$ and $u_\tau(t)=1$ for $t\ge \tau$.
The trajectory coincides with $\Phi^0_t(S_0,I_0)$ on $[0,\tau]$ and then
evolves under $u\equiv 1$ from the state $\Phi^0_\tau(S_0,I_0)$, unless it
has already reached the target.  In the first case its eradication time is
$\tau + T_1(\Phi^0_\tau(S_0,I_0),\varepsilon)$.  In the second case it is at
most $\tau$.  The stated inequality follows in both cases.
\end{proof}

\begin{rem}
\Cref{cor:two_policy_bound} anticipates the dynamic programming
representation developed in \Cref{sec:DPP_switch}: once the post-switch
value under $u\equiv 1$ is understood, bounding (and computing) the
optimal eradication time reduces to optimization over the switching time.
\end{rem}

\subsection{Explicit bounds for representative nonlinear scenarios}
\label{subsec:scenario_extrema}

To use \Cref{thm:finite_time_strong} in numerical experiments, one needs
conservative estimates of $\beta_{\max}$ and $\gamma_{\min}$ on $\Omega$.
For many parametrizations depending on state, simple explicit bounds are
available from monotonicity and the constraints $S\in(0,1]$, $I\in(0,1]$.

\paragraph{Scenario (A): saturating contact.}
For
\[
\beta(S,I)=\beta_0\frac{1+aI}{1+bS},
\qquad
\gamma(S,I)=\gamma_0+cI,
\]
with $\beta_0,\gamma_0>0$ and $a,b,c\ge0$, one has
\begin{equation}\label{eq:scenarioA_extrema}
\beta_{\max}=\beta_0(1+a),\qquad
\gamma_{\min}=\gamma_0,
\end{equation}
since $1+bS\ge 1$ and $0\le I\le 1$ on $\overline\Omega$.

\paragraph{Scenario (B): exponential transmission.}
For
\[
\beta(S,I)=\beta_0\exp(\alpha I-\sigma S),
\qquad
\gamma(S,I)=\gamma_0+cI,
\]
with $\beta_0,\gamma_0>0$ and $\alpha,\sigma,c\ge0$, we obtain
\begin{equation*}
\beta_{\max}=\beta_0 e^{\alpha},\qquad
\gamma_{\min}=\gamma_0,
\end{equation*}
using $I\le 1$ and $-\sigma S\le 0$ on $\Omega$.

\paragraph{Scenario (C): power law heterogeneity.}
For
\[
\beta(S,I)=\beta_0(1+k_I I)^{m}(1-k_S S)^{n},
\qquad
\gamma(S,I)=\gamma_0+cI,
\]
with $\beta_0,\gamma_0>0$, $k_I,c\ge0$, $0\le k_S<1$, and $m,n\ge0$, we
have
\begin{equation}\label{eq:scenarioC_extrema}
\beta_{\max}=\beta_0(1+k_I)^{m},\qquad
\gamma_{\min}=\gamma_0.
\end{equation}

\begin{rem}
Substituting \eqref{eq:scenarioA_extrema}--\eqref{eq:scenarioC_extrema}
into \Cref{thm:finite_time_strong} yields fully explicit simulation
horizons in terms of scenario parameters and initial data.  The extrema
above are exact on $\overline\Omega$: the maximum of $\beta$ is attained at
$(0,1)$ and the minimum of $\gamma$ on $I=0$.  The time bound remains
conservative because it uses global rate extrema and a uniform comparison
estimate, rather than the rates encountered along a particular trajectory.
Optimizing these same extrema over the triangle would not improve them.
\end{rem}
\section{Dynamic programming and the Hamilton--Jacobi equation}
\label{sec:DPP_switch}

This section establishes existence of an optimal control and derives the
switching time representation by dynamic programming.  It also relates
the problem to a Hamilton--Jacobi equation and identifies the boundary
behavior relevant to that interpretation.

\subsection{Exit time value function and dynamic programming principle}
\label{subsec:DPP}

For an admissible control $u(\cdot)\in\cA$, let
\[
T^u(S,I):=\inf\{t\ge0:\ I^u(t)\le\varepsilon\}
\]
denote the eradication time.
We define the exit time value function
\begin{equation}\label{eq:V_def}
V(S,I):=\inf_{u(\cdot)\in\cA} T^u(S,I),
\qquad (S,I)\in\Omega.
\end{equation}
By definition, $V(S,I)=0$ for all $(S,I)\in\mathcal G:=\{I\le\varepsilon\}$.

For $h\ge0$, set $\theta^u:=\min\{h,T^u(S,I)\}$.  The stopped dynamic
programming principle is
\begin{equation}\label{eq:DPP_V}
V(S,I)
=
\inf_{u(\cdot)\in\cA}
\Bigl\{
\theta^u + V\bigl(S^u(\theta^u),I^u(\theta^u)\bigr)
\Bigr\},
\qquad h\ge0.
\end{equation}
This form remains valid when a trajectory reaches the target before $h$.
The principle is standard \citep[Ch.~IV]{BardiCapuzzoDolcetta1997}.  We use
only the following consequence, which does not require regularity of $V$.

\begin{prop}
\label{prop:optimality_principle}
Assume \Cref{ass:main}.  Let $z\in\Omega$ with $I>\varepsilon$, let $u^*$ be
an optimal control for $z$ and $x^*(\cdot)$ the corresponding trajectory.
Then
\begin{equation*}
V\bigl(x^*(t)\bigr)=V(z)-t
\qquad\text{for every } t\in[0,V(z)].
\end{equation*}
\end{prop}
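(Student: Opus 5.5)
I will prove the two inequalities $V(x^*(t))\le V(z)-t$ and $V(x^*(t))\ge V(z)-t$ separately. I work only with the definition \eqref{eq:V_def} and the semigroup (concatenation and time shift) property of admissible controls, not with regularity of $V$. Write $T^*:=V(z)=T^{u^*}(z)$; this is finite by \Cref{thm:finite_time_strong}. Fix $t\in[0,T^*]$.

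\emph{Upper bound.} Define the shifted control $u^*_t(s):=u^*(t+s)$, which is measurable with values in $[0,1]$, hence in $\cA$. By uniqueness of trajectories for the Carath\'eodory ODE \eqref{eq:SIR}, with Lipschitz right-hand side in the state by (A1), the trajectory from $x^*(t)$ under $u^*_t$ is $s\mapsto x^*(t+s)$. Since $I^*(r)>\varepsilon$ for $r<T^*$ and $I^*(T^*)=\varepsilon$, the first hitting time of this shifted trajectory is exactly $T^*-t$. Therefore $V(x^*(t))\le T^{u^*_t}(x^*(t))=T^*-t$. The endpoint $t=T^*$ is trivial, because $x^*(T^*)\in\mathcal G$ and so $V=0$ there.

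\emph{Lower bound.} Suppose instead that $V(x^*(t))<T^*-t$. Then there is $v\in\cA$ with $T^v(x^*(t))<T^*-t$. Concatenate to get $w(s):=u^*(s)$ for $s<t$ and $w(s):=v(s-t)$ for $s\ge t$. This $w$ is measurable, so $w\in\cA$. The $w$-trajectory from $z$ coincides with $x^*$ on $[0,t]$ and then follows the $v$-trajectory from $x^*(t)$. Its first hitting time is therefore at most $t+T^v(x^*(t))<T^*$. This contradicts the optimality of $u^*$, so $V(x^*(t))\ge T^*-t$.

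The only delicate point is the bookkeeping of first hitting times under the shift and the concatenation. For the shift, I must check that the hitting time is \emph{exactly} $T^*-t$ and not smaller, which uses $I^*>\varepsilon$ on $[0,T^*)$ from the definition of $T^*$ as an infimum together with continuity of $I^*$. For the concatenation, only an upper bound on the hitting time is needed. If the hitting happens before $t$, that is already impossible for $t\le T^*$ except at $t=T^*$, where the claim is trivial. The remaining technical point is existence and uniqueness of solutions for measurable controls on the relevant horizon. Local Lipschitz continuity from (A1), together with forward invariance of $\Omega$ (so trajectories stay in a compact set), gives global existence and uniqueness by the standard Carath\'eodory theory.
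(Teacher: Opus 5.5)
Your proof is correct. It has the same skeleton as the paper's: the upper bound comes from the time-shifted control $s\mapsto u^*(t+s)$, and the lower bound from concatenation at time $t$.

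The one genuine difference is in the lower bound. The paper fixes $t<V(z)$ and invokes \Cref{prop:existence_opt}(E3) to obtain a control $u'$ that is \emph{optimal} from $x^*(t)$. Concatenating it with $u^*$ gives $V(z)\le t+V(x^*(t))$ directly. You argue by contradiction using only the definition of $V(x^*(t))$ as an infimum. If $V(x^*(t))<V(z)-t$, some admissible $v$ already reaches $\mathcal G$ from $x^*(t)$ in less than $V(z)-t$, and the concatenation then beats $V(z)$.

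Your route is more elementary and more robust. It needs no Filippov-type existence theorem, and hence no convexity of the velocity sets, at the intermediate state; only optimality of the given $u^*$ is used. The identity would therefore survive in settings where optimal controls need not exist from every state. The paper's route is slightly shorter because (E3) is already available under \Cref{ass:main}.

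Two small remarks on your bookkeeping. First, in the shift step you only need $I^*(T^*)\le\varepsilon$ to conclude $V(x^*(t))\le V(z)-t$. The check that the shifted hitting time is not \emph{smaller} than $V(z)-t$ is harmless but not needed, so it is not really the delicate point. Second, in the concatenation you use implicitly that a finite hitting time is attained: $I^v\bigl(T^v(x^*(t))\bigr)\le\varepsilon$ by continuity of $I^v$. This is what justifies bounding the concatenated hitting time by $t+T^v(x^*(t))$, and it is worth stating.
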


\begin{proof}
Fix $t\in[0,V(z)]$ and recall $V(z)=T^{u^*}(z)$.

\emph{Upper bound.}  The shifted control $s\mapsto u^*(t+s)$ is admissible
from $x^*(t)$ and steers it into $\mathcal G$ in time $V(z)-t$, because
$x^*$ first reaches $\mathcal G$ at time $V(z)$.  Hence
$V(x^*(t))\le V(z)-t$.

\emph{Lower bound.}  The case $t=V(z)$ follows from
$x^*(V(z))\in\mathcal G$.  Let $t<V(z)$.  By
\Cref{prop:existence_opt}(E3) there is a control
$u'$ optimal from $x^*(t)$.  The concatenation equal to $u^*$ on $[0,t)$ and
to $u'(\cdot-t)$ on $[t,\infty)$ is admissible from $z$.  Along it the
trajectory coincides with $x^*$ on $[0,t]$, which does not meet
$\mathcal G$ before time $V(z)\ge t$, and thereafter reaches $\mathcal G$
at time $t+V(x^*(t))$.  Therefore $V(z)\le t+V(x^*(t))$.
\end{proof}

\subsection{Formal Hamilton--Jacobi equation}
\label{subsec:HJB_formal}

Let
\[
f(S,I,u)
=
\bigl(-\beta(S,I)SI-uS,\ \beta(S,I)SI-\gamma(S,I)I\bigr)
\]
denote the controlled vector field.
Formally expanding \eqref{eq:DPP_V} for small $\tau>0$ yields the
Hamilton--Jacobi equation
\begin{equation}\label{eq:HJB_formal}
\inf_{u\in[0,1]}
\Bigl\{
1+\nabla V(S,I)\cdot f(S,I,u)
\Bigr\}
=0,
\qquad (S,I)\in\operatorname{int}(\Omega)\setminus\mathcal G,
\end{equation}
with boundary condition
\[
V(S,I)=0 \qquad \text{for } (S,I)\in\mathcal G.
\]

Since the Hamiltonian is affine in $u$, the minimization can be carried out
explicitly, leading to
\begin{equation}\label{eq:HJB_explicit}
\beta(S,I)SI\,V_S
+(\gamma(S,I)-\beta(S,I)S)I\,V_I
+S\,(V_S)^+
=
1,
\qquad (S,I)\in\operatorname{int}(\Omega)\setminus\mathcal G,
\end{equation}
where $(x)^+=\max\{x,0\}$.

\subsection{Viscosity solution interpretation}
\label{subsec:viscosity}

We briefly recall the viscosity solution framework associated with
\eqref{eq:HJB_explicit}.
Define the Hamiltonian
\[
\mathcal H\bigl((S,I),(p_S,p_I)\bigr)
:=
\sup_{u\in[0,1]}
\Bigl\{
-1+p_S(\beta SI+uS)+p_I(\gamma I-\beta SI)
\Bigr\}.
\]
This is the negative of the minimized expression in
\eqref{eq:HJB_formal}.  The sign is relevant for viscosity inequalities:
at a local maximum of $V-\varphi$, dynamic programming gives
$1+\nabla\varphi\cdot f(\cdot,u)\ge0$ for every constant $u$, hence
$\mathcal H(\cdot,\nabla\varphi)\le0$.

\begin{defn}
A continuous function $V:\Omega\to\R$ is a viscosity subsolution
(respectively, supersolution) of \eqref{eq:HJB_explicit} if, for every
$\varphi\in C^1(\Omega)$ such that $V-\varphi$ attains a local maximum
(respectively, minimum) at
$(S,I)\in\operatorname{int}(\Omega)\setminus\mathcal G$, one has
\[
\mathcal H\bigl((S,I),\nabla\varphi(S,I)\bigr)\le 0
\quad
(\text{respectively, } \ge 0).
\]
\end{defn}

Continuity of the controlled flow and finiteness of the exit time do not imply
that $V$ is a continuous viscosity solution of \eqref{eq:HJB_explicit} with
boundary datum $V=0$ on $\mathcal G$.  Part of the target boundary is an
\emph{outflow} boundary, from which trajectories move away for every
admissible control.  This issue can occur for constant rates as well: it
concerns the convention $T=0$ on the entire target, not state dependence
alone.

\begin{prop}
\label{prop:V_discontinuous}
Assume \Cref{ass:main} and suppose there exists
$\bar S\in(0,1-\varepsilon)$ with
\begin{equation}\label{eq:outflow}
\beta(\bar S,\varepsilon)\,\bar S-\gamma(\bar S,\varepsilon)>0 .
\end{equation}
Then $V$ is discontinuous at $\bar z:=(\bar S,\varepsilon)$.  Indeed
$V(\bar z)=0$, while there exist $t_0>0$ and $\varrho>0$ such that
\[
V(\bar S,\varepsilon+\delta)\ \ge\ t_0
\qquad\text{for all }\delta\in(0,\varrho).
\]
\end{prop}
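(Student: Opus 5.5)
The plan is to show that trajectories starting just above $\bar z$ must first climb away from the target and then need a positive amount of time to come back. $V(\bar z)=0$ is immediate from the definition, since $\bar z\in\mathcal G$.

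\emph{Step 1: a uniform outflow neighbourhood.} By continuity of $\rho(S,I)=\beta S-\gamma$ (Assumption (A1)) and \eqref{eq:outflow}, I would choose $r>0$ and $c>0$ such that $\rho\ge c$ on the closed box $B:=[\bar S-r,\bar S+r]\times[\varepsilon-r,\varepsilon+r]$. Shrinking $r$ if necessary, I would also assume $B\subset\{S>0\}$. Set $M:=\sup_{\overline\Omega}|f(S,I,u)|$ over $u\in[0,1]$; it is finite because $\beta,\gamma$ are bounded on $\overline\Omega$ and $S,I\le1$. Then every controlled trajectory starting at a point of the smaller box $B':=[\bar S-r/2,\bar S+r/2]\times[\varepsilon,\varepsilon+r/2]$ stays in $B$ for all $t\in[0,t_0]$, where $t_0:=r/(2M)$. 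The point is only that $|x(t)-x(0)|\le Mt$, uniformly in the control. The forward invariance discussion in Section~\ref{sec:problem} keeps the trajectory in $\overline\Omega$, so $M$ applies.

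\emph{Step 2: no exit before $t_0$.} Fix $\delta\in(0,\varrho)$ with $\varrho:=r/2$, and let $u\in\cA$ be arbitrary, starting from $(\bar S,\varepsilon+\delta)$. On $[0,t_0]$ the trajectory lies in $B$, where $\dot I=\rho I\ge c\,I>0$, using $I>0$ from \Cref{lem:positivity}. Hence $I$ is strictly increasing there, and $I(t)\ge\varepsilon+\delta>\varepsilon$ on $[0,t_0]$. So $T^u(\bar S,\varepsilon+\delta)>t_0$ for every admissible $u$. Taking the infimum gives $V(\bar S,\varepsilon+\delta)\ge t_0$.

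The only step needing care is the uniformity in Step 1: the confinement time must not depend on $u$ or $\delta$. The crude speed bound $M$ handles this. The constraint $\bar S<1-\varepsilon$ guarantees that the starting points lie in $\Omega$ for small $\delta$, since $\bar S+\varepsilon+\delta\le1$. If needed, I would shrink $\varrho$ further so that $\varrho\le 1-\varepsilon-\bar S$. Since $V(\bar S,\varepsilon+\delta)\ge t_0>0=V(\bar z)$ for all small $\delta$, $V$ is discontinuous at $\bar z$.
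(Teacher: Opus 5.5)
Your proposal is correct and follows essentially the same route as the paper: a neighbourhood of $\bar z$ on which $\beta S-\gamma$ is bounded below by a positive constant, a confinement time independent of the control obtained from a uniform speed bound $M$ on $f$, and monotonicity of $I$ on that time interval. The remaining differences are cosmetic: you use a box instead of a ball, take $t_0=r/(2M)$ instead of $r/(4M)$, and argue with $\dot I\ge cI$ directly rather than with $\dot L\ge\eta$ for $L=\log I$.
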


\begin{rem}
The strict inequality $\bar S<1-\varepsilon$ ensures that the perturbed states
$(\bar S,\varepsilon+\delta)$ lie in $\Omega$ for small $\delta$.  At the
corner $\bar S=1-\varepsilon$ one perturbs along the constraint instead,
replacing $(\bar S,\varepsilon+\delta)$ by $(\bar S-\delta,\varepsilon+\delta)$;
the same argument applies, since it uses only that the perturbed
state lies in $B_{r/2}(\bar z)\cap\Omega$ with prevalence exceeding
$\varepsilon$.
\end{rem}

\begin{proof}
Since $\beta,\gamma\in C^1(\overline\Omega)$, by \eqref{eq:outflow} there are
$\eta>0$ and $r>0$ such that $\beta S-\gamma\ge\eta$ on
$B_r(\bar z)\cap\overline\Omega$.  The vector field $f$ is bounded on
$\overline\Omega\times[0,1]$, say by a constant $M>0$, so every admissible trajectory
started in $B_{r/2}(\bar z)$ remains in $B_r(\bar z)$ for
$t\le t_0:=r/(4M)$.  On that time interval, in the variable $L=\log I$,
\[
\dot L=\beta S-\gamma\ \ge\ \eta\ >\ 0 ,
\]
so $I(t)\ge I(0)e^{\eta t}\ge I(0)$.  Set
\[
\varrho:=\min\left\{\frac r2,\frac{1-\varepsilon-\bar S}{2}\right\}>0.
\]
Starting from $(\bar S,\varepsilon+\delta)$ with $0<\delta<\varrho$ we then
have
$I(t)\ge\varepsilon+\delta>\varepsilon$ for all $t\le t_0$ and all
admissible controls, whence $T^u(\bar S,\varepsilon+\delta)>t_0$ for every
$u$, and thus $V(\bar S,\varepsilon+\delta)\ge t_0$.  As $V(\bar z)=0$ by
definition, $V$ is discontinuous at $\bar z$.
\end{proof}

The condition $G>0$ also determines the sign changes of $\rho$ on the target
boundary.

\begin{cor}
\label{cor:boundary_arcs}
Assume \Cref{ass:main} and \Cref{ass:G_pos}, and consider the restriction of
the growth rate \eqref{eq:growth_rate} to the target boundary,
\begin{equation}\label{eq:boundary_growth}
S\ \longmapsto\ \rho(S,\varepsilon)=\beta(S,\varepsilon)\,S-\gamma(S,\varepsilon),
\qquad S\in(0,1-\varepsilon].
\end{equation}
Then $\partial_S\rho(\cdot,\varepsilon)=G(\cdot,\varepsilon)>0$, so
\eqref{eq:boundary_growth} is strictly increasing, and
$\rho(0^+,\varepsilon)=-\gamma(0,\varepsilon)<0$.  Consequently exactly one of
the following holds.
\begin{enumerate}[label=(\roman*),leftmargin=20pt]
\item $\rho(1-\varepsilon,\varepsilon)\le0$.  Then
$\rho(\cdot,\varepsilon)\le0$ on the whole boundary.  If the inequality at
$S=1-\varepsilon$ is strict, the boundary is strictly inward pointing.
\item $\rho(1-\varepsilon,\varepsilon)>0$.  Then $\rho(\cdot,\varepsilon)$ has
a unique zero $S_c\in(0,1-\varepsilon)$, the boundary splits into an
\emph{inflow arc} $\{S<S_c\}$ and an \emph{outflow arc} $\{S>S_c\}$ separated
by the characteristic point $S_c$.  The value function is discontinuous at
each point of the outflow arc by \Cref{prop:V_discontinuous}.
\end{enumerate}
\end{cor}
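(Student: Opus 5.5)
The argument is one-dimensional calculus along the segment $\{I=\varepsilon\}$, combined with \Cref{prop:V_discontinuous} for the final assertion.

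\emph{Step 1: derivative and monotonicity.} Since $\beta,\gamma$ are $C^1$ on a neighbourhood of $\overline\Omega$ by \Cref{ass:main}(A1), the function $S\mapsto\rho(S,\varepsilon)$ is $C^1$ on $[0,1-\varepsilon]$. Its derivative is $\partial_S\rho(S,\varepsilon)=G(S,\varepsilon)$, by the identity $\partial_S\rho=G$ recorded in \eqref{eq:growth_rate}. For $S\in(0,1-\varepsilon]$ the point $(S,\varepsilon)$ lies in $\Omega$, so \Cref{ass:G_pos} gives $G(S,\varepsilon)>0$ there. Hence $\rho(\cdot,\varepsilon)$ is strictly increasing on $(0,1-\varepsilon]$, and by continuity it is nondecreasing on the closed interval.

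\emph{Step 2: the left endpoint.} By continuity up to $S=0$, we have $\rho(0^+,\varepsilon)=\beta(0,\varepsilon)\cdot 0-\gamma(0,\varepsilon)=-\gamma(0,\varepsilon)$. This is negative because $\gamma>0$ on $\overline\Omega$.

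\emph{Step 3: the dichotomy.} The two cases are split according to the sign of $\rho(1-\varepsilon,\varepsilon)$, so they are exhaustive and mutually exclusive.
\begin{itemize}[leftmargin=16pt]
\item In case (i), strict monotonicity gives $\rho(S,\varepsilon)<\rho(1-\varepsilon,\varepsilon)\le0$ for $S<1-\varepsilon$. So $\rho\le0$ on the whole boundary, and $\rho<0$ everywhere if the endpoint inequality is strict. This is the strictly inward pointing condition, since $\dot I=\rho I$.
\item In case (ii), $\rho(\cdot,\varepsilon)$ is continuous on $[0,1-\varepsilon]$, negative near $0$ and positive at $1-\varepsilon$. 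The intermediate value theorem gives a zero $S_c\in(0,1-\varepsilon)$, and strict monotonicity makes it unique, with $\rho<0$ for $S<S_c$ and $\rho>0$ for $S>S_c$. This yields the inflow and outflow arcs.
\end{itemize}

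\emph{Step 4: discontinuity on the outflow arc.} For $\bar S\in(S_c,1-\varepsilon)$, condition \eqref{eq:outflow} holds, so \Cref{prop:V_discontinuous} applies directly. The endpoint $\bar S=1-\varepsilon$ is covered by the corner variant in the remark following that proposition, which perturbs along $(\bar S-\delta,\varepsilon+\delta)$.

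The only delicate points are that \Cref{ass:G_pos} is stated on $\Omega$, which excludes $S=0$, and the corner $S=1-\varepsilon$. Both are handled by the continuity argument and the remark respectively. No step is expected to be a real obstacle.
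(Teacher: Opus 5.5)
Your proposal is correct and follows the paper's proof essentially step for step. You differentiate to get $\partial_S\rho(\cdot,\varepsilon)=G(\cdot,\varepsilon)>0$, use $\gamma>0$ for the negative left limit, and split on the sign of $\rho(1-\varepsilon,\varepsilon)$ using strict monotonicity, making the intermediate value step explicit where the paper leaves it implicit; for the outflow arc you invoke \Cref{prop:V_discontinuous} together with the corner remark, exactly as the paper does.
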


\begin{proof}
Differentiating \eqref{eq:boundary_growth} gives
$\partial_S\rho=\beta+S\partial_S\beta-\partial_S\gamma=G(S,\varepsilon)$,
positive by \Cref{ass:G_pos}; and $\rho(0^+,\varepsilon)<0$ because
$\gamma>0$ on $\overline\Omega$.  A strictly increasing continuous function
with a negative left limit has at most one zero, which gives the dichotomy.
The last assertion in case~(ii) follows from
\Cref{prop:V_discontinuous} and the corner modification stated after that
proposition.
\end{proof}

\begin{rem}
\label{rem:viscosity_scope}
Our convention assigns zero exit time to every initial state in the target.
In contrast, \citet{HyndEradication2021} use a positive-time threshold hitting
convention, so their continuous boundary values on the outflow arc need not
vanish.  The discontinuity above does not contradict their result.
Under the strict inward condition
\[
\beta(S,\varepsilon)S<\gamma(S,\varepsilon)
\qquad\text{for all }S\in[0,1-\varepsilon],
\]
the standard exit time theory gives continuity of $V$ and the viscosity
property \citep[Ch.~IV]{BardiCapuzzoDolcetta1997}.  With an outflow arc,
\Cref{prop:V_discontinuous} prevents a continuous formulation with zero data
on the entire target.  We do not establish a comparison principle,
uniqueness, or semiconcavity for $V$.  The switching theorem uses the maximum
principle, and the switching time representation uses dynamic programming
and the regularity of the fixed-control exit time $W$
(\Cref{lem:W_regularity}).  The feedback formula
$u^*=\mathbf 1_{\{V_S>0\}}$ is interpreted at differentiability points of $V$.
\end{rem}

\subsection{Value after switching and reduction to one variable}
\label{subsec:post_switch}

By \Cref{thm:single_switch}, any optimal control switches at most once from
$u=0$ to $u=1$.
Accordingly, define the post-switch value function
\begin{equation}\label{eq:W_def}
W(S,I)
:=
\inf\bigl\{
t\ge0:\ \pi_I(\Phi^1_t(S,I))\le\varepsilon
\bigr\},
\end{equation}
where $\Phi^1_t$ denotes the flow associated with $u\equiv 1$.  In
particular $W\equiv0$ on the target set $\mathcal G=\{I\le\varepsilon\}$, the
infimum being attained at $t=0$ there, and $W>0$ on
$\Omega\setminus\mathcal G$.

The function $W$ satisfies the exit time identity
\[
W(S,I)=h+W(\Phi^1_h(S,I)),
\qquad 0\le h\le W(S,I).
\]

The function $W$ is an exit time for the fixed smooth vector field
$f(\cdot,1)$.  Transversal exit from \Cref{lem:transversal_exit} allows us to
differentiate $W$ by the implicit function theorem.

\begin{lem}
\label{lem:W_regularity}
Assume \Cref{ass:main} and \Cref{ass:G_pos}.  Then $W$ admits a continuously
differentiable local extension about each point of
$\Omega\setminus\mathcal G$, and it solves
\begin{equation}\label{eq:W_transport}
1+\nabla W\cdot f(\cdot,1)=0
\end{equation}
there in the classical sense, and for every $z_0\in\Omega$ with
$I_0>\varepsilon$, the map
$\tau\mapsto\phi(\tau)=\tau+W(\Phi^0_\tau(z_0))$ is $C^1$ on every interval
on which $\Phi^0_\tau(z_0)\notin\mathcal G$, with
\begin{equation}\label{eq:phi_prime_rigorous}
\phi'(\tau)=1+\nabla W\bigl(\Phi^0_\tau(z_0)\bigr)\cdot f_0\bigl(\Phi^0_\tau(z_0)\bigr).
\end{equation}
\end{lem}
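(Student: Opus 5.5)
The plan is to realize $W$ locally as the solution of an implicit equation and differentiate it with the implicit function theorem. Fix $z_0=(S_0,I_0)\in\Omega\setminus\mathcal G$ and set $t_0:=W(z_0)$. First I need $t_0<\infty$, which \Cref{thm:finite_time_strong} supplies: its proof bounds exactly the exit time under $u\equiv1$. Since $f(\cdot,1)$ is $C^1$ on an open neighbourhood $U$ of $\overline\Omega$ (by (A1)), the flow $(t,z)\mapsto\Phi^1_t(z)$ is $C^1$ on an open subset of $\R\times U$ that contains $[0,t_0]\times\{z_0\}$. Define
\[
\Psi(t,z):=\pi_I\bigl(\Phi^1_t(z)\bigr)-\varepsilon .
\]
Then $\Psi(t_0,z_0)=0$ and
\[
\partial_t\Psi(t_0,z_0)=\dot I(t_0)=\rho\bigl(S(t_0),\varepsilon\bigr)\varepsilon<0,
\]
where the sign comes from \Cref{lem:transversal_exit} applied with $u\equiv1$. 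The implicit function theorem therefore gives a $C^1$ function $\tau(z)$ on a neighbourhood $N$ of $z_0$ with $\tau(z_0)=t_0$ and $\Psi(\tau(z),z)=0$.

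\textbf{Main obstacle.} The hard step is showing that $\tau(z)=W(z)$ for $z\in N\cap\Omega$, i.e.\ that the implicit root really is the \emph{first} hitting time. I would argue by compactness and continuity.
\begin{itemize}
\item On $[0,t_0-\eta]$ we have $\pi_I(\Phi^1_t(z_0))\ge\varepsilon+c$ for some $c>0$, since $I>\varepsilon$ on $[0,t_0)$. By uniform continuity of the flow, the same strict inequality (with $c/2$) holds for $z$ near $z_0$. So no hitting occurs before $t_0-\eta$.
\item On $[t_0-\eta,t_0+\eta]$, shrinking $\eta$ and $N$, we have $\partial_t\Psi<0$. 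Hence $\Psi(\cdot,z)$ is strictly decreasing there and its unique zero is $\tau(z)$.
\end{itemize}
Together these give $W=\tau$ on $N\cap\Omega$, which is the $C^1$ local extension. Points of $N\setminus\Omega$ are harmless because $\tau$ itself is the extension.

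\textbf{Transport equation.} Next I derive \eqref{eq:W_transport}. The identity $W(z)=h+W(\Phi^1_h(z))$ holds for small $h\ge0$; the extension $\tau$ satisfies the same identity for small $h$ of either sign, by the semigroup property. Differentiating at $h=0$ gives $0=1+\nabla W(z)\cdot f(z,1)$, classically.

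\textbf{Derivative of $\phi$.} Finally, for $\phi(\tau)=\tau+W(\Phi^0_\tau(z_0))$, on any interval where $\Phi^0_\tau(z_0)\notin\mathcal G$ the point stays in $\Omega\setminus\mathcal G$. By forward invariance we have $S,I>0$ (\Cref{lem:positivity}), and $W$ is $C^1$ near each such point. The map $\tau\mapsto\Phi^0_\tau(z_0)$ is $C^1$ with derivative $f_0:=f(\cdot,0)$, so the chain rule gives \eqref{eq:phi_prime_rigorous}, and its right-hand side is continuous in $\tau$.

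One side remark: the local extension is well defined near states on the edge $S+I=1$ because $U$ is open; that is why we need $\beta,\gamma$ extended beyond $\overline\Omega$.
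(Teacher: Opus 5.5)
Your proposal is correct and follows essentially the same route as the paper's proof. Both obtain finiteness from \Cref{thm:finite_time_strong}, apply the implicit function theorem to $\pi_I(\Phi^1_t(z))-\varepsilon$ with nondegeneracy supplied by \Cref{lem:transversal_exit} for $u\equiv1$, identify the implicit root with the first hitting time (positive distance before $t_0-\eta$, uniqueness near $t_0$), and then differentiate $W(\Phi^1_h(z))=W(z)-h$ and apply the chain rule. Your strict-monotonicity argument on $[t_0-\eta,t_0+\eta]$ states the paper's ``downward crossing'' step more explicitly.
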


\begin{proof}
Fix $z_0\in\Omega\setminus\mathcal G$ and write
$(S_e,\varepsilon):=\Phi^1_{W(z_0)}(z_0)$ for the exit point under
$u\equiv1$, the exit time being finite by \Cref{thm:finite_time_strong}.
Applying \Cref{lem:transversal_exit} to the constant control $u\equiv1$
gives
\begin{equation}\label{eq:W_transversal}
\rho\bigl(S_e,\varepsilon\bigr)
=\beta(S_e,\varepsilon)S_e-\gamma(S_e,\varepsilon)<0 .
\end{equation}

Set $\Gamma(t,z):=\pi_I\bigl(\Phi^1_t(z)\bigr)-\varepsilon$.  Since
$f(\cdot,1)$ is $C^1$ on a neighbourhood of $\overline\Omega$ by
\Cref{ass:main}(A1), the flow
$\Phi^1$ is $C^1$ jointly in $(t,z)$, hence so is $\Gamma$.  By construction
$\Gamma(W(z_0),z_0)=0$, and
\[
\partial_t \Gamma\bigl(W(z_0),z_0\bigr)
=
\rho\bigl(S_e,\varepsilon\bigr)\,\varepsilon
<0
\]
by \eqref{eq:W_transversal}.  The implicit function theorem then provides a
neighbourhood of $z_0$, a neighbourhood of $W(z_0)$, and a unique $C^1$ map
$z\mapsto\mathcal T(z)$ with
$\mathcal T(z_0)=W(z_0)$ and $\Gamma(\mathcal T(z),z)=0$.  Choose
$\delta\in(0,W(z_0))$ small enough that this is the only zero in
$(W(z_0)-\delta,W(z_0)+\delta)$.  The reference trajectory has positive
distance from $\{I=\varepsilon\}$ on $[0,W(z_0)-\delta]$.  Continuous
dependence excludes a crossing on this interval for nearby initial states,
and \eqref{eq:W_transversal} makes the zero near $W(z_0)$ a downward
crossing.  Thus $\mathcal T=W$ near $z_0$, and
$W\in C^1$ there, with
\[
\nabla W(z_0)
=
-\frac{\nabla_z \Gamma\bigl(W(z_0),z_0\bigr)}
       {\partial_t \Gamma\bigl(W(z_0),z_0\bigr)} .
\]
Differentiating the identity $W(\Phi^1_h(z))=W(z)-h$ at $h=0$ gives
\eqref{eq:W_transport}, and the chain rule applied to
$\tau\mapsto W(\Phi^0_\tau(z_0))$, using
$\frac{\de}{\de\tau}\Phi^0_\tau(z_0)=f_0(\Phi^0_\tau(z_0))$, gives
\eqref{eq:phi_prime_rigorous}.
\end{proof}

All six parameter sets in \Cref{tab:scenarios} have an outflow arc, where
$V$ is discontinuous by \Cref{prop:V_discontinuous}.  Since $W\ge V$ and
both vanish on the target, $W$ is also discontinuous there.  It is
$C^1$ on $\Omega\cap\{I>\varepsilon\}$ by \Cref{lem:W_regularity}.  Over more than two
thousand initial states across these sets, the largest observed value of
$\rho(S_e,\varepsilon)$ at exit was $-1.5\times10^{-2}$.
Method~3 in \Cref{subsec:compute_tau} uses \eqref{eq:phi_prime_rigorous}.

\subsection{Switching time representation}
\label{subsec:switch_representation}

Combining the single switch structure with the dynamic programming
principle yields the following representation.

Before stating it, we establish existence of state trajectories and an
optimal control, so that no
hypothesis beyond \Cref{ass:main} is needed at this point.

\begin{prop}
\label{prop:existence_opt}
Assume \Cref{ass:main}. Then the following hold.
\begin{enumerate}[label=(E\arabic*)]
\item For every $u(\cdot)\in\cA$ and every initial state
$(S,I)\in\overline\Omega$, the controlled system \eqref{eq:SIR} admits a
unique Carath\'eodory solution on $[0,\infty)$, and this solution remains in
$\overline\Omega$ for all $t\ge0$.
\item The target is reachable in finite time from every $(S,I)\in\Omega$
with $I>\varepsilon$; in particular $V(S,I)<\infty$ on $\Omega$.
\item For every $(S,I)\in\Omega$ the infimum in \eqref{eq:V_def} is attained:
there exists $u^*\in\cA$ with $T^{u^*}(S,I)=V(S,I)$.
\end{enumerate}
\end{prop}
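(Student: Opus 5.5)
For (E1) I would treat \eqref{eq:SIR} as a Carath\'eodory system $\dot x=f(x,u(t))$. The vector field is affine in $u$ and $C^1$ in $x$ on a neighbourhood of $\overline\Omega$ by \Cref{ass:main}(A1). It is therefore measurable in $t$, locally Lipschitz in $x$ with a constant independent of $u\in[0,1]$, and bounded on compact sets. This gives local existence and uniqueness of an absolutely continuous solution. To extend it to $[0,\infty)$ I would show a priori that the solution cannot leave $\overline\Omega$. Nonnegativity of $S$ and $I$ follows from the exponential representations in the proof of \Cref{lem:positivity}; those formulas also give $S\equiv0$ or $I\equiv0$ on the faces, which are invariant. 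The bound $S+I\le 1$ follows from $\frac{d}{dt}(S+I)=-uS-\gamma I\le0$. Since $\overline\Omega$ is compact and the solution stays in it, the local solution continues globally.

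For (E2) I would invoke \Cref{thm:finite_time_strong}. The control $u\equiv1$ reaches $\{I\le\varepsilon\}$ within the explicit bound \eqref{eq:Tstar_bound_strong}, so $V<\infty$. On $\mathcal G$ we have $V=0$ trivially.

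For (E3), the substantive step, I would use the direct method with weak-$*$ compactness. This works because the dynamics are affine in the control, so the velocity sets are convex and no relaxation is needed. Take a minimizing sequence $u_n$ with $T_n:=T^{u_n}(z)\downarrow V(z)$, where all $T_n\le \bar T$ by (E2). By Banach--Alaoglu, extract $u_n\rightharpoonup u^*$ weak-$*$ in $L^\infty(0,\bar T+1)$; the limit still takes values in $[0,1]$ a.e. The trajectories $x_n$ are uniformly Lipschitz because $f$ is bounded on $\overline\Omega\times[0,1]$. By Arzel\`a--Ascoli, after a further subsequence, $x_n\to x^*$ uniformly.

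Next I would identify $x^*$ as the trajectory of $u^*$. Write
\[
x_n(t)=z+\int_0^t f_0(x_n)\,\de s-\int_0^t u_n(s)\bigl(S_n(s),0\bigr)\,\de s .
\]
The first integral converges by uniform convergence. For the second, split $u_nS_n=u_n(S_n-S^*)+u_nS^*$. The first piece tends to zero uniformly, and the second converges by weak-$*$ convergence tested against $S^*\mathbf 1_{[0,t]}\in L^1$. By uniqueness in (E1), $x^*$ is therefore the trajectory of $u^*$.

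Finally, $I_n(T_n)\le\varepsilon$, and uniform convergence together with equicontinuity gives $I^*(V(z))\le\varepsilon$. Hence $T^{u^*}(z)\le V(z)$, and the reverse inequality holds by definition of $V$. I would extend $u^*$ arbitrarily, for instance by $0$, beyond the horizon.

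The main obstacle I expect is the passage to the limit in the bilinear term $u_nS_n$, which is why the splitting above is needed. A secondary point is that the first-hitting time is only lower semicontinuous along the sequence. This is harmless here, because we only need $I^*\le\varepsilon$ at some time not exceeding $V(z)$, and not exactness of the hitting time.
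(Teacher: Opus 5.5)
Your proposal is correct. For (E1) and (E2) it follows the paper; using the exponential representations from \Cref{lem:positivity} to show the faces are invariant is, if anything, more careful than the paper's remark that $\dot S=0$ on $\{S=0\}$.

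For (E3) you take a different route. The paper checks the hypotheses of Filippov's existence theorem and cites \citet{Filippov1962}. Those hypotheses are a compact control set, dynamics Lipschitz in $x$ uniformly in $u$, a compact invariant state space, a closed and reachable target, and convex velocity sets $f(x,[0,1])$, which are segments because $f$ is affine in $u$. You instead prove existence directly. You take a minimizing sequence and extract a weak-$*$ limit of the controls and a uniform limit of the trajectories. The splitting $u_nS_n=u_n(S_n-S^*)+u_nS^*$ lets you pass to the limit in the integral equation, uniqueness from (E1) identifies the limit trajectory, and equicontinuity gives $I^*(V(z))\le\varepsilon$.

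The paper's version is shorter. It isolates convexity of the velocity sets as the operative hypothesis, in a form that would survive non-affine dependence on $u$. Yours is self-contained and uses the affine structure more strongly. The weak-$*$ limit of the controls is itself the control generating the limit trajectory, so no measurable selection (Filippov's lemma) or relaxation step is needed. Convexity enters only through the weak-$*$ closedness of $\{0\le u\le1\}$.

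Two small points are worth a phrase each. Sequential extraction relies on separability of $L^1(0,\bar T+1)$. The case $z\in\mathcal G$, where $V(z)=0$ and any control attains it, should be noted separately.
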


\begin{proof}
\emph{(E1).}  By \Cref{ass:main}(A1) the map $x\mapsto f(x,u)$ is $C^1$ on
the compact set $\overline\Omega$ with derivatives bounded uniformly in
$u\in[0,1]$, hence Lipschitz in $x$ with a constant $L$ independent of $u$;
moreover $f$ is affine in $u$ and bounded on $\overline\Omega\times[0,1]$.
Consequently, for measurable $u(\cdot)$ the map $(t,x)\mapsto f(x,u(t))$ is
measurable in $t$, Lipschitz in $x$ uniformly in $t$, and dominated by an
integrable function, so the Carath\'eodory existence and uniqueness theorem
applies and yields a unique local absolutely continuous solution.
Invariance of $\overline\Omega$ follows from the structure of
\eqref{eq:SIR}: on $\{S=0\}$ one has $\dot S=0$ and on $\{I=0\}$ one has
$\dot I=0$, so neither variable can become negative, while
$\frac{\de}{\de t}(S+I)=-uS-\gamma I\le0$ keeps $S+I\le1$.  The solution
therefore remains in the compact set $\overline\Omega$, cannot blow up, and
extends to all of $[0,\infty)$.

\emph{(E2).}  This is \Cref{thm:finite_time_strong}: the admissible control
$u\equiv1$ drives the trajectory into $\mathcal G$ in finite time, and
$V\le T(1;\cdot)$.

\emph{(E3).}  We verify the hypotheses of Filippov's existence theorem for
time-optimal control problems \citep{Filippov1962}; see also
\citet[Ch.~9]{Cesari1983}.  The control set $[0,1]$ is compact; $f$ is
continuous on $\overline\Omega\times[0,1]$ and Lipschitz in $x$ uniformly in
$u$ by (E1); every trajectory issuing from $\overline\Omega$ remains in the
compact set $\overline\Omega$; the target
$\mathcal G=\{(S,I)\in\overline\Omega:\ I\le\varepsilon\}$ is closed; and
$\mathcal G$ is reachable in finite time by (E2).  It remains to check the
structural condition, namely convexity of the velocity set.  Since $f$ is
affine in $u$,
\[
f\bigl(x,[0,1]\bigr)
=
\Bigl\{
\bigl(-\beta SI-uS,\ \beta SI-\gamma I\bigr):\ u\in[0,1]
\Bigr\}
\]
is, for each fixed $x=(S,I)\in\overline\Omega$, the straight segment joining
$\bigl(-\beta SI,\ \beta SI-\gamma I\bigr)$ to
$\bigl(-\beta SI-S,\ \beta SI-\gamma I\bigr)$.  It is therefore convex and
compact for every $x$.  Filippov's theorem now yields an admissible control
$u^*$ attaining the minimal time.
\end{proof}

\begin{rem}
The velocity set is convex because vaccination enters \eqref{eq:SIR}
affinely.  No relaxation of the
control set is required, and the value function of the relaxed problem
coincides with $V$.  Note also that \Cref{prop:existence_opt} is logically
independent of the Pontryagin analysis of \Cref{sec:PMP_switch}: the latter
presupposes an optimal pair, whose existence follows from
\Cref{prop:existence_opt}(E3).
\end{rem}

\begin{thm}
\label{thm:switch_rep}
Assume \Cref{ass:main} and \Cref{ass:G_pos}. For any $(S,I)\in\Omega$,
\begin{equation}\label{eq:V_switch_rep}
V(S,I)
=
\inf_{\tau\ge0}
\Bigl\{
\tau + W\bigl(\Phi^0_\tau(S,I)\bigr)
\Bigr\},
\end{equation}
where $\Phi^0$ is the flow associated with $u\equiv 0$ and
$W$ is the post-switch value under $u\equiv 1$ defined in \eqref{eq:W_def}.
\end{thm}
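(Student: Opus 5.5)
The plan is to prove the two inequalities separately, writing $\phi(\tau):=\tau+W(\Phi^0_\tau(S,I))$. Begin with the trivial case $(S,I)\in\mathcal G$, i.e.\ $I\le\varepsilon$. Here $V(S,I)=0$ by definition, and $\tau=0$ gives $\phi(0)=W(S,I)=0$; since $\phi\ge0$, both sides of \eqref{eq:V_switch_rep} vanish. From now on assume $I>\varepsilon$.

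\emph{Upper bound $V\le\inf_\tau\phi(\tau)$.} This is essentially \Cref{cor:two_policy_bound}. For each $\tau\ge0$ use the admissible control $u_\tau=\mathbf 1_{[\tau,\infty)}$.
\begin{itemize}
\item If the uncontrolled flow does not reach $\mathcal G$ before time $\tau$, the eradication time of $u_\tau$ is exactly $\tau+W(\Phi^0_\tau(S,I))$.
\item If it does reach $\mathcal G$ at some $s\le\tau$, the eradication time is at most $s\le\tau\le\phi(\tau)$.
\end{itemize}
The definition \eqref{eq:V_def} then gives $V\le\phi(\tau)$ for every $\tau$. No structural assumption is needed for this direction.

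\emph{Lower bound $V\ge\inf_\tau\phi(\tau)$.} Here we need an optimal control and its structure.
\begin{itemize}
\item By \Cref{prop:existence_opt}(E3) there is an optimal $u^*$ with $T^{u^*}(S,I)=V(S,I)=:T^*$, and $T^*>0$ because $I>\varepsilon$.
\item By \Cref{prop:normality} and \Cref{thm:single_switch}, $u^*$ agrees a.e.\ on $[0,T^*]$ with $\mathbf 1_{(\tau^*,T^*]}$ for some $\tau^*\in[0,T^*]$.
\item Trajectories depend on the control only through its a.e.\ class (Carath\'eodory solutions, \Cref{prop:existence_opt}(E1)). So on $[0,\tau^*]$ the optimal trajectory is $\Phi^0_t(S,I)$, and on $[\tau^*,T^*]$ it is $\Phi^1_{t-\tau^*}(z^*)$ with $z^*:=\Phi^0_{\tau^*}(S,I)$.
\end{itemize}

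Now split by the value of $\tau^*$.
\begin{itemize}
\item If $\tau^*<T^*$: the optimal trajectory first meets $\mathcal G$ at time $T^*$, so $z^*\notin\mathcal G$. The $u\equiv1$ flow from $z^*$ first hits $\{I\le\varepsilon\}$ after exactly $T^*-\tau^*$, so $W(z^*)=T^*-\tau^*$ and $\phi(\tau^*)=T^*=V(S,I)$.
\item If $\tau^*=T^*$: the uncontrolled flow itself exits at $T^*$, so $z^*\in\mathcal G$. Then $W(z^*)=0$ and again $\phi(\tau^*)=T^*$.
\end{itemize}
In both cases the infimum is at most $V$, and combined with the upper bound it is attained at $\tau^*$.

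The main obstacle is the bookkeeping in the lower bound. One must make sure that the first-hitting time of the concatenated trajectory coincides with $\tau^*+W(z^*)$, including the endpoint cases $\tau^*\in\{0,T^*\}$. One must also make sure that modifying $u^*$ on a null set does not alter the trajectory. Both points follow from first-hitting minimality and the uniqueness in (E1). The applicability of \Cref{thm:single_switch} to the optimal pair is already secured by \Cref{prop:existence_opt} and \Cref{prop:normality}.
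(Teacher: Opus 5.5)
Your proposal is correct and follows the paper's proof. The upper bound uses the single-switch controls $u_\tau$ with the case split on whether the target is hit before $\tau$. The lower bound uses existence (E3) together with \Cref{thm:single_switch} to obtain the exact decomposition $V(z)=\tau^*+W(\Phi^0_{\tau^*}(z))$; your separate handling of $\tau^*=T^*$ and of null-set changes to $u^*$ just spells out what the paper compresses into ``the same decomposition argument (exact because $\tau^*\le T^{u^*}(z)$)''.
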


\begin{proof}
Fix an initial state $z:=(S,I)\in\Omega$.  If $z\in\mathcal G$, both sides of
\eqref{eq:V_switch_rep} are zero by taking $\tau=0$.  We therefore assume
$I>\varepsilon$.

For any $\tau\ge0$, define the single switch control
\[
u_\tau(t)=
\begin{cases}
0,& 0\le t<\tau,\\
1,& t\ge \tau.
\end{cases}
\]
Let $z_\tau:=\Phi^0_\tau(z)$ be the state reached at time $\tau$ under
$u\equiv 0$.
By admissibility of $u_\tau$ and the definition of $V$,
\begin{equation}\label{eq:V_le_Tutau}
V(z)\le T^{u_\tau}(z),
\end{equation}
where $T^{u_\tau}(z)$ is the eradication time under $u_\tau$.

We claim that
\begin{equation}\label{eq:Tu_tau_formula}
T^{u_\tau}(z)\le \tau + W(z_\tau).
\end{equation}
Indeed, if the trajectory under $u_\tau$ hits the target set $\mathcal G$
before time $\tau$, then $T^{u_\tau}(z)\le \tau$ and since $W\ge 0$ we obtain
$T^{u_\tau}(z)\le \tau \le \tau+W(z_\tau)$.
Otherwise, the trajectory has not reached $\mathcal G$ before $\tau$, and
the eradication time decomposes as
\[
T^{u_\tau}(z)=\tau + T^{u\equiv 1}(z_\tau)=\tau+W(z_\tau),
\]
by the definition of $W$ (eradication time under full vaccination from
the current state) and the semigroup property of the flow.
This proves \eqref{eq:Tu_tau_formula}.

Combining \eqref{eq:V_le_Tutau}--\eqref{eq:Tu_tau_formula} yields
\[
V(z)\le \tau + W(\Phi^0_\tau(z))\qquad \text{for all }\tau\ge0.
\]
Taking the infimum over $\tau$ gives
\begin{equation}\label{eq:V_le_inf}
V(z)\le \inf_{\tau\ge0}\{\tau+W(\Phi^0_\tau(z))\}.
\end{equation}

By \Cref{prop:existence_opt}(E3), there exists an optimal control $u^*$
attaining the value $V(z)=T^{u^*}(z)$.
By \Cref{thm:single_switch}, this optimal control is bang-bang with at most
one switch from $0$ to $1$.
Hence there exists $\tau^*\in[0,T^{u^*}(z)]$ such that
\[
u^*(t)=
\begin{cases}
0,& 0\le t<\tau^*,\\
1,& \tau^*\le t\le T^{u^*}(z),
\end{cases}
\qquad\text{a.e.}
\]
Let $z^*_{\tau^*}:=\Phi^0_{\tau^*}(z)$ denote the state reached at time
$\tau^*$ along the pre-switch dynamics.
Then, by the same decomposition argument as above (here it is exact because
$\tau^*\le T^{u^*}(z)$ by construction),
\[
V(z)=T^{u^*}(z)=\tau^*+W(z^*_{\tau^*})=\tau^*+W(\Phi^0_{\tau^*}(z)).
\]
Therefore,
\begin{equation}\label{eq:V_ge_inf}
V(z)\ge \inf_{\tau\ge0}\{\tau+W(\Phi^0_\tau(z))\}.
\end{equation}

Combining \eqref{eq:V_le_inf} and \eqref{eq:V_ge_inf} yields
\eqref{eq:V_switch_rep}.
\end{proof}

\begin{cor}
\label{cor:tau_star}
Assume \Cref{ass:main} and \Cref{ass:G_pos}, fix $z=(S,I)\in\Omega$ with
$I>\varepsilon$, and let
\[
\mathcal T(z):=
\bigl\{\tau\in[0,V(z)]:T^{u_\tau}(z)=V(z)\bigr\},
\qquad
\tau^*(z):=\min\mathcal T(z),
\]
where $u_\tau=0$ before $\tau$ and $u_\tau=1$ after $\tau$.  Then
\begin{equation}\label{eq:value_matching}
\tau^*(z)
=
\min\Bigl\{
t\ge0:\
V\bigl(\Phi^0_t(z)\bigr)=W\bigl(\Phi^0_t(z)\bigr)
\Bigr\}.
\end{equation}
\end{cor}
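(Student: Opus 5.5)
The plan is to prove the two inclusions that pin down the minimum. First, $\tau^*(z)$ belongs to the set
\[
\mathcal A(z):=\{t\ge0:\ V(\Phi^0_t(z))=W(\Phi^0_t(z))\}.
\]
Second, no $t<\tau^*(z)$ belongs to $\mathcal A(z)$. Together these show that $\min\mathcal A(z)$ exists and equals $\tau^*(z)$. The minimum of $\mathcal A(z)$ is never taken directly; this matters because $V$ and $W$ may be discontinuous on the outflow arc (\Cref{prop:V_discontinuous}), so closedness of $\mathcal A(z)$ is not available.

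\emph{Step 0: $\tau^*(z)$ is well defined.} By \Cref{prop:existence_opt}(E3) an optimal control exists. By \Cref{thm:single_switch} it coincides a.e.\ with some $u_{\bar\tau}$ with $\bar\tau\in[0,V(z)]$. Trajectories depend only on the a.e.\ class of the control, so $\bar\tau\in\mathcal T(z)$ and $\mathcal T(z)\neq\emptyset$. For attainment of the minimum, I would argue as follows.
\begin{itemize}
\item The map $\tau\mapsto x^{u_\tau}$ is continuous uniformly on compact time intervals, because $\|u_\tau-u_{\tau'}\|_{L^1}=|\tau-\tau'|$ and $f$ is Lipschitz in $x$ uniformly in $u$; then apply Gronwall.
\item The first hitting time of the closed set $\mathcal G$ is lower semicontinuous under uniform convergence of trajectories.
\item Hence $\{\tau: T^{u_\tau}(z)\le V(z)\}$ is closed. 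Since $T^{u_\tau}(z)\ge V(z)$ always, this set equals $\{\tau: T^{u_\tau}(z)=V(z)\}$.
\item Intersecting with $[0,V(z)]$ gives that $\mathcal T(z)$ is compact, so its minimum exists.
\end{itemize}

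\emph{Step 1: every $\tau\in\mathcal T(z)$ lies in $\mathcal A(z)$.} Fix $\tau\in\mathcal T(z)$.
\begin{itemize}
\item If $\tau=V(z)$: on $[0,\tau)$ the $u_\tau$-trajectory is $\Phi^0_t(z)$ and has not entered $\mathcal G$. Hence $\Phi^0_\tau(z)$ is its first hitting point, lies in $\mathcal G$, and $V=W=0$ there.
\item If $\tau<V(z)$: $u_\tau$ is optimal and its trajectory passes through $\Phi^0_\tau(z)$ at time $\tau$. \Cref{prop:optimality_principle} gives $V(\Phi^0_\tau(z))=V(z)-\tau$. The decomposition used in the proof of \Cref{thm:switch_rep} gives $W(\Phi^0_\tau(z))=T^{u_\tau}(z)-\tau=V(z)-\tau$, since there is no hit before $\tau$.
\end{itemize}
In either case $\tau\in\mathcal A(z)$, and in particular $\tau^*(z)\in\mathcal A(z)$.

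\emph{Step 2: no $t<\tau^*(z)$ lies in $\mathcal A(z)$.} Suppose $t<\tau^*(z)$ and $t\in\mathcal A(z)$.
\begin{itemize}
\item Along the optimal control $u_{\tau^*(z)}$ the state at time $t$ is $\Phi^0_t(z)\notin\mathcal G$, since $t<V(z)$.
\item \Cref{prop:optimality_principle} gives $V(\Phi^0_t(z))=V(z)-t$.
\item Because $t\in\mathcal A(z)$, $W(\Phi^0_t(z))=V(z)-t$.
\item The control $u_t$ follows $\Phi^0$ up to time $t$ without hitting $\mathcal G$, then runs $u\equiv1$. Hence $T^{u_t}(z)=t+W(\Phi^0_t(z))=V(z)$.
\item Therefore $t\in\mathcal T(z)$, contradicting the minimality of $\tau^*(z)$.
\end{itemize}
Combining Steps 1 and 2 yields \eqref{eq:value_matching}, with the minimum attained.

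The main obstacle is Step 0, namely the attainment of $\min\mathcal T(z)$, which needs the lower semicontinuity of the hitting time in the switching parameter. If the statement is read as presupposing that the minimum exists, Steps 1 and 2 are short consequences of \Cref{prop:optimality_principle}, \Cref{thm:single_switch}, and the exit-time identity for $W$.
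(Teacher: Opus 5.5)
Your proposal is correct, and Steps 1 and 2 are essentially the paper's argument. The paper also shows $\tau^*(z)\in\mathcal A(z)$ by combining \Cref{prop:optimality_principle} along the pre-switch arc with the decomposition $V(z)=\tau^*(z)+W(\Phi^0_{\tau^*(z)}(z))$. It excludes any earlier $t\in\mathcal A(z)$ by showing that $u_t$ would then be optimal. Like you, it never needs closedness of $\mathcal A(z)$. Your separate treatment of $\tau=V(z)$ is a small added precision; the paper's decomposition covers that case implicitly, since $\Phi^0_{V(z)}(z)\in\mathcal G$ there.

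The genuine difference is in how you show that $\min\mathcal T(z)$ is attained. The paper proves that $F(t)=t+W(\Phi^0_t(z))$ is continuous on $[0,V(z)]$. On $[0,V(z))$ this follows from the $C^1$ regularity of $W$ off the target (\Cref{lem:W_regularity}). At the endpoint, if the uncontrolled trajectory exits at $V(z)$, it uses transversal exit (\Cref{lem:transversal_exit}) and continuous dependence to get $W(\Phi^0_t(z))\to0$. Then $\mathcal T(z)=\{t\in[0,V(z)]:F(t)=V(z)\}$ is a level set of a continuous function, hence compact.

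You instead use only two facts: trajectories depend on the switching time in an $L^1$-Lipschitz way, and the first hitting time of the closed set $\mathcal G$ is lower semicontinuous. You then observe that, because $V(z)$ is the minimal value, the level set $\{T^{u_\tau}(z)=V(z)\}$ coincides with a closed sublevel set.

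Your route is softer and more robust. For this step it needs neither the implicit function theorem nor the structural condition $G>0$; that condition enters only through nonemptiness via \Cref{thm:single_switch}. It also needs no separate endpoint analysis. The paper's route proves more: the switching objective $\phi$ is continuous on $[0,V(z)]$, and even $C^1$ before the uncontrolled exit. The later minimization and derivative-based numerical methods rely on that stronger regularity.
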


\begin{proof}
Write $z_t:=\Phi^0_t(z)$,
$F(t):=t+W(z_t)$, and $A:=\{t\ge0:\ V(z_t)=W(z_t)\}$.  Since $u\equiv1$ is
admissible, $V\le W$ on $\Omega$.

The set $\mathcal T(z)$ is nonempty by
\Cref{prop:existence_opt}(E3) and \Cref{thm:single_switch}.  The uncontrolled
trajectory cannot reach $\mathcal G$ before $V(z)$.  Hence
\Cref{lem:W_regularity}
implies that $F$ is continuous on $[0,V(z))$.  If the uncontrolled trajectory
reaches $\mathcal G$ at $V(z)$, \Cref{lem:transversal_exit} and continuous
dependence imply $W(z_t)\to0$ as $t\uparrow V(z)$.  Thus $F$ is continuous on
$[0,V(z)]$.  On this interval $T^{u_t}(z)=F(t)$, so
$\mathcal T(z)=\{t\in[0,V(z)]:F(t)=V(z)\}$ is compact.  Thus $\tau^*(z)$ is
well defined.

Along the pre-switch arc the optimal control vanishes, so the trajectory of
the optimal pair coincides with $t\mapsto z_t$ on $[0,\tau^*(z)]$, and
\Cref{prop:optimality_principle} gives
\begin{equation}\label{eq:V_along_preswitch}
V(z_t)=V(z)-t,
\qquad 0\le t\le \tau^*(z).
\end{equation}

\emph{$\tau^*(z)\in A$.}  After the switch the optimal control is
$u\equiv1$, so the remaining time to reach $\mathcal G$ from
$z_{\tau^*(z)}$ equals $W(z_{\tau^*(z)})$, whence
$V(z)=\tau^*(z)+W(z_{\tau^*(z)})$.  Combining with
\eqref{eq:V_along_preswitch} at $t=\tau^*(z)$ yields
$W(z_{\tau^*(z)})=V(z)-\tau^*(z)=V(z_{\tau^*(z)})$.

\emph{Minimality.}  Suppose $t\in A$ with $t<\tau^*(z)$.  Then, using
\eqref{eq:V_along_preswitch},
\[
t+W(z_t)=t+V(z_t)=t+\bigl(V(z)-t\bigr)=V(z),
\]
so the single switch control $u_t$ is optimal and $t$ is an optimal
switching time strictly smaller than $\tau^*(z)$, contradicting its
minimality.  Hence $\min A=\tau^*(z)$.
\end{proof}

\begin{rem}
The word \emph{earliest} is needed if
$\tau\mapsto \tau+W(\Phi^0_\tau(z))$ is minimized on a nondegenerate interval.
Every point of that interval is an optimal switching time and lies in $A$.
Equation \eqref{eq:value_matching} selects the left endpoint, as does the
event detection procedure in Method~4.
\end{rem}

\begin{rem}
At points where $V$ is differentiable, the feedback law associated with
\eqref{eq:HJB_formal} satisfies
$u^*(S,I)=\mathbf 1_{\{V_S(S,I)>0\}}$.
Thus the switching time characterization above is consistent with the Pontryagin switching function $\psi=-\lambda_S S$ derived in
\Cref{sec:PMP_switch}.
\end{rem}

\section{Numerical experiments}\label{sec:numerics}

This section illustrates the single switch structure predicted by
\Cref{thm:single_switch}, validates the dynamic programming switching
representation \eqref{eq:V_switch_rep}, compares the observed optimal
eradication time against the explicit upper bounds of
\Cref{thm:finite_time_strong}, and examines numerical cases in which the
condition $G>0$ fails.
All computations are carried out on the invariant set $\Omega$.

\subsection{Nonlinear scenarios and structural conditions}
\label{subsec:numerics_scenarios}
\label{subsec:G_verification}

We consider the three nonlinear configurations of \Cref{tab:scenarios}.
They are phenomenological test families chosen to examine the structural
condition, rather than calibrated models of a particular epidemic.
All three have $\gamma_S=0$, so the experiments isolate changes in $G$
caused by susceptible dependence of transmission.  The theorem also allows
recovery to depend on $S$ and transmission to decrease with $I$, provided
$\beta+I\partial_I\beta>0$.  The present numerical families all satisfy the
stronger condition $\partial_I\beta\ge0$; the decreasing case is illustrated
analytically in \Cref{rem:saturating_incidence}.
Each is examined in two regimes that exhibit
qualitatively different optimal policies.

\begin{table}[!htbp]
\centering
\setlength{\tabcolsep}{3pt}
\caption{The three nonlinear transmission rates and their parameters in both
operating regimes.  In every case the recovery rate is
$\gamma(S,I)=\gamma_0+cI$, and $m=n=1$.  All six parameter sets satisfy the
structural condition $G>0$ on all of $\Omega$
(\Cref{subsec:G_verification}); sets chosen to violate $G>0$ are studied
in \Cref{subsec:sharpness}.}
\label{tab:scenarios}
\begin{tabular}{@{}l l l l@{}}
\hline
\textbf{Scenario} & $\beta(S,I)$ & \textbf{Regime I} & \textbf{Regime II}\\
\hline
\textsc{(A) Saturating} &
$\beta_0\dfrac{1+aI}{1+bS}$ &
$\beta_0=3.0,\ \gamma_0=0.2,$ &
$\beta_0=64,\ \gamma_0=12,$ \\
\textsc{\phantom{(A)} contact} & &
$a=2.5,\ b=1.5,\ c=0.5$ &
$a=2.5,\ b=1.5,\ c=2.0$ \\[4pt]

\textsc{(B) Exponential} &
$\beta_0\exp(\alpha I-\sigma S)$ &
$\beta_0=2.8,\ \gamma_0=0.15,$ &
$\beta_0=40,\ \gamma_0=12,$ \\
\textsc{\phantom{(B)} transmission} & &
$\alpha=3.0,\ \sigma=0.9,\ c=0.4$ &
$\alpha=3.0,\ \sigma=0.3,\ c=2.0$ \\[4pt]

\textsc{(C) Power law} &
\makecell[l]{$\beta_0(1+k_I I)^{m}$\\$\quad{}\times(1-k_S S)^{n}$} &
$\beta_0=2.5,\ \gamma_0=0.12,$ &
$\beta_0=40,\ \gamma_0=12,$ \\
\textsc{\phantom{(C)} heterogeneity} & &
$k_I=2,\ k_S=0.4,\ c=0.35$ &
$k_I=2,\ k_S=0.25,\ c=2.0$ \\ \hline
\end{tabular}

\end{table}

\begin{table}[!htbp]
\centering
\caption{Two dimensionless quantities evaluated at the initial state: the
vaccination to recovery timescale ratio
$\gamma_{\mathrm{init}}$ in \eqref{eq:gamma_init_meaning}, and the reproduction ratio
$R_{\mathrm{eff}}=\beta(S_0,I_0)S_0/\gamma(S_0,I_0)$.  Regime~II has
$\gamma_{\mathrm{init}}\approx12$ and $R_{\mathrm{eff}}\approx2$--$2.5$.
Regime~I has a campaign faster than recovery
($\gamma_{\mathrm{init}}<1$) and an initial reproduction ratio of six
or more.  The corresponding switching behavior is examined in
\Cref{subsec:significance}.}
\label{tab:dimensionless}
\begin{tabular}{l c c c c}
\hline
& \multicolumn{2}{c}{\textbf{Regime I}} & \multicolumn{2}{c}{\textbf{Regime II}}\\
Scenario & $\gamma_{\mathrm{init}}$ & $R_{\mathrm{eff}}$ &
$\gamma_{\mathrm{init}}$ & $R_{\mathrm{eff}}$ \\ \hline
\textsc{(A)} & $0.205$ & $5.98$  & $12$ & $2.12$ \\
\textsc{(B)} & $0.154$ & $7.61$  & $12$ & $2.45$ \\
\textsc{(C)} & $0.124$ & $12.35$ & $12$ & $2.48$ \\ \hline
\end{tabular}

\end{table}

\paragraph{Time units.}
Because the control is normalized to $u\in[0,1]$ and enters as $-uS$, time is
implicitly measured in units of the inverse maximal vaccination rate:
vaccination alone at $u\equiv1$ reduces $S$ by a factor $e$ in one time unit.
Define $\gamma_{\mathrm{init}}:=\gamma(S_0,I_0)$.  This is the dimensionless
ratio
\begin{equation}\label{eq:gamma_init_meaning}
\gamma_{\mathrm{init}}
\;=\;
\frac{\text{time for vaccination alone to reduce $S$ by a factor $e$}}
     {\text{inverse recovery rate at the initial state}} .
\end{equation}
The second dimensionless quantity is the reproduction ratio at the initial
state,
\begin{equation}\label{eq:Reff_def}
R_{\mathrm{eff}}:=\frac{\beta(S_0,I_0)\,S_0}{\gamma(S_0,I_0)}.
\end{equation}
These two ratios influence the qualitative result.  If the initial inverse
recovery rate is five days, $\gamma_{\mathrm{init}}=12$ corresponds to
a vaccination timescale of about two months.  The value
$\gamma_{\mathrm{init}}=0.2$ corresponds to about one day.

We use $R_{\mathrm{eff}}$ rather than $R_0$ because the classical
basic reproduction number \citep{DiekmannHeesterbeekMetz1990} is defined
through invasion near a disease free state.  It can also be defined for
state dependent rates, whereas
here $\beta$ and $\gamma$ are evaluated at the actual initial state
$(S_0,I_0)$ with $I_0>0$, so \eqref{eq:Reff_def} is an initial
\emph{effective} reproduction ratio which moreover varies along the
trajectory.  It is used below only at the initial state.

\paragraph{Regime II: slow campaign and low initial prevalence.}
Here $(S_0,I_0)=(0.99,10^{-5})$, $\varepsilon=10^{-8}$ and
$\gamma_{\mathrm{init}}\approx12$, so
the campaign is twelve times slower than recovery.  The initial prevalence is
of order $10^{-5}$, and $R_{\mathrm{eff}}$ lies between $2.1$ and $2.5$ in the
three scenarios.
This regime is used to study an interior switching time.
\Cref{subsec:significance} examines the effects of $\gamma_0$ and
$R_{\mathrm{eff}}$ separately.

\paragraph{Regime I: fast campaign and higher initial prevalence.}
Here $(S_0,I_0)=(0.99,10^{-2})$, $\varepsilon=10^{-6}$ and
$\gamma_{\mathrm{init}}\approx0.12$ to $0.21$.  By
\eqref{eq:gamma_init_meaning}, the maximal vaccination rate is approximately
five to eight times the initial recovery rate.  This regime
realizes the endpoint case $\tau^*=0$ of \Cref{thm:single_switch}.  It is
interpreted here as a limiting idealization rather than as the primary case.

\begin{rem}
\label{rem:expectation}
\Cref{thm:single_switch} predicts a bang-bang control with at most one
switch, from $u=0$ to $u=1$.  It does \emph{not} predict that the switch is
interior: $\tau^*=0$, i.e.\ immediate full vaccination, is an admissible
instance of the theorem.  Which of the two occurs is determined by the global
minimum of $\phi$.  By \Cref{prop:phi_prime_identity}, the condition
$\partial_S W(S_0,I_0)<0$ rules out $\tau^*=0$, but the converse need not
hold.  The computations give the endpoint case $\tau^*=0$ in Regime~I and an
interior switch in Regime~II.

In the Regime~II examples, delayed vaccination is accompanied by substantial
susceptible depletion through infection before the switch.  In Regime~I,
vaccination acts more rapidly relative to recovery, and immediate
vaccination is the computed optimum.  The two regimes
therefore illustrate the endpoint and interior cases of the theorem.
\Cref{subsec:significance} identifies the parameter range associated with the
interior case.
\end{rem}

\paragraph{Verification of the structural condition.}

For these scenarios we verify the structural condition
$G(S,I)>0$ of \Cref{ass:G_pos} for the models used in the
numerical experiments, thereby determining where the single switch result
(\Cref{thm:single_switch}) applies.  Recall from \eqref{eq:G_def} that
\[
G(S,I)=\beta(S,I)+S\,\partial_S\beta(S,I)-\partial_S\gamma(S,I)
=\partial_S\bigl(\beta(S,I)\,S-\gamma(S,I)\bigr).
\]
In all three scenarios $\gamma$ depends on $I$ only, so $\partial_S\gamma=0$
and $G=\partial_S(\beta S)$.

\paragraph{(A) Saturating contact rate}

\begin{lem}\label{lem:F_scenarioA}
For $\beta(S,I)=\beta_0\dfrac{1+aI}{1+bS}$ and $\gamma(S,I)=\gamma_0+cI$
with $\beta_0>0$ and $a,b\ge 0$, one has
\[
G(S,I)=\frac{\beta_0(1+aI)}{(1+bS)^2}>0\qquad\text{on }\Omega.
\]
Hence the structural inequality $G>0$ holds throughout $\Omega$.
\end{lem}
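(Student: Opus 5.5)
The plan is a direct computation from the definition \eqref{eq:G_def}. Since $\gamma(S,I)=\gamma_0+cI$ does not depend on $S$, we have $\partial_S\gamma=0$. Hence $G=\partial_S(\beta S)$, and only the transmission term needs to be differentiated.

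First write
\[
\beta(S,I)\,S=\beta_0(1+aI)\,\frac{S}{1+bS}.
\]
Here $I$ is held fixed. By the quotient rule,
\[
\partial_S\frac{S}{1+bS}=\frac{(1+bS)-bS}{(1+bS)^2}=\frac{1}{(1+bS)^2},
\]
so $G(S,I)=\beta_0(1+aI)/(1+bS)^2$. As a cross-check, one can instead expand $\beta+S\partial_S\beta$ with $\partial_S\beta=-b\beta_0(1+aI)/(1+bS)^2$. This gives $\beta_0(1+aI)\bigl[(1+bS)-bS\bigr]/(1+bS)^2$, which is the same expression.

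For positivity, note that on $\Omega$ we have $S,I\in(0,1]$, and $a,b\ge0$, $\beta_0>0$. Hence $1+aI\ge1$ and $1+bS\ge1$, so the numerator is at least $\beta_0>0$ and the denominator is finite and positive. The bound holds uniformly, in fact on all of $\overline\Omega$. It follows that \Cref{ass:G_pos} is satisfied.

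The argument has no real obstacle. The only point requiring care is that the differentiation treats $I$ as a constant, and that $\gamma_S=0$, so no recovery contribution enters $G$.
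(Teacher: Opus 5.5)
Your proof is correct and matches the paper's: both use $\partial_S\gamma=0$, write $\beta S=\beta_0(1+aI)\,S/(1+bS)$, and differentiate $S/(1+bS)$ to obtain $(1+bS)^{-2}$. Positivity then follows immediately from $\beta_0>0$ and $a,b\ge0$.
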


\begin{proof}
Since $\beta S=\beta_0(1+aI)\,\dfrac{S}{1+bS}$ and
$\partial_S\!\bigl(\tfrac{S}{1+bS}\bigr)=(1+bS)^{-2}$, the formula follows;
positivity is immediate.
\end{proof}

\paragraph{(B) Exponential transmission}

\begin{lem}\label{lem:F_scenarioB}
For $\beta(S,I)=\beta_0 e^{\alpha I-\sigma S}$ and $\gamma(S,I)=\gamma_0+cI$
with $\beta_0>0$ and $\alpha,\sigma\ge 0$, one has
\[
G(S,I)=\beta(S,I)\,(1-\sigma S).
\]
Thus $G>0$ if and only if $\sigma S<1$.  In particular, this inequality
holds on all of $\Omega$ if and only if $\sigma\le 1$, and otherwise only on the
subregion $\{S<1/\sigma\}$.
\end{lem}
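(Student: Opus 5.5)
Because $\gamma(S,I)=\gamma_0+cI$ does not depend on $S$, we have $\partial_S\gamma=0$. The definition \eqref{eq:G_def} then reduces to $G=\partial_S(\beta S)$, and the plan is to compute this derivative directly, as in \Cref{lem:F_scenarioA}. Write
\[
\beta S=\beta_0 e^{\alpha I}\,S e^{-\sigma S}.
\]
The factor $\beta_0e^{\alpha I}$ does not depend on $S$, and $\partial_S(Se^{-\sigma S})=e^{-\sigma S}(1-\sigma S)$. Hence
\[
G(S,I)=\beta_0e^{\alpha I-\sigma S}(1-\sigma S)=\beta(S,I)(1-\sigma S).
\]

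For the sign statement, note that $\beta>0$ everywhere because $\beta_0>0$ and the exponential is positive. Therefore $G$ has the sign of $1-\sigma S$, so $G>0$ if and only if $\sigma S<1$. This gives the pointwise equivalence immediately.

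The last step is to describe when this holds on the whole region. The $S$-projection of $\Omega$ is $(0,1)$: the value $S=1$ would force $I=0$, which is excluded since $I>0$ in $\Omega$. Every $S\in(0,1)$ is attained, for instance at $(S,\min\{1-S,\tfrac12\})$, or any positive $I\le 1-S$. The condition therefore holds on all of $\Omega$ if and only if $\sigma S<1$ for every $S\in(0,1)$, that is, $\sigma\le1$. If $\sigma\le1$ and $S<1$, then $\sigma S<1$, including the case $\sigma=0$. Conversely, if $\sigma>1$, the point $S=1/\sigma\in(0,1)$ lies in the projection and gives $G=0$, and points with $S>1/\sigma$ give $G<0$. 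So when $\sigma>1$ the inequality holds exactly on the subregion $\{S<1/\sigma\}$.

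The main point requiring care is this endpoint of the $S$-range: the boundary case $\sigma=1$ is included only because $S=1$ is not attained in $\Omega$ with $I>0$. Everything else is routine differentiation.
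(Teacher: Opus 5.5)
Your proposal is correct and follows the paper's proof: compute $\partial_S(\beta S)=\beta(1-\sigma S)$ using $\partial_S\gamma=0$, then read off the sign from $\beta>0$. Your remark that the $S$-projection of $\Omega$ is $(0,1)$, which is what admits the borderline case $\sigma=1$, makes explicit a step the paper leaves implicit.
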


\begin{proof}
$\partial_S(\beta S)=\beta_0 e^{\alpha I-\sigma S}(1-\sigma S)=\beta(1-\sigma S)$,
and $\beta>0$.
\end{proof}

\paragraph{(C) Power law heterogeneity}

\begin{lem}\label{lem:F_scenarioC}
For $\beta(S,I)=\beta_0(1+k_I I)^m(1-k_S S)^n$ and $\gamma(S,I)=\gamma_0+cI$
with $\beta_0>0$, $k_I\ge 0$, $0\le k_S<1$, and $m,n\ge 1$, one has
\[
G(S,I)=\beta_0(1+k_I I)^m(1-k_S S)^{n-1}\bigl(1-(n+1)k_S S\bigr).
\]
If $k_S=0$, then $G>0$ on $\Omega$.  If $k_S>0$, then $G>0$ if and only if
$S<1/((n+1)k_S)$.  Thus the structural inequality holds on all of $\Omega$
if and only if $(n+1)k_S\le1$.
\end{lem}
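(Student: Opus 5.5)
The plan is to follow the same route as \Cref{lem:F_scenarioA,lem:F_scenarioB}. Since $\gamma$ depends on $I$ only, $\partial_S\gamma=0$ and $G=\partial_S(\beta S)$. The whole computation therefore reduces to differentiating the one-variable function $S\mapsto S(1-k_SS)^n$ at fixed $I$ and factoring the result.

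First, by the product rule,
\[
\partial_S\bigl(S(1-k_SS)^n\bigr)
=(1-k_SS)^n-nk_SS(1-k_SS)^{n-1}
=(1-k_SS)^{n-1}\bigl(1-k_SS-nk_SS\bigr).
\]
The last factor equals $1-(n+1)k_SS$. Multiplying by the $S$-independent factor $\beta_0(1+k_II)^m$ gives the stated formula for $G$. The hypothesis $n\ge1$ keeps the exponent $n-1$ nonnegative, so the factorization is valid even where one might worry about $(1-k_SS)^{n-1}$ at $n=1$.

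Second, I would do the sign analysis. On $\Omega$ we have $0<S\le1$ and $0\le k_S<1$, so $1-k_SS\ge1-k_S>0$. Also $1+k_II\ge1$ and $\beta_0>0$. Hence every factor except the last is strictly positive, and $\operatorname{sign}G=\operatorname{sign}\bigl(1-(n+1)k_SS\bigr)$.
\begin{itemize}
\item If $k_S=0$, the last factor equals $1$, so $G>0$.
\item If $k_S>0$, then $G>0$ exactly when $S<1/((n+1)k_S)$.
\end{itemize}

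Finally, I would check the global statement. The set of $S$-values attained in $\Omega$ is $(0,1]$, since for example $(1,0^+)$ is approached but any $S\le1$ is realized with small $I>0$ and $S+I\le1$. The precise range is $S\in(0,1)$ when $I>0$, with sup equal to $1$. The condition $S<1/((n+1)k_S)$ for all such $S$ is then equivalent to $(n+1)k_S\le1$, because the sup $S\to1^-$ is not attained in $\Omega$ where $I>0$.

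This endpoint bookkeeping is the only delicate point. With $\Omega\subset(0,1]^2$ and $S+I\le1$, we have $S<1$, so the non-strict inequality $(n+1)k_S\le1$ is the correct condition. I expect no further obstacle beyond this endpoint check.
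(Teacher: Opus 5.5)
Your proof is correct and follows the paper's argument. It uses the same product-rule factorization of $\partial_S\bigl[S(1-k_SS)^n\bigr]$, and the same observation that $(1-k_SS)^{n-1}>0$ because $k_SS<1$ on $\Omega$. Your explicit check that $I>0$ forces $S<1$ on $\Omega$ fills in an endpoint step the paper leaves implicit, and it shows that the non-strict condition $(n+1)k_S\le1$ is exactly right. You should delete the initial claim that the $S$-range is $(0,1]$, since you correct it to $(0,1)$ a line later.
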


\begin{proof}
$\partial_S\!\bigl[S(1-k_S S)^n\bigr]
=(1-k_S S)^{n-1}\bigl[(1-k_S S)-nk_S S\bigr]
=(1-k_S S)^{n-1}\bigl(1-(n+1)k_S S\bigr)$,
and $(1-k_S S)^{n-1}>0$ since $k_S S<1$ on $\Omega$.
\end{proof}

\begin{rem}\label{rem:scenarios_covered}
The parameters of \Cref{tab:scenarios} satisfy \Cref{ass:main}.  The
structural inequality also holds on all of $\Omega$: scenario~(A) by
\Cref{lem:F_scenarioA}; scenario~(B) because $\sigma=0.9$ and $0.3$ in
Regimes~I and~II, respectively, both at most one; and scenario~(C) because
$(n+1)k_S=0.8$ and $0.5$, respectively, also at most one.  Consequently
\Cref{thm:single_switch} applies directly to all six parameter sets.
\end{rem}

\begin{rem}\label{rem:sharpness_setup}
The thresholds identified in
\Cref{lem:F_scenarioB}--\Cref{lem:F_scenarioC} are explicit, so the
structural condition can also be violated deliberately.  In the numerical
test we use
\[
(\beta_0,\alpha,\sigma,\gamma_0,c)=(48,3,1.5,5,2)
\]
in scenario~(B), for which $G>0$ exactly when $S<2/3$, and
\[
(\beta_0,k_I,k_S,m,n,\gamma_0,c)=(32,2,0.67,1,1,5,2)
\]
in scenario~(C), for which $G>0$ exactly when $S<1/1.34\approx0.746$.
With $S_0=0.99$, both
trajectories start in $\{G<0\}$ and later enter $\{G>0\}$.
\Cref{subsec:sharpness} examines the resulting controls.
\end{rem}

\subsection{Numerical methods}
\label{subsec:compute_tau}
\label{subsec:discretization}

\paragraph{Change of variables.}
As indicated above, we integrate in $(S,L)$ with $L:=\log I$, so that
\eqref{eq:SIR} becomes
\begin{equation}\label{eq:SIR_log}
\dot S=-\beta(S,e^{L})\,S\,e^{L}-u\,S,
\qquad
\dot L=\beta(S,e^{L})\,S-\gamma(S,e^{L}).
\end{equation}
This change of variables has two numerical advantages.  The prevalence spans
several orders of magnitude between the
initial datum and the threshold, and in the original variables the accuracy
near the target would be governed by the absolute tolerance rather than the
relative one; in $L$ the resolution is uniform across decades.  Moreover the
target set becomes the hyperplane $\{L=\log\varepsilon\}$, so that the
eradication time is the root of an affine function of the state.

\paragraph{Forward solver and event detection.}
We integrate \eqref{eq:SIR_log} with the automatic stiff/nonstiff switching
scheme of \citet{Petzold1983} as implemented by \citet{Virtanen2020}, at
relative tolerance $10^{-11}$ and absolute tolerance
$10^{-13}$, and locate the crossing of $\{L=\log\varepsilon\}$ by a terminal
event, that is, by root finding on the dense interpolant of the solution
rather than by interpolating linearly between fixed output times.  This
avoids the error of that linear interpolation; integration and dense
interpolation errors remain and can affect the computed switching time.

\paragraph{Positivity.}
For finite $L$, the reconstruction $I=e^L$ is positive in exact arithmetic.
The differential equation preserves $S>0$, since the $S$ component of
\eqref{eq:SIR_log} vanishes at $S=0$.  This does not give a discrete
positivity guarantee for LSODA.  No state clipping is used in the code.


We compare direct minimization, a Pontryagin consistency check, a derivative
criterion, and a value matching diagnostic.  The latter shares the
minimization used by the first method.

\subsubsection{Method 1: direct minimization}

Using \Cref{thm:switch_rep}, the optimal eradication time satisfies
\[
V(S_0,I_0)=\inf_{\tau\ge 0}\phi(\tau),\qquad
\phi(\tau):=\tau+W(\Phi^0_\tau(S_0,I_0)).
\]
Numerically, we proceed as follows:

\begin{enumerate}[label=(M1-\arabic*), leftmargin=40pt]
\item For a grid of candidate switching times
$\tau_k=k\,\Delta\tau$, $k=0,1,\dots,K$,
simulate the pre-switch flow $\Phi^0_{\tau_k}(S_0,I_0)$ under $u\equiv 0$.
\item From each state $z_k:=\Phi^0_{\tau_k}(S_0,I_0)$, simulate the
post-switch dynamics under $u\equiv 1$ until $I(t)\le\varepsilon$, which
yields $W(z_k)$.
\item Evaluate $\phi(\tau_k)=\tau_k+W(z_k)$ and select
\[
k_*\in\argmin_{0\le k\le K}\phi(\tau_k),
\qquad \tau_{\mathrm{grid}}=\tau_{k_*}.
\]
\end{enumerate}
We refine $\tau_{\mathrm{grid}}$ and other local
minima resolved above the numerical noise floor by bounded Brent
minimization.  All resulting candidates and both endpoints are compared.

In every parameter set satisfying \Cref{ass:G_pos} that we examined, the
global minimum of $\phi$ occurred either at $\tau=0$ or at one interior point
(\Cref{fig:phi}).  \Cref{thm:single_switch} constrains the form of an
optimal control but does not imply unimodality of the switching time
objective.
Where
\Cref{ass:G_pos} fails, minimizing $\phi$ considers only the single switch
family and therefore need not identify an optimal control
(\Cref{subsec:sharpness}).

\subsubsection{Method 2: Pontryagin consistency}
\label{subsubsec:method2}

Alternatively, we solve the coupled state and adjoint system:
forward in time for $(S,I)$, backward in time for the adjoint with
transversality at $T$, and compute the switching function
$\psi(t)=-\lambda_S(t)S(t)$.  When the switch is interior, its unique zero
crossing locates the switch.
We bracket interior sign changes on the adjoint output grid and refine
each root by Brent's method on the dense interpolant.  The terminal zero
$\psi(T)=0$ is excluded.
This provides a direct consistency check for the DPP based procedure above.

\paragraph{A well scaled form of the adjoint.}
Propagating $\lambda_I$ directly is poorly scaled, since
$\lambda_I(T^*)=-1/\dot I(T^*)$ is of order $\varepsilon^{-1}$.  Eliminating
$\lambda_S=-\psi/S$ from \Cref{lem:adjoint_components} and setting
$p:=\lambda_I I$ closes the system in the two quantities that actually carry
the sign information,
\begin{equation}\label{eq:psi_p_system}
\begin{aligned}
\dot\psi &= \beta_S\,S I\,\psi + p\,S\,G,\\
\dot p &= -\psi\,(\beta+\beta_I I)\,I + p\,I\,(\gamma_I-\beta_I S),
\end{aligned}
\end{equation}
with terminal data $\psi(T^*)=0$ and
$p(T^*)=\bigl(\gamma-\beta S\bigr)^{-1}$ evaluated at the exit point, both of
which are finite whenever the exit is transversal
(\Cref{lem:transversal_exit}).  This scaling removes the explicit factor
$\varepsilon^{-1}$ from the terminal value of $\lambda_I$.  Since $I>0$,
$\operatorname{sign}p=\operatorname{sign}\lambda_I$, so the dichotomy of
\Cref{lem:lambdaI_sign} can be read off directly from $p$.

\paragraph{Hamiltonian consistency check.}
The Hamiltonian provides a stringent numerical check of the Pontryagin
conditions.
Along each bang arc, the canonical equations imply that the autonomous
Hamiltonian is constant.  At an interior switch from $0$ to $1$, its jump is
$\psi(\tau)$ because the state and adjoint are continuous.  The terminal
condition gives $H=0$ on the last arc.  Hence, for an interior candidate,
\[
H\equiv0\ \text{on}\ [0,T]
\qquad\Longleftrightarrow\qquad
\psi(\tau)=0,
\]
so $\max_t|H|$ measures the residual in this condition.  For the endpoint
candidate $\tau=0$, there is no jump condition; the sign rule
\eqref{eq:bang_rule} provides the additional check.

\subsubsection{Method 3: derivative criterion}

Method~1 minimizes the function
\[
\phi(\tau)=\tau+W(\Phi^0_\tau(S_0,I_0)).
\]
When $\phi$ is sufficiently smooth, an alternative is to solve the
stationarity condition $\phi'(\tau)=0$ by scalar root finding.

\paragraph{A sensitivity identity for $\phi'(\tau)$.}
Let $z_\tau:=\Phi^0_\tau(z_0)$ with $z_0=(S_0,I_0)$ and denote by
$f_0(z):=f(z,0)$ the pre-switch vector field (i.e., the dynamics under
$u\equiv 0$).  By \Cref{lem:W_regularity}, $\phi$ is continuously
differentiable wherever the exit is transversal, and differentiating yields
\begin{equation}\label{eq:phi_prime}
\phi'(\tau)
=
1+\nabla W(z_\tau)\cdot \frac{d}{d\tau}\Phi^0_\tau(z_0)
=
1+\nabla W(z_\tau)\cdot f_0(z_\tau).
\end{equation}
An interior optimal switching time is therefore a root of
\begin{equation}\label{eq:g_root}
g(\tau):=1+\nabla W(\Phi^0_\tau(z_0))\cdot f_0(\Phi^0_\tau(z_0))=0.
\end{equation}
The roots of $g$ provide candidates for interior optimal switching times.

The following identity simplifies \eqref{eq:g_root}.

\begin{prop}
\label{prop:phi_prime_identity}
Assume \Cref{ass:main} and \Cref{ass:G_pos}.  Then
\begin{equation}\label{eq:phi_prime_identity}
\phi'(\tau)=S(\tau)\,\partial_S W\bigl(\Phi^0_\tau(z_0)\bigr)
\end{equation}
whenever $\Phi^0_\tau(z_0)\notin\mathcal G$.
In particular
\begin{equation}\label{eq:one_sided}
\partial_S W(z_0)<0\ \Longrightarrow\ \tau^*>0,
\qquad\text{equivalently}\qquad
\tau^*=0\ \Longrightarrow\ \partial_S W(z_0)\ge0 .
\end{equation}
\end{prop}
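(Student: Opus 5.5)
The plan is to compare the transport equation \eqref{eq:W_transport} for $W$ with formula \eqref{eq:phi_prime_rigorous} for $\phi'$. Fix $\tau$ with $z_\tau:=\Phi^0_\tau(z_0)\notin\mathcal G$. By \Cref{lem:W_regularity}, $W$ is $C^1$ near $z_\tau$ and
\[
1+\nabla W(z_\tau)\cdot f(z_\tau,1)=0,
\qquad
\phi'(\tau)=1+\nabla W(z_\tau)\cdot f(z_\tau,0).
\]
Subtracting the first identity from the second gives
\[
\phi'(\tau)=\nabla W(z_\tau)\cdot\bigl(f(z_\tau,0)-f(z_\tau,1)\bigr).
\]
Since $f$ is affine in $u$ with $\partial_u f=(-S,0)$, we have $f(z,0)-f(z,1)=(S,0)$. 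Hence $\phi'(\tau)=S(\tau)\,\partial_SW(z_\tau)$, where $S(\tau)$ is the first component of $z_\tau$. This is \eqref{eq:phi_prime_identity}.

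The only care needed is that the transport equation holds at the point $z_\tau$ itself, not merely along $\Phi^1$-trajectories. The equation \eqref{eq:W_transport} is obtained by differentiating $W(\Phi^1_h(z))=W(z)-h$ at $h=0$ for $z$ near $z_\tau$, using the $C^1$ local extension. It therefore holds pointwise at every $z\in\Omega\setminus\mathcal G$, in particular at $z_\tau$. The hypothesis $z_\tau\notin\mathcal G$ ensures $I>\varepsilon$ there, so the lemma applies.

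For the one-sided criterion \eqref{eq:one_sided}, suppose $\partial_SW(z_0)<0$. Since $I_0>\varepsilon$ and $S_0>0$, \eqref{eq:phi_prime_identity} at $\tau=0$ gives $\phi'(0)=S_0\,\partial_SW(z_0)<0$. Because $\phi$ is $C^1$ near $0$, there is a small $\tau>0$ with $\phi(\tau)<\phi(0)$. By \Cref{thm:switch_rep}, $V(z_0)\le\phi(\tau)<\phi(0)=W(z_0)$, so the control $u\equiv1$, which is the switch at $\tau=0$, is not optimal.

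It remains to conclude $\tau^*>0$, taking $\tau^*$ as in \Cref{cor:tau_star}. If $\tau^*=0$, then $0\in\mathcal T(z_0)$, which means $T^{u_0}(z_0)=W(z_0)=V(z_0)$. This contradicts $V(z_0)<W(z_0)$. The contrapositive form of \eqref{eq:one_sided} is the logically equivalent statement.

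The main (mild) obstacle is the one-sided derivative at $\tau=0$. Continuity of $\phi'$ up to $\tau=0$ follows because $z_0\notin\mathcal G$ and $\Phi^0_\tau(z_0)$ stays in $\Omega\setminus\mathcal G$ for small $\tau\ge0$. So the differentiability interval of \Cref{lem:W_regularity} includes a right neighbourhood of $0$, and the first-order Taylor step is justified.
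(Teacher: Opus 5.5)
Your proposal is correct and follows the paper's proof. You subtract the transport identity \eqref{eq:W_transport} from \eqref{eq:phi_prime_rigorous} using $f(\cdot,0)-f(\cdot,1)=(S,0)$, and you then use $\phi'(0)=S_0\,\partial_SW(z_0)<0$ to rule out $\tau=0$. Your extra step of showing $V(z_0)<W(z_0)$ and appealing to the definition of $\tau^*$ in \Cref{cor:tau_star} only spells out what the paper states in one line, namely that $\tau=0$ cannot be a minimizer.
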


\begin{proof}
The two vector fields differ only through the vaccination term,
\[
f(z,0)-f(z,1)=(S,0),
\qquad\text{i.e.}\qquad
f_0=f(\cdot,1)+(S,0).
\]
By \eqref{eq:W_transport}, $1+\nabla W\cdot f(\cdot,1)=0$.  Substituting into
\eqref{eq:phi_prime} gives
\[
\phi'(\tau)
=1+\nabla W\cdot f_0
=\underbrace{1+\nabla W\cdot f(\cdot,1)}_{=\,0}+S\,\partial_S W
=S\,\partial_S W .
\]
Since $S>0$ on $\Omega$, $\partial_SW(z_0)<0$ gives $\phi'(0)<0$, so $\phi$ is
strictly decreasing at $\tau=0$ and the boundary point $\tau=0$ cannot be a
minimizer; hence $\tau^*>0$.  The second implication in \eqref{eq:one_sided}
is its contrapositive.
\end{proof}

\begin{rem}
\label{rem:no_converse}
The converse of \eqref{eq:one_sided} is not asserted.
The identity \eqref{eq:phi_prime_identity} is local.
If $\partial_SW(z_0)\ge0$, the right derivative of $\phi$ at zero is
nonnegative, but this does not make zero a global minimizer.
Even unimodality or quasiconvexity alone does not make a zero derivative
sufficient for optimality: $h(t)=(t^2-1)^2$ on $t\ge0$ decreases to its
minimum at $t=1$ but satisfies $h'(0)=0$.
Accordingly, $\{\partial_SW=0\}$ in \Cref{subsec:switch_curve} is a
\emph{local switching indicator}, where a short delay has zero first order
effect.  Identifying it with the global switching boundary would require
additional conditions, which are not established here.  Global
minimization of $\phi$ determines the switch.
\end{rem}

\begin{rem}
Identity \eqref{eq:phi_prime_identity} shows that a short delay decreases the
objective to first order exactly when $\partial_SW<0$.  If
$\partial_SW>0$, it increases the objective to first order; when
$\partial_SW=0$, higher order information is needed.  Only the derivative of
$W$ with respect to $S$ is required in Method~3.  The same
derivative is used for the switching indicator in \Cref{subsec:switch_curve}.
This is consistent with the Hamilton--Jacobi feedback
$u^*=\mathbf 1_{\{V_S>0\}}$: on the pre-switch arc, $V$ and $W$ agree at
$\tau^*$, and in both formulations the decision is determined by the
derivative with respect to $S$.
\end{rem}

\paragraph{Numerical evaluation.}
We evaluate the gradient in coordinates $(S,L)$, where $L=\log I$.  Thus
\begin{equation*}
\nabla W\cdot f_0
=\dot S\,\partial_S W+\dot L\,\partial_LW,
\qquad
\dot L=\beta(S,I)S-\gamma(S,I).
\end{equation*}
The derivatives are approximated by centred finite differences in the
interior and second order one-sided differences near the boundary.  The
stencil direction and, when necessary, its step are chosen so that every
evaluation satisfies $S>0$, $I>\varepsilon$, and $S+I\le1$.
Each value of $W$ is computed by simulation under
$u\equiv1$ with the same event detection as in Method~1.  Logarithmic
coordinates give comparable steps over the range of infected fractions used
in the computations.

\paragraph{Root finding.}
We first scan $\tau$ on a coarse grid for sign changes of $g(\tau)$ and apply
Brent's method on each bracketing interval.  We compare $\phi$ at all detected
roots and at both endpoints of the search interval, since a root need not be
a global minimizer.

\begin{rem}
Method~3 is useful when the finite difference estimate of $\nabla W$ is
stable.  If it is sensitive to the time step or event detection, Method~1 can
be used instead.
\end{rem}

\subsubsection{Method 4: value matching}

When approximate value functions $\widehat V$ and $\widehat W$ are available,
the optimal switching time can be computed directly from the value matching
characterization in \Cref{cor:tau_star}.
Recall that the optimal switching time satisfies
\[
\tau^*
=
\min\Bigl\{
t\ge 0:\ 
V(\Phi^0_t(S_0,I_0))=W(\Phi^0_t(S_0,I_0))
\Bigr\}.
\]
Define the exact residual along the trajectory before the switch by
\begin{equation*}
r(t)
:=
V(\Phi^0_t(S_0,I_0))
-
W(\Phi^0_t(S_0,I_0)).
\end{equation*}
Applying \Cref{thm:switch_rep} at the state $z_t:=\Phi^0_t(z_0)$ and using the
semigroup property $\Phi^0_\sigma(z_t)=\Phi^0_{t+\sigma}(z_0)$ gives
$V(z_t)=-t+\inf_{s\ge t}\phi(s)$, while $W(z_t)=\phi(t)-t$; hence the exact
residual is
\begin{equation}\label{eq:residual_identity}
r(t)=\inf_{s\ge t}\phi(s)-\phi(t)\ \le\ 0 .
\end{equation}
It is strictly negative for $t<\tau^*$ and vanishes at $t=\tau^*$.  The
residual need not vanish identically on $[\tau^*,\infty)$:
by \eqref{eq:residual_identity} that would require $\phi$ to be nondecreasing
after $\tau^*$, which is a global shape property we have not established (cf.\
\Cref{rem:no_converse}).

For approximations $\widehat V$ and $\widehat W$, set
\[
\widehat r(t)
:=
\widehat V(\Phi^0_t(S_0,I_0))
-
\widehat W(\Phi^0_t(S_0,I_0)).
\]
With a tolerance $\delta>0$, Method~4 defines
\begin{equation*}
\widehat\tau_\delta:=\min\{t\ge0:\ \widehat r(t)\ge -\delta\}.
\end{equation*}

\paragraph{Implementation.}
In the reported computations, Method~4 is a value matching diagnostic, not
an independent approximation of $V$.  We use the minimum $m$ and minimizer
$\tau_m$ estimated by Method~1 and evaluate $m-\phi(t)$ for $t\le\tau_m$.
The refined minimizer is inserted into the time grid, so the search contains
a point satisfying the detection condition.  We locate the first grid
interval whose endpoints satisfy
$\widehat r(t_{n-1})+\delta<0\le\widehat r(t_n)+\delta$ and apply bisection
while preserving this bracket.  The returned endpoint is checked against
$\widehat r\ge-\delta$.  We use $\delta=10^{-9}$ and record the residual
alongside the switching time.  Methods~1 and~4 therefore share the
objective evaluations and are not independent validations of optimality.

\paragraph{Conditioning.}
Suppose the earliest minimizer is interior, $\phi$ is twice differentiable
at $\tau^*$, and the minimum is nondegenerate,
\begin{equation*}
\tau^*>0
\qquad\text{and}\qquad
\phi''(\tau^*)>0 .
\end{equation*}
Then for $t$ in a left neighbourhood of $\tau^*$,
\[
r(t)=-\bigl(\phi(t)-\phi(\tau^*)\bigr)
=-\tfrac12\phi''(\tau^*)\,(t-\tau^*)^2+o\bigl((t-\tau^*)^2\bigr),
\]
so the residual approaches zero quadratically from the left.  To state an
error estimate, suppose $\widehat r$ is continuous and
$\|\widehat r-r\|_{\infty}\le E$ on $[0,\tau^*]$, and choose $\delta>E$.
Then detection occurs no later than $\tau^*$, since $r(\tau^*)=0$.
On every compact interval strictly before $\tau^*$, the continuous residual
is bounded away from zero.  Thus, for sufficiently small $\delta+E$, the
first detection lies in the left neighbourhood where the expansion applies.
At that point $-r\le\delta+E$, and consequently
\[
0\le\tau^*-\widehat\tau_\delta
\le C\sqrt{\delta+E}.
\]
In particular, $E=O(\eta)$ and $\delta=O(\eta)$ with $\delta>E$ give
$O(\sqrt\eta)$ localization.  A bound on $E$ alone does not guarantee
detection.  A discretized search must also resolve the detection
neighbourhood; otherwise grid error can dominate.  Contact of order $2k$
gives exponent $1/(2k)$ under the corresponding assumptions.  A boundary
minimum is checked directly.  By contrast, a uniform error in $\phi'$
produces linear root sensitivity when $\phi''(\tau^*)>0$.

\begin{rem}\label{rem:learned_values}
The approximations $\widehat V$ and $\widehat W$ may, for example, be
represented by neural networks as in \citet{KimKimKim2025}.  Method~4
requires both functions.  Method~3 requires only $W$, which solves the linear
equation
$1+\nabla W\cdot f(\cdot,1)=0$, in which no minimization over $u$ appears.
Thus Method~3 does not require the nonsmooth Hamiltonian in
\eqref{eq:HJB_explicit}.

Concretely, let $z_\tau:=\Phi^0_\tau(z_0)$.  Since $z_\tau$ is the flow of the
ODE $\dot z=f_0(z)$, it satisfies $\frac{d}{d\tau}z_\tau=f_0(z_\tau)$.
If $\widehat V$ and $\widehat W$ are represented by differentiable neural
networks, then by the chain rule
\[
\frac{d}{d\tau}\bigl(\widehat V(z_\tau)-\widehat W(z_\tau)\bigr)
=
\nabla(\widehat V-\widehat W)(z_\tau)\cdot f_0(z_\tau).
\]
For Method~3, the corresponding approximation is
\[
\widehat g(\tau)
=
1+\nabla \widehat W(z_\tau)\cdot f_0(z_\tau).
\]
The gradients $\nabla\widehat V$ and $\nabla\widehat W$ are obtained by
automatic differentiation.
\end{rem}

\subsection{Validation of the single switch structure}
\label{subsec:validate_single_switch}
\label{subsec:bound_compare}

\Cref{tab:main_results} collects the computed switching and eradication
times and the a priori bound for all six parameter sets, and
\Cref{tab:pmp} reports the Pontryagin diagnostics.  The two regimes exhibit
the two alternatives in \Cref{thm:single_switch}.


\begin{table}[!htbp]
\centering
\setlength{\tabcolsep}{3pt}
\caption{Computed switching and eradication times, the eradication time
$W(z_0)$ under immediate full vaccination, the relative saving, and the
a priori bound of \Cref{thm:finite_time_strong} with the explicit extrema in
\Cref{subsec:scenario_extrema}.  Methods~1 and~3 agree to all digits shown.
Method~4 uses the minimum estimated by Method~1 and reports the first
detected point satisfying the value matching tolerance $\delta=10^{-9}$.
Its earlier switching time reflects this tolerance, as explained in
\Cref{subsec:compute_tau}.}
\label{tab:main_results}
\begin{tabular}{c c c c c c c c c c}
\hline
Regime & Sc. & $\tau^*$ (M1) & $\tau^*$ (M3) & $\tau^*$ (M4) & $T^*$ &
$W(z_0)$ & saving (\%) & $T_{\mathrm{ub}}$ & $T_{\mathrm{ub}}/T^*$\\ \hline
I & A & 0.00000 & 0.00000 & 0.00000 & 54.63250 & 54.63250 & 0.00 & 579.5 & 10.6 \\
I & B & 0.00000 & 0.00000 & 0.00000 & 72.96159 & 72.96159 & 0.00 & 5036.3 & 69.0 \\
I & C & 0.00000 & 0.00000 & 0.00000 & 91.79864 & 91.79864 & 0.00 & 754.6 & 8.2 \\
II & A & 0.68037 & 0.68037 & 0.68032 & 2.24961 & 2.94137 & 23.52 & 138.2 & 61.4 \\
II & B & 0.51578 & 0.51578 & 0.51575 & 2.07456 & 2.77648 & 25.28 & 653.9 & 315.2 \\
II & C & 0.57975 & 0.57975 & 0.57971 & 2.13389 & 2.75072 & 22.43 & 63.3 & 29.6 \\
\hline
\end{tabular}

\end{table}

\paragraph{Both branches of the dichotomy occur.}
In Regime~I the computed optimum is the corner $\tau^*=0$: the switching function has
no zero on $[0,T^*)$ and $\lambda_I>0$ throughout, corresponding to the first
branch of \Cref{lem:lambdaI_sign} and the case $\Theta\le0$ in the proof of
\Cref{thm:single_switch}.  In Regime~II the switch is interior,
$\tau^*$ between $0.52$ and $0.68$, $\psi$ has exactly one interior zero, and
$\lambda_I$ changes sign exactly once.  This is the second branch.  Both are
instances of the theorem.
\Cref{fig:trajectories} shows the Regime~II trajectories and control
profiles, and \Cref{fig:psi} the switching function together with
$p=\lambda_I I$, which has the same sign as $\lambda_I$.

\begin{figure}[!htbp]
\centering
\includegraphics[width=\textwidth]{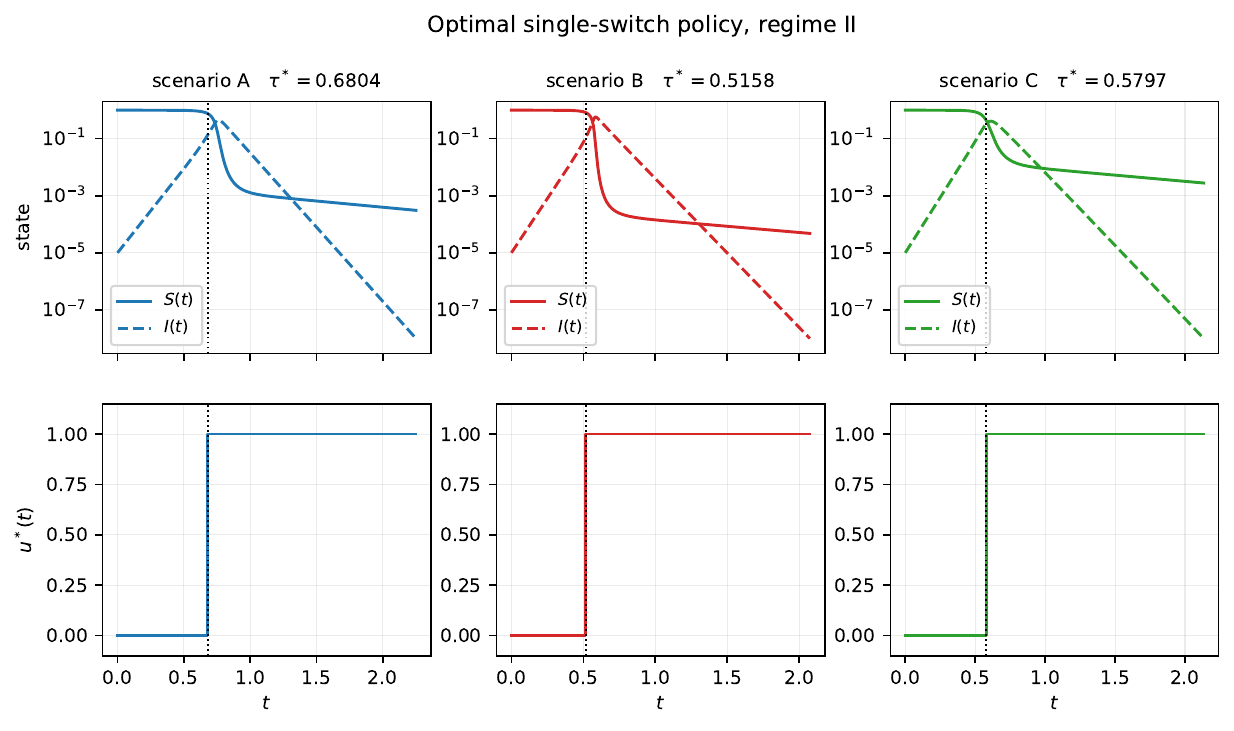}
\caption{Regime~II.  Top: the computed time-optimal trajectories $S(t)$ (solid) and $I(t)$
(dashed) on a logarithmic scale; the dotted vertical line marks the switching
time $\tau^*$.  Bottom: the corresponding control, a single jump from $0$
to~$1$.  Before the switch, the susceptible fraction decreases through
infection; vaccination is then applied at full intensity.}
\label{fig:trajectories}
\end{figure}

\paragraph{Hamiltonian consistency.}
At an interior switch, \Cref{subsubsec:method2} shows that continuity of the
Hamiltonian is equivalent to $\psi(\tau)=0$.  In every case
$\max_t|H|$ is of order $10^{-9}$ to $10^{-8}$ at the computed $\tau^*$ and is larger by four
to nine orders of magnitude at $\tau^*+0.3$ (\Cref{tab:pmp}).  The sign rule
also holds in all cases, including the endpoint cases in Regime~I.  These are
numerical checks of the Pontryagin conditions.


\begin{table}[!htbp]
\centering
\setlength{\tabcolsep}{3pt}
\setlength{\tabcolsep}{3pt}
\caption{Verification of the Pontryagin conditions (Method~2).  In Regime~I
the switching function has no interior zero and $\lambda_I>0$ throughout, which is the
first branch of the dichotomy of \Cref{lem:lambdaI_sign}; in Regime~II it has
exactly one interior zero and $\lambda_I$ changes sign exactly once, the second branch.
The terminal zero $\psi(T^*)=0$ is excluded from the counts.
The Hamiltonian residual is of order $10^{-9}$ to $10^{-8}$ at the computed $\tau^*$ and is
larger by four to nine orders of magnitude at $\tau^*+0.3$.  The sign rule
also holds in every case.}
\label{tab:pmp}
\begin{tabular}{c c c c c c c c}
\hline
Regime & Sc. & \makecell{interior zeros\\of $\psi$} & location &
\makecell{rule\\holds} & \makecell{$\max|H|$\\at $\tau^*$} &
\makecell{$\max|H|$\\at $\tau^*\!+\!0.3$} &
\makecell{sign changes\\of $\lambda_I$}\\ \hline
I & A & 0 & n/a & yes & 2.285e-09 & 5.319e+00 & 0 \\
I & B & 0 & n/a & yes & 2.822e-09 & 6.810e+00 & 0 \\
I & C & 0 & n/a & yes & 1.001e-08 & 9.670e+00 & 0 \\
II & A & 1 & 0.68037 & yes & 4.983e-09 & 3.819e-03 & 1 \\
II & B & 1 & 0.51578 & yes & 6.507e-09 & 2.064e-04 & 1 \\
II & C & 1 & 0.57975 & yes & 1.410e-08 & 2.941e-02 & 1 \\
\hline
\end{tabular}

\end{table}

\begin{figure}[!htbp]
\centering
\includegraphics[width=\textwidth]{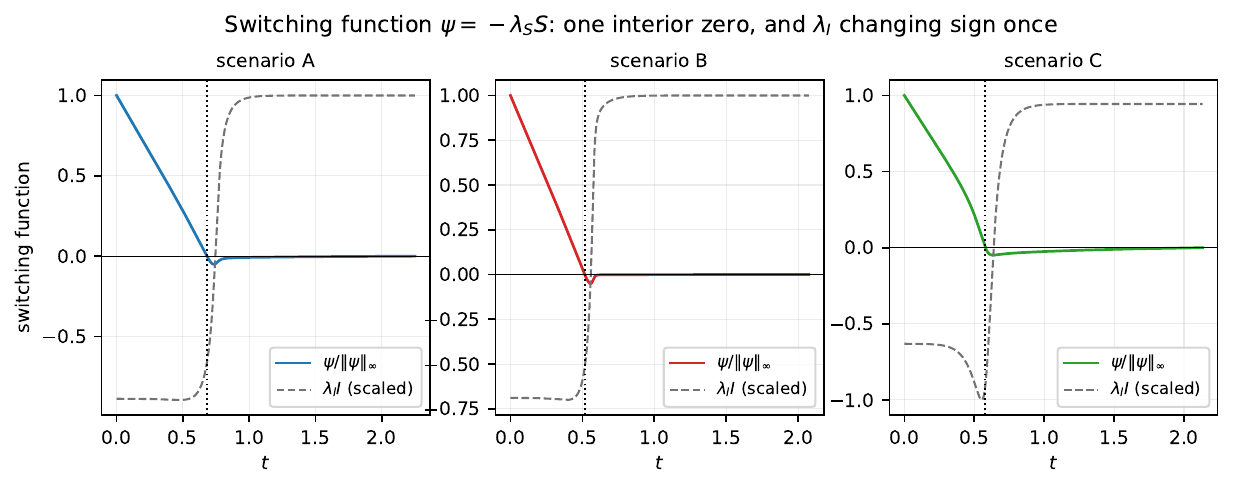}
\caption{Regime~II: the switching function $\psi=-\lambda_SS$, normalized,
together with the rescaled costate $p=\lambda_II$ of
\eqref{eq:psi_p_system}.  In each scenario $\psi$ has exactly one interior zero, at
the dotted line $t=\tau^*$, and $\lambda_I$ changes sign exactly once, from
negative to positive, as predicted by \Cref{lem:lambdaI_sign}.}
\label{fig:psi}
\end{figure}

\paragraph{Agreement and conditioning.}
Methods~1 and~3 agree to all digits reported.  Method~4 detects a point
slightly before the minimum, as expected for the tolerance condition
$\widehat r\ge-\delta$.  Every returned point satisfies that condition.
\Cref{tab:method4_noise} examines sensitivity by adding independent uniform
perturbations in $[-\eta,\eta]$ to the sampled residual and using
$\delta=2\eta$.  We use scenario~(A) of Regime~II with
$(S_0,I_0)=(0.99,10^{-5})$, $\varepsilon=10^{-8}$, and $2001$ points on
$[0,2]$, giving grid spacing $10^{-3}$.  Sixty realizations are used at
each noise level, with NumPy's default random generator and seed zero.
This bounded perturbation experiment illustrates the conditioning estimate;
it does not establish an asymptotic convergence rate.  Method~3 requires
one further caution:
$\phi'$ may vanish at several points, and a root is only a critical point.
The candidate roots and both endpoints must therefore be compared.
On nearly flat portions of the objective, finite difference noise can also
produce sign changes that do not reliably locate stationary points.


\begin{table}[!htbp]
\centering
\setlength{\tabcolsep}{3pt}
\caption{Value matching under independent uniform perturbations in
$[-\eta,\eta]$ of the sampled residual, with tolerance $\delta=2\eta$.
Scenario~(A) of Regime~II is used with $\varepsilon=10^{-8}$ and the nominal
initial state.  Each row reports the median over $60$ realizations on
$[0,2]$ with grid spacing $10^{-3}$ and random seed zero.
The last column describes sensitivity relative to the local
square root scale; finite-grid measurements do not establish an asymptotic
convergence rate.}
\label{tab:method4_noise}
\begin{tabular}{c c c c}
\hline
$\eta$ & $\sqrt\eta$ & median $|\tau_{\mathrm{M4}}-\tau^*|$ &
ratio to $\sqrt\eta$\\ \hline
1e-05 & 0.0032 & 0.005369 & 1.698 \\
1e-04 & 0.0100 & 0.020869 & 2.087 \\
1e-03 & 0.0316 & 0.064369 & 2.035 \\
1e-02 & 0.1000 & 0.187369 & 1.874 \\
1e-01 & 0.3162 & 0.470869 & 1.489 \\
\hline
\end{tabular}

\end{table}

\begin{figure}[!htbp]
\centering
\includegraphics[width=\textwidth]{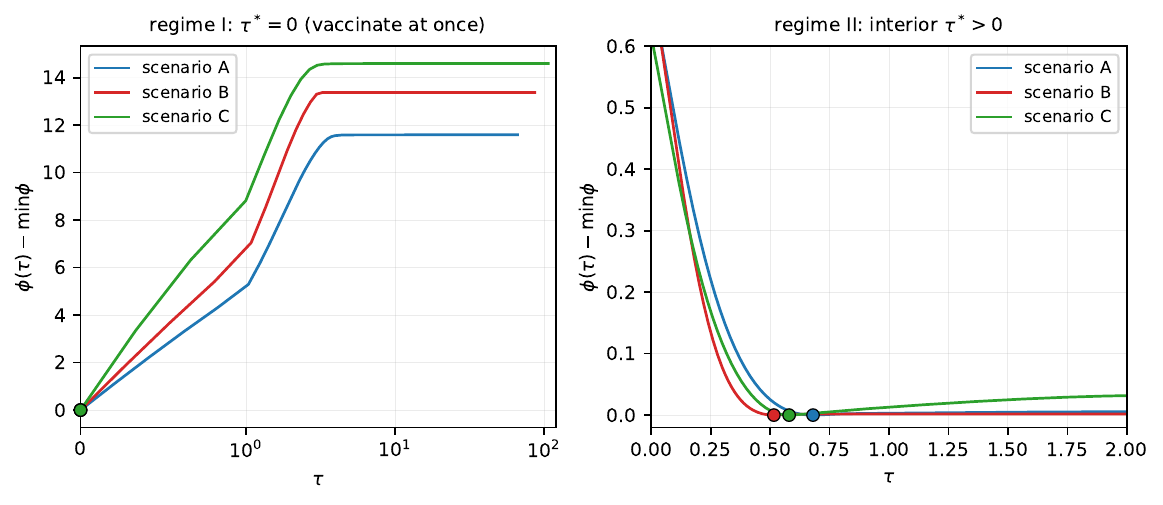}
\caption{The objective $\phi(\tau)=\tau+W(\Phi^0_\tau(z_0))$ after subtracting
its sampled minimum, with the optimizer marked.  Left: Regime~I, where the global
minimum occurs at the boundary $\tau^*=0$ (note the symmetric log horizontal
scale).  Right: Regime~II, where $\phi$ has a unique interior minimum over the
displayed interval.}
\label{fig:phi}
\end{figure}

\paragraph{Convergence.}
\Cref{tab:convergence} refines the $\tau$ grid by a factor of sixteen; the
switching time is stable to six decimal places.  The eradication threshold
mainly changes the duration of the terminal decay; its effect on the switching
time is small in the computations.


\begin{table}[!htbp]
\centering
\setlength{\tabcolsep}{3pt}
\caption{Convergence of $\tau^*$ (Regime~II) under refinement of the
$\tau$ grid used by Method~1.  The value is stable to six decimal places.}
\label{tab:convergence}
\begin{tabular}{c c c c c c }
\hline
Scenario & $n=61$ & $n=121$ & $n=241$ & $n=481$ & $n=961$\\ \hline
A & 0.680368976 & 0.680369015 & 0.680369031 & 0.680369028 & 0.680369074 \\
B & 0.515784153 & 0.515784136 & 0.515784139 & 0.515784166 & 0.515784155 \\
C & 0.579747701 & 0.579747717 & 0.579747639 & 0.579747670 & 0.579747663 \\
\hline
\end{tabular}

\end{table}

\paragraph{Upper bound versus observed eradication time.}

The last two columns of \Cref{tab:main_results} compare the a priori bound of
\Cref{thm:finite_time_strong}, evaluated with the explicit extrema of
\Cref{subsec:scenario_extrema}, against the computed optimum.  The bound
holds in every case, and is conservative by a factor between $8$ and $315$.
The variation is explained in part by scenario~(B), where
$\beta_{\max}=\beta_0e^{\alpha}$ substantially exceeds the transmission rate
because the exponential bound is attained only at $I=1$, a state the
trajectory never approaches.  The bound is intended as a guaranteed,
inexpensive simulation horizon rather than as an accurate quantitative
estimate.  The extrema are already exact on the triangle.  A tighter time
estimate would need a smaller reachable set or a sharper comparison that
retains more of the trajectory dependence.

\subsection{Improvement over immediate full vaccination}
\label{subsec:significance}
\label{subsec:switch_curve}

The single switch theorem characterizes the form of the optimal policy.  To
quantify the reduction in eradication time relative to immediate maximal
vaccination, we compare $V(z_0)$ with $W(z_0)$.

\paragraph{Frequency of a positive improvement.}
In Regime~I the computed optimum is immediate full vaccination, so the saving
is zero.  To examine a broader range, we swept the three families over
the grid
\[
\begin{gathered}
\beta_0\in\{8,16,32,64,128\},
\qquad
\gamma_0\in\{0.5,1.5,3,6,12\},\\
I_0\in\{3\!\cdot\!10^{-6},10^{-5},3\!\cdot\!10^{-5},10^{-4}\},
\end{gathered}
\]
with $S_0=0.99$ and $\varepsilon=10^{-8}$ throughout, and with the remaining
shape parameters taking the values $b\in\{0.5,1.5,4,10\}$ and
$c\in\{0.5,2\}$ in family~(A); $\sigma\in\{0.1,0.3,0.6,0.9\}$ and
$\alpha\in\{1,3\}$ in family~(B); $k_S\in\{0.1,0.25,0.4,0.5\}$ and
$k_I\in\{1,2\}$ in family~(C).  We fix $a=2.5$ in family~(A), $c=2$
in families~(B) and~(C), and $m=n=1$ in family~(C).
All $\SweepN$ parameter sets satisfy
$G>0$ on $\Omega$.  Each case is evaluated using a grid of $121$ switching
times followed by local refinement, and its parameters and outcome are
recorded.  The search ends at $\min\{T^{u\equiv0},60\}$.
In every case with $T^{u\equiv0}>60$, immediate full vaccination reaches
the target before time $60$, so a later switch cannot improve that feasible
policy.  Failed computations are recorded separately and are not assigned
zero saving.  The reported sweep contains no failed cases.
The saving is computed as zero for $\SweepZero\%$ of the sets, using a
numerical zero threshold of $10^{-6}$ percentage points.  Both the median
and the 75th percentile are zero at this precision.  These
proportions apply only to the prescribed grid and would change under a
different grid or weighting.  A saving above $1\%$ occurs in $\SweepAboveOne\%$ of the
parameter sets, and a saving above $10\%$ occurs in $\SweepAboveTen\%$.  Within a
single scenario, the distribution over initial states is similar but less
concentrated: \Cref{fig:saving_map} shows the relative saving over $\Omega$, and
roughly a third of the sampled admissible initial states have saving above
$1\%$, concentrated at large $S_0$ and small $I_0$.  This is a sample
fraction, not an area fraction in $(S,I)$: the grid is uniform in $S$ and
logarithmic in $I$.

\begin{figure}[!htbp]
\centering
\includegraphics[width=\textwidth]{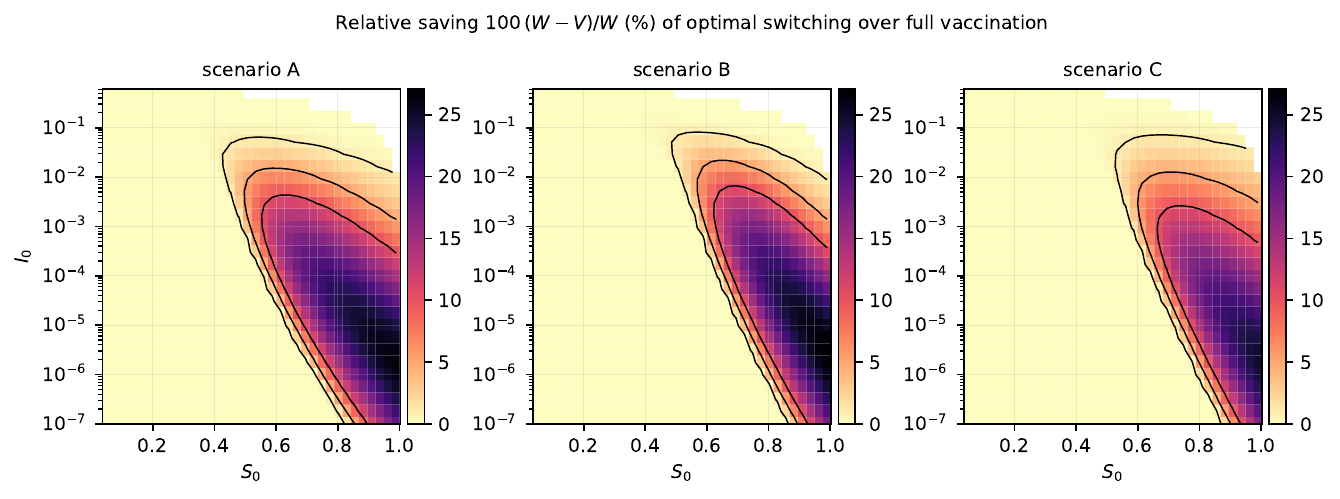}
\caption{Relative saving $100\,(W-V)/W$ of the computed policy over immediate
full vaccination, on the plotted subset of $\Omega$ in Regime~II.  Contours
are drawn at $1\%$, $5\%$ and $10\%$.  The saving is concentrated at large
$S_0$ and small $I_0$.}
\label{fig:saving_map}
\end{figure}

\paragraph{Magnitude of the improvement.}
The largest saving found anywhere in the admissible sweep is $\SweepMax\%$, and
the three Regime~II parameter sets, chosen near that maximum, realize
$23.5\%$, $25.3\%$ and $22.4\%$, respectively.  The largest absolute saving in that sweep was
$\SweepMaxAbs$ time units, which is less than one maximal vaccination time unit.

\paragraph{Dependence on the eradication threshold.}
For scenario~(A) of Regime~II with $(S_0,I_0)=(0.99,10^{-5})$, varying
$\varepsilon$ from $10^{-6}$ to $10^{-12}$ changes the absolute saving
$W(z_0)-V(z_0)$ by about a fifth, from $0.630$ to $0.755$, while the relative
saving falls from $25.2\%$ to $20.0\%$, because $W(z_0)$ itself grows like
$\log(1/\varepsilon)$.  A relative saving quoted for an eradication time
problem is therefore only meaningful alongside the threshold that produced
it, and we report both throughout.

\paragraph{Comparison with no vaccination.}
The reference policy matters.  In Regime~II the optimal policy reduces
eradication time relative to no vaccination by only $0.26\%$, $0.09\%$ and
$1.49\%$ in scenarios~(A), (B) and (C), respectively
(\Cref{tab:baselines}).  The larger reductions relative to immediate full
vaccination therefore reflect the objective's preference for rapid
susceptible depletion through infection.  They do not indicate a comparable
improvement over the duration of the uncontrolled epidemic.  The burden
measures below quantify this distinction.


\begin{table}[!htbp]
\centering
\setlength{\tabcolsep}{3pt}
\caption{Comparison with immediate full vaccination and no vaccination
in Regime~II.  The reduction depends strongly on the reference policy.}
\label{tab:baselines}
\begin{tabular}{c c c c c c}
\hline
& \multicolumn{3}{c}{eradication time} & \multicolumn{2}{c}{reduction (\%)}\\
Sc. & optimal & immediate & none & vs. immediate & vs. none\\ \hline
A & 2.249606 & 2.941370 & 2.255442 & 23.52 & 0.26 \\
B & 2.074559 & 2.776479 & 2.076363 & 25.28 & 0.09 \\
C & 2.133886 & 2.750724 & 2.166176 & 22.43 & 1.49 \\
\hline
\end{tabular}

\end{table}

\paragraph{Dependence on $R_{\mathrm{eff}}$.}
Sweeping $\beta_0$ at fixed
$\gamma_0$, so that the vaccination timescale is held fixed while
$R_{\mathrm{eff}}=\beta(z_0)S_0/\gamma(z_0)$ varies, the sampled interval with
saving above $0.5\%$ has a lower endpoint between approximately $1.7$ and
$2.0$ across the three families, and an upper endpoint between $3.3$ and
$5.0$.  These are sampled level crossings, not exact thresholds for zero
saving.  The
saving reaches a maximum between $22\%$ and $25\%$ for
$R_{\mathrm{eff}}$ between approximately $2.0$ and $2.5$; see
\Cref{fig:saving_R0}.  The three scenarios agree closely on the height of the
maximum and differ moderately in the width of the interval.  When
$R_{\mathrm{eff}}$ is close to one, the uncontrolled dynamics decrease the
susceptible fraction slowly.  For large $R_{\mathrm{eff}}$, a delay produces a
higher prevalence peak.  The improvement occurs at intermediate
$R_{\mathrm{eff}}$, where the susceptible fraction can decrease sufficiently
rapidly before vaccination.  By \Cref{prop:phi_prime_identity}, the local
criterion for this behavior is $\partial_SW<0$.

\begin{figure}[!htbp]
\centering
\includegraphics[width=\textwidth]{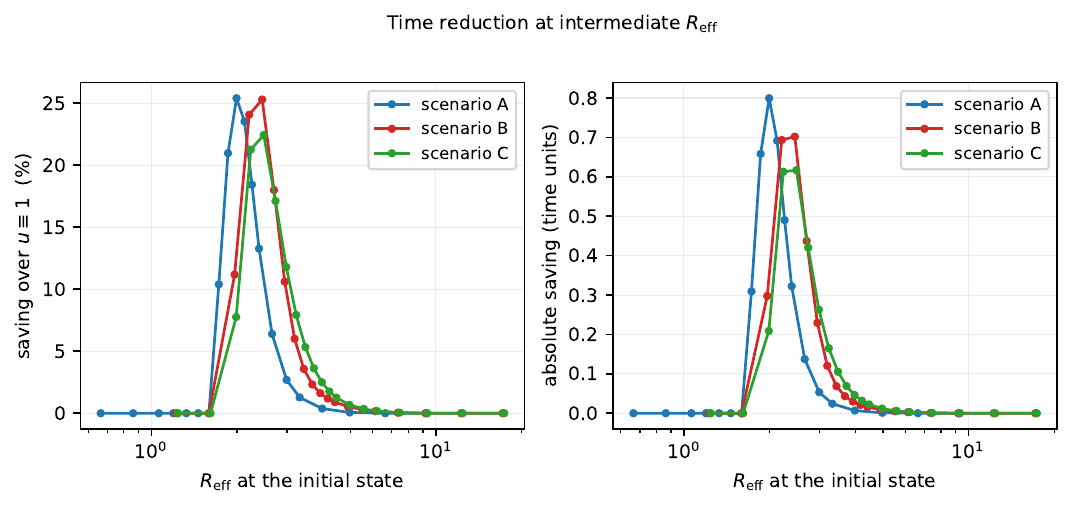}
\caption{Saving of the computed policy over immediate full vaccination as a
function of the reproduction number at the initial state, relative (left) and
absolute (right).  $R_{\mathrm{eff}}$ is varied by sweeping $\beta_0$ at fixed $\gamma_0$,
so the vaccination timescale is held fixed.  A positive saving occurs only
over an intermediate range of $R_{\mathrm{eff}}$.}
\label{fig:saving_R0}
\end{figure}

\paragraph{Effect on epidemiological burden.}
\label{par:burden}
The preceding comparison uses only the time required to reach
$I=\varepsilon$ and does not account for peak prevalence, cumulative
prevalence, or cumulative incidence.  Define
\[
C_{\rm inf}:=\int_0^T\beta(S,I)SI\,\de t,
\qquad
C_{\rm vac}:=\int_0^T uS\,\de t.
\]
These quantities satisfy
$S_0-S(T)=C_{\rm inf}+C_{\rm vac}$.
During the delay, susceptible individuals leave the susceptible compartment
through infection rather than vaccination.  \Cref{tab:burden} quantifies the
consequence for the three Regime~II
scenarios.  A reduction in eradication time between $22\%$ and $25\%$ raises the peak
prevalence $\max_tI$ by a factor of $12.5$ to $19.3$.  The peak is about
$3\%$ under immediate full vaccination and between $41\%$ and $57\%$ under
the computed time-optimal delay.  The cumulative prevalence
$\int_0^TI\,\de t$ increases
by a factor of $4.3$ to $4.6$, and cumulative incidence increases by a factor
of $4.5$ to $4.8$.  Under the delayed policy, $C_{\rm inf}$ is between $0.949$
and $0.962$, while $C_{\rm vac}$ is between $0.025$ and $0.040$.  Under
immediate full vaccination, the corresponding ranges are $0.198$ to $0.213$
and $0.746$ to $0.760$.  The reduction in eradication time is therefore
obtained mainly by susceptible depletion through infection.


\begin{table}[!htbp]
\centering
\setlength{\tabcolsep}{3pt}
\setlength{\tabcolsep}{3pt}
\caption{Eradication time and burden measures in Regime~II.  The increase
columns are ratios of the minimum time policy to immediate full vaccination.
Cumulative incidence is $\int_0^T\beta(S,I)SI\,\de t$, and the vaccinated
fraction is $\int_0^T uS\,\de t$.}
\label{tab:burden}
\begin{tabular}{c c c c c c c c c}
\hline
& \multicolumn{2}{c}{eradication time} & \multicolumn{2}{c}{increase} &
\multicolumn{2}{c}{cumulative incidence} &
\multicolumn{2}{c}{vaccinated fraction}\\
Sc. & minimum time & $u\equiv1$ & peak & $\int_0^T I\,\de t$ &
minimum time & $u\equiv1$ & minimum time & $u\equiv1$\\ \hline
A & 2.2496 & 2.9414 & 14.4$\times$ & 4.3$\times$ & 0.949 & 0.213 & 0.040 & 0.755 \\
B & 2.0746 & 2.7765 & 19.3$\times$ & 4.6$\times$ & 0.954 & 0.198 & 0.036 & 0.760 \\
C & 2.1339 & 2.7507 & 12.5$\times$ & 4.4$\times$ & 0.962 & 0.212 & 0.025 & 0.746 \\
\hline
\end{tabular}

\end{table}

\begin{rem}
\label{rem:not_a_recommendation}
Minimum eradication time is a well-posed criterion studied here and elsewhere
\citep{HyndEradication2021,BolzoniTimeOptimal2017,JangKim2025}.  It does not,
however, account for cumulative incidence, peak prevalence, hospitalization,
or mortality.  The delayed vaccination policies in this paper are therefore
solutions to the stated minimum time problem and should not be interpreted as
public health recommendations.  The values in \Cref{tab:burden} show that the
minimum time and burden objectives may differ substantially.  A formulation
that includes several objectives, such as peak or cumulative prevalence, is
outside the scope of the present study.
\end{rem}

\begin{rem}
\label{rem:significance_reading}
In the prescribed sweep, immediate maximal vaccination is the computed
optimum or lies within $1\%$ of it for most parameter sets.  A positive
delay occurs mainly at low initial prevalence, when vaccination is slow
relative to recovery and $R_{\mathrm{eff}}$ is in an intermediate range.  The
structural contribution of \Cref{thm:single_switch} is independent of the
magnitude of this improvement: it reduces the control problem to the
representation \eqref{eq:V_switch_rep} in one variable.  The parameter sweep
quantifies the subset of the prescribed grid in which the interior case
occurs.
\end{rem}

\paragraph{A local switching indicator.}

To relate the observed savings to the derivative criterion, we compute a
local switching indicator over the state space.
Direct evaluation of $\tau^*(S_0,I_0)$ would require a scalar
minimization at every initial state.  \Cref{prop:phi_prime_identity} instead
permits evaluation of the scalar field
\[
z\ \longmapsto\ 1+\nabla W(z)\cdot f_0(z)\ =\ S\,\partial_SW(z),
\]
which requires only a finite difference stencil for $W$ at each grid point.

\Cref{fig:switching_curve} displays this field for the three Regime~II
scenarios.  The region where delay is locally beneficial occurs at large $S$
and small to moderate $I$, and is separated from regions with small $S$ or
large $I$.  The initial state used throughout
\Cref{subsec:validate_single_switch} lies inside it.  The
qualitative shape is the same in all three scenarios.  This is consistent
with a dependence on the SIR dynamics rather than on the particular
nonlinearity chosen for $\beta$.  Comparing with
\Cref{fig:saving_map}, the region with $\partial_SW<0$ is consistent with the
region of positive saving observed in the numerical experiments.

\begin{figure}[!htbp]
\centering
\includegraphics[width=\textwidth]{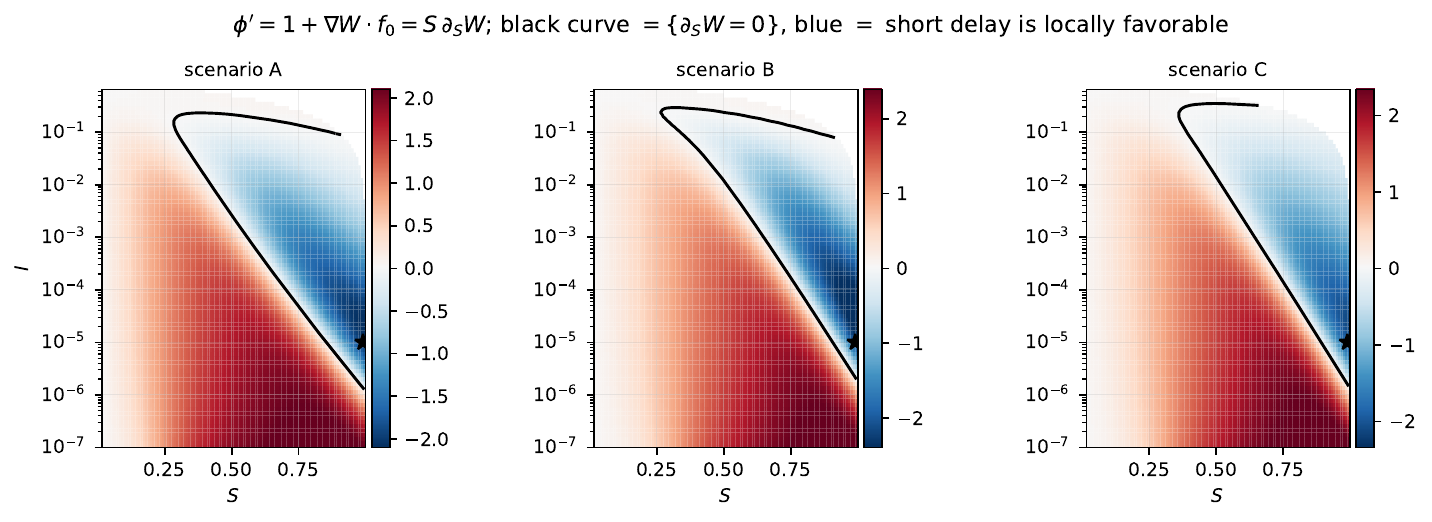}
\caption{The field $\phi'=S\,\partial_SW$ over $\Omega$ for the Regime~II
scenarios.  The black curve is its zero level set, which by
\Cref{prop:phi_prime_identity} separates the regions in which delay is locally
beneficial ($\partial_SW<0$, blue) or not locally beneficial
($\partial_SW>0$, red).  The star marks the initial state
$(S_0,I_0)=(0.99,10^{-5})$.}
\label{fig:switching_curve}
\end{figure}

\begin{rem}
At differentiability points of $V$, the Hamilton--Jacobi feedback reads
$u^*=\mathbf 1_{\{V_S>0\}}$.  The computed curve can therefore be compared
with $\{V_S=0\}$ in the region preceding a switch.  The computation itself
uses only $W$.
\end{rem}

\subsection{Numerical examples outside the structural regime}
\label{subsec:sharpness}

\Cref{ass:G_pos} is sufficient for the single switch conclusion, but the
proof of \Cref{thm:single_switch} does not show that it is necessary: the
crossing direction of $\psi$ at a zero depends on $\lambda_I G$,
and a trajectory that spends its initial phase in $\{G<0\}$ before entering
$\{G>0\}$ is not excluded a priori from switching more than once.  We
present numerical results exhibiting this behavior.

\paragraph{Design of the experiment.}
We use the violating parameter sets of \Cref{rem:sharpness_setup} together
with two comparison parameter sets satisfying $G>0$.  They use
$(\beta_0,\alpha,\sigma,\gamma_0,c)=(20,3,0.3,3,2)$ in scenario~(B) and
$(\beta_0,k_I,k_S,m,n,\gamma_0,c)=(20,2,0.2,1,1,3,2)$ in scenario~(C).
All four tests use $(S_0,I_0)=(0.99,10^{-5})$ and
$\varepsilon=10^{-8}$.  For each parameter set we compute two
quantities: the best computed policy within the single switch family
$\{u_\tau\}_{\tau\ge0}$, obtained by a fine grid search and local refinement,
and the best computed policy in the family that starts with $u=1$ and has two
switch times.  The latter is obtained by multistart local optimization and
contains the single switch family as boundary cases.  If it offers no
improvement, the single switch result remains consistent with the
computations.  Within numerical accuracy, a computed improvement demonstrates
that restriction to a single switch is suboptimal when \Cref{ass:G_pos} is
violated.

We repeat the two violating cases under six numerical configurations.  These
vary the ODE solver tolerances, the grid used for the single switch search,
the number of starting points in the multiple switch search, and the random
seed.  The complete results are summarized in
\Cref{tab:sharpness_robustness}.

\paragraph{Outcome.}
The results are collected in \Cref{tab:sharpness}.  Where $G>0$ holds, the
two switch family reduces to $0\to1$ through a boundary or cancelling pair of
switches, with eradication times agreeing to the precision reported in
\Cref{tab:sharpness}.
Where $G>0$ fails, the best policy found in the two switch family has the
profile
\[
1\ \longrightarrow\ 0\ \longrightarrow\ 1,
\]
with two switches.  Its eradication time is smaller than the computed minimum
over the single switch family by $2.16\%$ in scenario~(B) and $3.85\%$ in
scenario~(C).  The switching function computed along that single switch
trajectory also fails the sign condition in
\eqref{eq:bang_rule}.

\Cref{fig:sharpness} gives diagnostics along the computed two switch
profiles.  In both cases the two control switches agree with the two zeros of
the switching function, and its sign gives the computed control through
\eqref{eq:bang_rule}.  The trajectory starts in the region $G<0$ and later
enters the region $G>0$.


\begin{table}[!htbp]
\centering
\setlength{\tabcolsep}{3pt}
\setlength{\tabcolsep}{3pt}
\caption{Numerical comparison inside and outside the regime $G>0$.  Where
$G>0$ fails, the best computed profile $1\to0\to1$ improves on the computed
minimum over the single switch family, and the latter fails the Pontryagin
sign condition.  Where $G>0$ holds, the best computed profile is the single
switch $0\to1$ predicted by \Cref{thm:single_switch}.  The last column counts
interior zeros along the single switch candidate, not the multiple switch
profile.  The terminal zero is excluded.}
\label{tab:sharpness}
\setlength{\tabcolsep}{1.8pt}
\begin{tabular}{c c c c c c c c}
\hline
Parameters & \makecell{grid\\minimum of $G$} & \makecell{best single\\switch} &
\makecell{best multiple\\switches} & \makecell{improvement\\(\%)} &
\makecell{computed\\profile} & \makecell{\#\\switches} &
\makecell{zeros of $\psi$\\(single)}\\ \hline
B-viol & -5.84 & 5.020048 & 4.911549 & 2.1613 & $1\to0\to1$ & 2 & 2 \\
C-viol & -10.77 & 5.211797 & 5.011356 & 3.8459 & $1\to0\to1$ & 2 & 2 \\
B-ok & 10.39 & 6.781955 & 6.781955 & 0.0000 & $0\to1$ & 1 & 1 \\
C-ok & 12.02 & 6.768091 & 6.768091 & 0.0000 & $0\to1$ & 1 & 1 \\
\hline
\end{tabular}

\end{table}

\paragraph{Scope of the numerical conclusion.}
At the selected parameter values, the computed two switch control has a
smaller eradication time than the computed minimum over the single switch
family.  The latter was obtained from a fine grid search with local
refinement, and the observed gap exceeds the estimated discretization error
by several orders of magnitude.  Across all six configurations in
\Cref{tab:sharpness_robustness}, both gaps remain positive and the computed
profile remains $1\to0\to1$.  These results provide reproducible numerical
counterexamples to the unrestricted single switch conclusion.
As a separate integration check, we reintegrate the reported single and
two switch controls with DOP853 at relative tolerance $2\times10^{-12}$
and absolute tolerance $2\times10^{-14}$.  Both improvements persist.
This check compares fixed candidates and does not perform a new global
optimization.

This comparison establishes, within numerical accuracy, that restriction to
the single switch family is suboptimal at the selected parameter values.  It
does not determine the structure of the unrestricted optimum.  Outside
$\{G>0\}$ the proof of \Cref{lem:no_singular} no longer applies, so singular
arcs are not excluded.  We therefore do not assert that the true optimum has
exactly two switches or that $G>0$ is necessary for the single switch
property.


\begin{table}[!htbp]
\centering\footnotesize
\setlength{\tabcolsep}{3pt}
\caption{Robustness of the two examples outside $G>0$.  The best single and
multiple columns give the baseline values.  The minimum difference is over
six configurations, and maximum variation is the largest range of either
eradication time.  The relative solver tolerance takes the values $10^{-9}$,
$10^{-11}$ and $10^{-12}$, the single switch grid contains $250$, $500$ or
$1000$ points, and the multiple switch search uses $14$ or $30$ random starts
with seeds $0$, $1$ and $7$.  Every run returns $1\to0\to1$ and a positive
difference.}
\label{tab:sharpness_robustness}
\begin{tabular}{c c c c c c c c}
\hline
Parameters & \makecell{best\\single} & \makecell{best\\multiple} &
\makecell{minimum\\difference} & \makecell{maximum\\variation} &
\makecell{first\\switch} & \makecell{second\\switch} &
\makecell{reproduced\\runs} \\ \hline
B-viol & 5.02004826 & 4.91154936 & 0.10849890 & 2.8e-09 & 0.387541 & 1.091088 & 6/6 \\
C-viol & 5.21179702 & 5.01135650 & 0.20044050 & 2.0e-08 & 0.257312 & 1.300453 & 6/6 \\
\hline
\end{tabular}

\end{table}

\FloatBarrier

\begin{figure}[!htbp]
\centering
\includegraphics[width=0.9\textwidth]{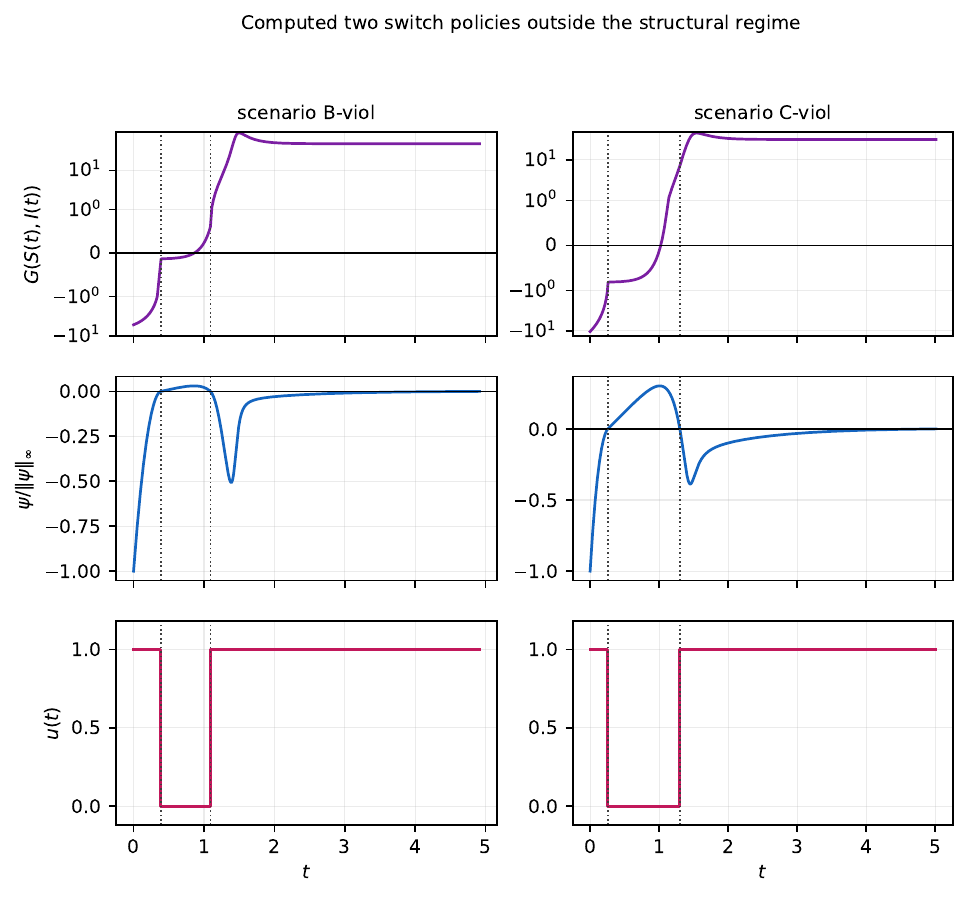}
\caption{Diagnostics along the best computed profile in the multiple switch
family for the two parameter sets that violate $G>0$.  From top to bottom,
the panels show $G(S(t),I(t))$, the normalized switching function, and the
control.  The dotted lines mark the computed switch times.  The profile is
$1\to0\to1$, and the switches coincide with changes in the sign of the
switching function.}
\label{fig:sharpness}
\end{figure}

\begin{rem}
The computed profile in the violating regime vaccinates immediately,
interrupts vaccination, and resumes it later.  Since
$G=\partial_S(\beta S-\gamma)<0$, decreasing the susceptible fraction
increases the instantaneous growth rate of the infected compartment at fixed
$I$.  After a short initial vaccination phase, the computed control
interrupts vaccination while the trajectory passes through the remainder of
$\{G<0\}$ and resumes it after entering $\{G>0\}$.  The initial vaccination
phase reflects the global nature of the time optimal problem.  The crossing
direction depends on the product $\lambda_I G$, not on $G$ alone.
\end{rem}

\FloatBarrier

\section{Conclusion}

We studied minimum time vaccination in an SIR model whose transmission and
recovery rates depend on state.  Under $\beta+I\partial_I\beta>0$ and
$G=\partial_S(\beta S-\gamma)>0$, every time-optimal control is bang-bang and
has at most one switch from $u=0$ to $u=1$.  The proof combines the sign of
$G$, the susceptible sensitivity of per capita infected growth, with the
sign structure of the infected costate.  The incidence condition allows
transmission coefficients that decrease with prevalence, including the
saturating incidence family in \Cref{rem:saturating_incidence}.
For selected parameter sets
violating $G>0$, a computed two switch control has a shorter eradication time
than the computed minimum over the $0\to1$ single switch family.  This provides a
reproducible numerical counterexample to the unrestricted single switch
conclusion.  It does not determine the number of switches in an unrestricted
optimum.

Existence of an optimal control and explicit eradication time bounds support
this analysis and provide finite simulation horizons.  Under the structural
criterion, the scalar switching time representation and the identity
$\phi'(\tau)=S\,\partial_SW$ extend the characterizations of
\citet{HyndEradication2021} to the present nonlinear setting.

In the prescribed parameter sweep, $\SweepAboveOne\%$ of the admissible parameter sets
yield an improvement over immediate full vaccination above $1\%$, and the
largest relative reduction in
eradication time is $\SweepMax\%$.  Immediate maximal vaccination is the computed
optimum for $\SweepZero\%$ of the sets.  In the three Regime~II examples, reductions
between $22\%$ and $25\%$ are accompanied by increases in peak prevalence by
factors between $12.5$ and $19.3$, and increases in cumulative incidence by
factors between $4.5$ and $4.8$.  The corresponding improvements over no
vaccination are about $0.09\%$ to $1.49\%$.  These comparisons quantify the
duration and burden tradeoff identified by \citet{BolzoniTimeOptimal2017}
for the selected nonlinear families.

Possible extensions include models with additional compartments, partial
observability, interventions that depend on time or face resource constraints,
and uncertain transmission and recovery rates.  Formulations with several
objectives could include peak or cumulative prevalence.

\section*{Statements and Declarations}

\subsection*{Funding}
Yeoneung Kim is supported by the National Research Foundation of Korea (NRF)
grants funded by the Korea government (MSIT) (RS-2023-00219980,
RS-2026-25490291) and Yonsei University.

\subsection*{Competing interests}
The authors declare that they have no competing interests.

\subsection*{Code and data availability}

The code and numerical data for
\Cref{tab:main_results,tab:pmp,tab:convergence,tab:sharpness,tab:sharpness_robustness,tab:burden,tab:method4_noise,tab:baselines}
and \Cref{fig:trajectories,fig:phi,fig:psi,fig:sharpness,fig:saving_map,fig:saving_R0,fig:switching_curve}
are available at \url{https://github.com/yeoneung/hetero_sir}
(commit \href{https://github.com/yeoneung/hetero_sir/tree/d0ce69009dfefccbdd81d8eb022e42e7d2f719ea}{\texttt{d0ce690}}).
They include the model and solvers,
all experiment configurations and random seeds, the full parameter sweep,
and scripts generating the figures, tables, and summary statistics.
Run metadata record software versions, numerical settings, source hashes,
and completion status.  The sweep records every attempted case.
Regression tests and independent integrator checks are included.

\subsection*{AI assistance}
The authors used ChatGPT to assist with manuscript preparation.
The authors reviewed and edited the content and take full
responsibility for the final manuscript.

\bibliographystyle{plainnat}
\bibliography{ref}

\end{document}